\newtheorem{theorem}{Theorem}[section]
\newtheorem{definition}[theorem]{Definition}
\newtheorem{proposition}[theorem]{Proposition}
\newtheorem{corollary}[theorem]{Corollary}
\newtheorem{remark}[theorem]{Remark}
\newtheorem{lemma}[theorem]{Lemma}
\def\book#1{\rm{#1}, }
\def\paper#1{\textit{#1}, }
\def\jour#1{\rm{#1}, }
\def\yr#1{({\rm{#1}) }}
\def\vol#1{\textbf{#1}}
\def\pages#1{\rm{#1}}
\def\publ#1{\rm{#1}, }
\def\by#1{{\rm{#1}, }}
\def\hN{{\widehat{N}}}
\def\hiota{{\widehat{\iota}}}
\def\hphi{{\widehat{\phi}}}
\def\tX{{{\widetilde{X}}}}
\def\Abl{{{\widetilde{w}}}}
\def\abl{{{{w}}}}
\def\tX{\widetilde{X}}
\def\Xrs{{X}}
\def\Rrs{{R}}
\def\Gammars{{\Gamma}}
\def\hR{\widehat{R}}
\def\fB{{\mathfrak{B}}}
\def\cK{{\mathcal{K}}}
\def\SL{\mathrm {SL}}
\def\wt{\mathrm {wt}}
\def\nuII{{\nu^{II}}}
\def\nuI{{\nu^{I}}}
\def\CC{{\mathbb C}}
\def\ZZ{{\mathbb Z}}
\def\NN{{\mathbb N}}
\def\QQ{{\mathbb Q}}
\def\WW{{{\mathcal W}}}
\def\cJ{\mathcal{J}}
\def\cO{{\mathcal{O}}}
\def\PP{{\mathbb P}}
\def\JJ{{\mathcal J}}
\def\cS{{\mathcal S}}
\def\nJJ{{{\mathcal J}^\circ}}
\def\nkappa{{{\kappa}^\circ}}
\def\nTheta{{{\Theta}^\circ}}
\def\nGamma{{{\Gamma}^\circ}}
\def\tv{{{\widetilde{v}}}}
\def\v{{{{v}}}}
\def\tw{{{\widetilde{w}}}}
\def\ker{\mathrm {ker }}
\def\Spec{\mathrm {Spec\ }}
\def\NMrs{{N}}
\def\NM{{N}}
\def\hr{{\widehat{r}{}}}
\def\hs{{\widehat{s}{}}}
\def\lms#1{{\lambda^{(s)}_{#1}}}
\def\lmr#1{{\lambda^{(r)}_{#1}}}
\def\lmsr#1{{\lambda^{(s+r)}_{#1}}}
\def\omegap#1{{\omega^{{\prime}}_{#1}}}
\def\omegapp#1{{\omega^{{\prime\prime}}_{#1}}}
\def\etap#1{{\eta^{{\prime}}_{#1}}}
\def\etapp#1{{\eta^{{\prime\prime}}_{#1}}}
\def\phiM#1{{\phi_{#1}}}
\def\phiH#1{{\widehat{\phi}_{#1}}}
\def\psig#1{{\psi_{#1}}}
\def\Psig#1{{\Psi_{#1}}}
\def\hpsig#1{{\widehat\psi_{#1}}}
\def\hPsig#1{{\widehat\Psi_{#1}}}
\def\muH#1{{\widehat\mu_{#1}}}
\def\alphaH{{\widehat\alpha}}
\def\hzeta{\widehat{\zeta}}
\theoremstyle{definition}
\theoremstyle{remark}
\numberwithin{equation}{section}
\begin{document}

% \title[short text for running head]{full title}
\title{The sigma function for trigonal cyclic curves}

%    Only \author and \address are required; other information is
%    optional.  Remove any unused author tags.

%    author one information
% \author[short version for running head]{name for top of paper}
\author{Jiryo Komeda}
\address{Department of Mathematics,\\
Center for Basic Education and Integrated Learning,\\
Kanagawa Institute of Technology,\\
Atsugi, 243-0292, JAPAN}
%\curraddr{}
\email{komeda@gen.kanagawa-it.ac.jp}

%    author two information
\author{Shigeki Matsutani}
\address{Industrial Mathematics,\\
National Institute of Technology, Sasebo College,\\
1-1 OkiShin-machi, Sasebo, Nagasaki, 857-1193, JAPAN\\}
%\curraddr{}
\email{smatsu@sasebo.ac.jp}
\thanks{The author (S.M.) thanks Professor Takeo Ohsawa for helpful comments
and discussions on the related topics.
J.K. and S.M. were supported by
the Grant-in-Aid for Scientific 
Research (C) of Japan Society for the Promotion
of Science, 
Grant No. 15K04830 and Grant No. 16K05187 respectively.}

\author{Emma Previato}
\address{Department of Mathematics and Statistics,\\
Boston University,\\
Boston, MA 02215-2411, U.S.A.}
%\curraddr{}
\email{ep@bu.edu}
%\thanks{}

%    \subjclass is required.
\subjclass[2010]{Primary14K25, 14H40. Secondary,
14H55, 14H50.}

\date{}

\dedicatory{}

%    "Communicated by" -- provide editor's name; required.
\commby{}

%    Abstract is required.
\begin{abstract}
A recent generalization of the ``Kleinian sigma function'' 
 involves the choice of a point $P$
of a Riemann surface $X$, namely a ``pointed curve'' 
$(X, P)$.
This paper concludes our explicit calculation 
of the sigma
function for curves
cyclic trigonal  at $P$.
We exhibit the Riemann constant for a 
Weierstrass semigroup at $P$
with minimal set of generators $\{ 3, 2r+s,2s+r\}$, 
 $r<s$, equivalently, non-symmetric; 
we construct a basis of $H^1(X, \mathbb{C})$ and
a fundamental 
2-differential on $X\times X$; we give the order of vanishing for
sigma on Wirtinger strata of the Jacobian of $X$, and a solution to the 
Jacobi inversion problem. 
\end{abstract}

\maketitle

\section{Introduction}\label{introduction}

The Weierstrass $\wp$ and $\sigma$ function, related by the identity 
$\wp(u)=-\dfrac{d^2}{du^2}\ln\sigma(u),$ are defined for elliptic curves.
Sigma in turn is related to Jacobi's theta functions \cite[20$\cdot$421]{ww}.
Theta functions were then defined by Riemann for any Abelian variety, while an
equivalent (in the sense of \cite{lang}, cf. Section \ref{transcendental}) function
was defined for hyperelliptic Jacobians by Klein \cite{klein1, K}. 
In fact, as pointed out in \cite{korotkin}, Klein defined it also
for all Jacobians of curves
of genus three \cite{klein2}, and in \textit{loc. cit.}, the authors
generalize it to any Jacobian by requiring a modular invariance
under the action of $\SL(2g, \mathbb{Z})$ (up to a root of unity).
A different approach, which we follow in this paper, originally 
proposed by Buchstaber,  Enolski\u{\i} and  Le\u{\i}kin \cite{bel}
and 
Eilbeck, Enolski\u{\i} and  Le\u{\i}kin \cite{EEL}
is based on Baker's results \cite{baker1907} that connect the (transcendental) sigma
function with the (algebraic) functions and differentials of the curve. In
this approach involves the choice of a point $P$ on the curve $X$, since
the relevant objects are written in terms of $H^0(X, \cO(*P))$. The
representation of the affine curve $X\backslash P$ is therefore also relevant,
and so is the Weierstrass semigroup (W-semigroup) at $P$.
Until recently, the only explicit formulas for sigma were produced for $(n,s)$
curves (cf. Section \ref{weierstrass}), which are plane affine curves that
have a smooth compactification by one point `at infinity', playing the role of
$P$, and 2-generator W-semigroup.
 Ayano \cite{ayano,ayano1} was 
able to follow this approach and construct sigma for
``telescopic'' (cf. Section \ref{weierstrass}) W-semigroups; at the same time,
the first two authors gave a construction for a new case, neither of $(n,s)$
nor of telescopic type \cite{komedamatsutani},   pursued in
\cite{KMP13, KMP16}. 
Those papers covered curves cyclic at $P$ with 
  W-semigroup that has minimal set of generators $ \{ 3, 2r+s,2s+r
\}$, where $r,s$ are
natural numbers with $r<s$.
Slow but steady (as in the ancient Zwahili
proverb ``Pole pole ndio mwendo''),  in this paper we complete the
explicit results for all curves trigonal cyclic at $P$ (in the sense of
\cite{accola}, cf. Subsection \ref{weierstrass}).
Our methods do not cover the non-cyclic case; for example,
Klein's quartic $X^3Y+Y^3Z+Z^3X$ in
projective coordinates $[X,Y,Z]$, the genus-three curve with maximum number of
automorphisms,  has W-semigroup $ \langle 3, 5,7
\rangle$ at $P=[0,0,1]$
but is not cyclic since all the
automorphisms of order three are conjugate and the quotient curves have genus
one. 
Indeed, our strategy,  introduced by Komeda and Matsutani
\cite{komedamatsutani}, 
is to piece together local coordinates at $P$ by using a suitable set of 
singular $(n,s)$ curves which are images of $X$: the images of
their affine rings generate the affine ring of $X\backslash P$.
These singular curves can be represented as monomial curves 
(cf. Subsection \ref{monomial}); in fact, singular ${Z}_3$ curves in the
  sense of \cite{farkaszemel}
because of the  action of 
$\mathbb{Z}/3\mathbb{Z}$. On the plus side, the value of our approach is 
that it works for any $k$-gonal cyclic cover of $\mathbb{P}^1$; although
we only
 work out the trigonal case because the general $k$ would necessitate
very large formulas, we provide general statements to the fullest extent
possible. We clarify that
 the words ``trigonal'' and $k$-gonal are used  only as an indication
 of the fact that the pointed curve $(X,\infty )$ has W-semigroup of type
 three, five, respectively; a different convention requires that $k$-gonal
 curves not be $j$-gonal for $j < k$ (as in trigonal curves, which by
 definition are not hyperelliptic);  this cannot be guaranteed in our
 examples, as noticed in  \cite[Ch. V, \S 70]{baker1897}.
Cyclic covers of $\mathbb{P}^1$ 
have been connected with vanishing properties of the theta function (for
recent work cf. \cite{accola, farkaszemel}), since there is an
induced action on the Jacobian.

We emphasize that the main effort in the theory is directed toward
explicitness. In the \cite{bel, ble, EEL} approach, 
the differentials that satisfy
the generalized Legendre relations (cf. Subsection \ref{sigma}) are built out
of algebraic functions on the curve, and this is where the cyclic action 
on  our local coordinates at $P$ is crucial.
The two versions of the Kleinian sigma function (modular vs. algebraic) have
not been fully compared yet; in fact, much remains to be done before the
algebraic approach can be exploited to its full extent in applications to
physics and dynamics.  This is important for two reasons: PDEs
and integrable systems can  be solved explicitly--for example, our formulas
for the order of vanishing of sigma on strata of the Jacobian give information
on the  qualitative behavior of
the solutions; and the numerical properties of the Weierstrass
points of the curve come into play, which opens vistas toward
applications to open questions in Weierstrass-semigroup theory
(cf. \cite[Section 2]{KMP13}).

We use the word ``curve'' for a compact Riemann surface: on occasion, we use a
singular representation of the curve; since there is a unique smooth curve
with the same field of meromorphic function, this should not cause confusion.
By ``natural number'' we mean a positive integer, excluding zero
from $\mathbb{N}$.
The contents  of this paper are organized as follows:
in Section \ref{weierstrass}, we collect definitions, properties, and
representation-theoretic interpretations 
of W-semigroups, cyclic covers
of $\mathbb{P}^1$, monomial curves.
Section \ref{cyclic}
 contains our main constructions  for the general 
trigonal cyclic case. %,  applied to the construction of a suitable basis of
		      %the first cohomology group
 %of the constant sheaf of the curve, and of the 
 %``normalized fundamental form'' \cite{bel}.
In Section \ref{transcendental},
 we introduce the transcendental aspect %objects (Jacobi variety, 
 %Abelian coordinates, Abelian vector fields, Riemann
 %theta function with characteristics, theta divisor, Riemann vector, 
 %and Wirtinger strata) and then 
 and the sigma function. Our explicit expressions
for the trigonal cyclic curves use classical and new 
theorems for the sigma function of
$(n,s)$ curves. In Section 
\ref{results}, we collect the
vanishing theorems for sigma 
and the solution to the Jacobi inversion problem. In the Appendix 
 we provide technical proofs for constructing our basis of
 $H^1(X,\mathbb{C})$, the first cohomology group
 of the constant sheaf of the curve.

%{\bf{
%Acknowledgments:}}
%The author (S.M.) thanks Professor Takeo Ohsawa for helpful comments
%and discussions on the related topics.
%J.K. and S.M. were supported by
%the Grant-in-Aid for Scientific 
%Research (C) of Japan Society for the Promotion
%of Science, 
%Grant No. 15K04830 and Grant No. 16K05187 respectively.

\section{Numerical and Weierstrass semigroups}\label{weierstrass}
We set up the notation for  W-semigroups,
and recall the results we use.

A pointed curve is a pair $(X,P)$, with $P$ a point of a curve $X$;
the W-semigroup for $X$ at $P$, which we denote by $H(X,P)$, 
 is the complement in $\mathbb{N}_0=\mathbb{N}\cup\{ 0\}$ 
of the Weierstrass gap sequence (W-gaps) $L$, namely  the set of natural
numbers 
$\{\ell_0 < \ell_1 < \cdots < \ell_{g-1} \}$ such that 
$H^0(\cK_X-\ell_iP)\ne H^0(\cK_X-(\ell_i-1)P)$, for
$\cK_X$ a  
representative of the canonical divisor (we identify
divisors and the corresponding sheaves; $\cK_X$ corresponds to 
the sheaf of holomorphic differentials $ \Omega_{X}$).
By the Riemann-Roch theorem, $H(X,P)$
 is a numerical semigroup.
In general, a numerical semigroup $H$
has a unique (finite) minimal set of generators,  $M(H)$, say,
and the cardinality $g$  of $\mathbb{N}_0 \backslash H$ is called
the genus of $H$
(for terminology, history and references cf. \cite{a-b}.)
 The Schubert index of the set $L$ is
\begin{equation}
\alpha(L) :=\{\alpha_0(L), \alpha_1(L), \ldots, \alpha_{g-1}(L)\},
\label{eq:alphaL}
\end{equation}
where 
$\alpha_i(L) := \ell_i - i -1$ \cite{EH}.
 For the smallest element $a_{\mathrm{min}}(H)$ in $M(H)$
we call a semigroup $H$ an $a_{\mathrm{min}}(H)$-semigroup,
therefore $H(X,P)$ for a trigonal cyclic pointed curve $(X,P)$ is a
3-semigroup.  

If we let the row lengths be
$\Lambda_i = \alpha_{g-i}+1$, $i\le g,$ then 
$\Lambda:
=(\Lambda_1, ..., \Lambda_g)$ is the Young diagram of the semigroup.
A W-semigroup is called symmetric when $2g-1$ occurs in the gap sequence;
thus, $H(X,P)$ is symmetric if and only if its Young diagram is symmetric,
i.e. invariant under reflection across the main diagonal.

{Telescopic numerical semigroups} were introduced in  Information Theory
(Goppa codes), because the parameters of the 
corresponding code, hence also the answer to the Diophantine Frobenius
Problem \cite[Ch. 7]{ramalfons},  can be computed efficiently.
They are defined to have generators 
$\{\alpha_0(L), \alpha_1(L), \ldots, \alpha_{g-1}(L)\}$
 that, up to reordering,
are a telescopic sequence:

\begin{definition}\label{tele1}
The sequence $\{ w_1,..., w_s\}$ is called telescopic if 
$\mathrm{gcd}(w_1,..., w_s)=1$ and
$w_i/d_i\in S_{i-1}$ $(i=2, ..., s)$,
 where $d_i=\mathrm{gcd} (w_1,...,w_i)$ and $S_i$ is the
semigroup generated by  $\{ w_1/d_i,..., w_i/d_i\}$, $1\le i\le s$.
\end{definition}

Telescopic semigroups are symmetric \cite[Lemma 7.1.7]{ramalfons},
but not \textit{vice versa}, as the generators $\alpha (L)=\{ 6,13,14,15,16\}$
show, since the Young diagram is symmetric \cite{KMP13} but the sequence is
not telescopic even after reordering.

Miura analyzed, in particular,
an affine model for the curves with W-semigroup at a point which is
telescopic, for applications to coding theory
\cite{Miu}.  His results appeared only  in Japanese,
and for that reason Ayano included the complete proofs in his paper
\cite{ayano} where he
 constructed the sigma function; he used sigma to obtain
 Jacobi inversion formulas and vanishing theorems over Wirtinger
 strata \cite{ayano1}. 

\subsection{The monomial Ring $B_H$}\label{monomial}\label{monomial}
A numerical semigroup $H$ has an associated ring
$B_H \simeq k[Z_1, Z_2,..., Z_\ell
]/ \ker\varphi$,  
where for a minimal set of generators 
$M=\{ m_1,...,m_\ell\}$, $\varphi$ is the
 epimorphism from the polynomial ring $\mathbb{C}[Z] := \mathbb{C}[Z_1,
   Z_2,..., Z_\ell]$  to
$B_H:=\CC[t^{m_1}, t^{m_2}, \ldots, t^{m_\ell}]$, given by
$ Z_a\mapsto t^{m_a}; 
$
Herzog shows that the kernel is a monomial ideal \cite[Prop. 1.4]{herzog}.

There is an action of $\mathbb{C}^\ast$
 on the monomial ring $B_{H}$, given as
$Z_j\mapsto a^{m_j} Z_j$ for $a\in \mathbb{C}^\ast$. Accordingly,
we say that $Z_a$ has weight $a$.

\subsection{Weierstrass normal form}
We now consider the ``Weierstrass normal form'' (W-normal form): this 
gives a (possibly singular) model for any
pointed  curve in affine space of the same dimension as a minimal
set of generators of the W-semigroup. 
Baker \cite[Ch. V, \S\S 60-79]{baker1897} 
gives a complete review, proof and examples of the theory 
(he calls it ``Weierstrass canonical form''), which is a
generalization of Weierstrass' equation for elliptic curves.
We refer to Kato \cite{kato}, who 
also produces this representation, with proof that it exists.

\begin{proposition}
A pointed curve $(X,\infty )$ 
with W-semigroup $H(X,\infty)$
for which  $a_{\mathrm{min}}(H(X,\infty))=m$
 can be viewed as an $m:1$  cover of $\mathbb{P}^1$; we denote by
$x$  an affine coordinate on $\mathbb{P}^1$ such that
the point $\infty$ on $X$ is mapped to $x=\infty$;
we let
$m_i :=\min\{h \in H(X,\infty)\setminus \{0\}\ 
|\ i \equiv h \ \mbox{ mod } m\}$,
$i =0, 1, 2, \ldots, m-1$,
$m_0 = m$ and  $n=\min\{m_j \ | \ (m, j) = 1\}$. Then,
$(X,\infty)$
 is defined by an irreducible equation,
\begin{equation}
f(x,y)= 0,
\label{eq:WNF1a}
\end{equation}
for a polynomial $f\in \CC[x,y_n]$ of type,
\begin{equation}
f(x,y):=y^m + A_{1}(x) y^{m-1}
+ A_{2}(x) y^{m-2}
+\cdots + A_{m-1}(x) y
 + A_{m}(x),
\label{eq:WNF1b}
\end{equation}
where the $A_i(x)$'s are polynomials in $x$,
of degree $\leq jn/m$, with equality being attained only for
$j=m$:
$$
A_i = \sum_{j=0}^{\lfloor i n/m\rfloor} \lambda_{i, j} x^j,
\  \lambda_{i,j} \in \CC,\ \lambda_{m,m}=1.
$$
\end{proposition}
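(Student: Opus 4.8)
The plan is to reconstruct the Weierstrass normal form by the classical route of Baker \cite[Ch.~V]{baker1897} and Kato \cite{kato}, working in the function field $\CC(X)$ with the valuation $\ord_\infty$ at $\infty$. Since $m=a_{\mathrm{min}}(H(X,\infty))\in H(X,\infty)$, there is (by Riemann--Roch) a function $x\in H^0(X,\cO(*\infty))$ whose only pole is at $\infty$, of exact order $m$; and since $n=\min\{m_j\mid (m,j)=1\}$ equals $m_{j^\ast}$ for some $j^\ast$ coprime to $m$, and each $m_j$ is by definition an element of $H(X,\infty)$, there is likewise $y=y_n\in H^0(X,\cO(*\infty))$ with a single pole at $\infty$, of order $n$. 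The polar divisor of $x$ is $m\cdot\infty$, of degree $[\CC(X):\CC(x)]=m$, so $x\colon X\to\PP^1$ has $\infty$ as the unique point over $x=\infty$, totally ramified of index $m$; in particular $\ord_\infty(x)=-m$, $\ord_\infty(y)=-n$, and $\gcd(m,n)=1$ since $n=m_{j^\ast}\equiv j^\ast\bmod m$ with $(j^\ast,m)=1$.

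Next I would show $\CC(X)=\CC(x,y)$ with $[\CC(x,y):\CC(x)]=m$. Let $\mathfrak p$ be the place of $\CC(x,y)$ below $\infty$, and $e$ the ramification index of $\infty$ over $\mathfrak p$; then $\ord_{\mathfrak p}(x)=\ord_\infty(x)/e=-m/e$ and $\ord_{\mathfrak p}(y)=-n/e$ are integers, so $e\mid\gcd(m,n)=1$, i.e.\ $\ord_{\mathfrak p}(x)=-m$. Hence $\mathfrak p$ is ramified of index $m$ over $x=\infty$ in $\CC(x,y)/\CC(x)$, which forces $[\CC(x,y):\CC(x)]\ge m$; combined with $\CC(x)\subseteq\CC(x,y)\subseteq\CC(X)$ and $[\CC(X):\CC(x)]=m$ this yields $\CC(X)=\CC(x,y)$ and $[\CC(x,y):\CC(x)]=m$. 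Therefore the minimal polynomial of $y$ over $\CC(x)$ is monic of degree $m$, say $f(x,Y)=Y^m+A_1(x)Y^{m-1}+\cdots+A_m(x)$ with $A_i\in\CC(x)$; it is irreducible over $\CC(x)$, hence (being monic, so primitive) irreducible in $\CC[x,Y]$ by Gauss's lemma, and since $\CC(X)$ is the fraction field of $\CC[x,y]/(f)$, the equation $f(x,y)=0$ is a (possibly singular) plane model of $(X,\infty)$ realizing it as an $m:1$ cover of $\PP^1$ via $x$.

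It remains to read off the shape of the $A_i$. The ring of functions regular on $X\setminus\{\infty\}$ is finite over $\CC[x]$ and contains $y$, so $y$ is integral over $\CC[x]$; as $\CC[x]$ is integrally closed, the coefficients $A_i$ of the minimal polynomial of $y$ lie in $\CC[x]$. For the degree bounds, note that $\ord_\infty$ restricts on $\CC(x)$ to $m$ times the valuation at $x=\infty$, so $v:=\tfrac1m\ord_\infty$ extends the latter and $v(x)=-1$, $v(P)=-\deg P$ for $P\in\CC[x]$; since $\CC(x,y)/\CC(x)$ has a single place above $x=\infty$, all $m$ conjugates $y_1,\dots,y_m$ of $y$ satisfy $v(y_l)=v(y)=-n/m$. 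From $A_i=(-1)^i e_i(y_1,\dots,y_m)$ one gets $v(A_i)\ge -in/m$, while multiplicativity of $v$ gives $v(A_m)=\sum_l v(y_l)=-n$ with no cancellation. Hence $\deg A_i\le in/m$, with equality exactly at $i=m$; and as $\deg A_i\in\ZZ$ while $\gcd(m,n)=1$ makes $in/m$ non-integral for $0<i<m$, in fact $\deg A_i\le\lfloor in/m\rfloor$ throughout and $\deg A_m=n$, giving $A_i=\sum_{j=0}^{\lfloor in/m\rfloor}\lambda_{i,j}x^j$. Finally, replacing $y$ by $cy$ sends $A_i$ to $c^iA_i$, hence the leading coefficient $\lambda_{m,m}$ of $A_m$ to $c^m\lambda_{m,m}$, so a suitable $c\in\CC^\ast$ normalizes $\lambda_{m,m}=1$ without affecting anything above.

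The step I expect to be the main obstacle is the degree estimate together with its sharpness: everything there hinges on $\infty$ being totally ramified over $x=\infty$ and on $\gcd(m,n)=1$, which is precisely what makes the extension of $\ord_\infty$ to the conjugates of $y$ unambiguous and forces every $y_l$ to have valuation $-n/m$; granting that, the bound $\deg A_i\le in/m$ and the exact value $\deg A_m=n$ are then purely formal consequences of additivity and multiplicativity of valuations. The remaining work---verifying that $n$ as defined really is a non-gap coprime to $m$, and translating freely between pole orders at $\infty$, residues modulo $m$, and degrees in $x$---is routine but must be handled with a little care.
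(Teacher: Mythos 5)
Your proof is correct and is essentially the argument the paper relies on: the paper gives no proof of its own for this proposition, deferring to Baker and Kato, and your reconstruction---total ramification of $x$ at $\infty$, coprimality of $m$ and $n$ forcing $\CC(X)=\CC(x,y)$, integrality of $y$ over $\CC[x]$, and the valuation bound on the elementary symmetric functions of the conjugates of $y$ yielding $\deg A_i\le in/m$ with equality only at $i=m$---is exactly the classical one. One minor point, inherited from the paper's own statement rather than from your argument: since $\deg A_m=n$, the normalized leading coefficient should be denoted $\lambda_{m,n}$ rather than $\lambda_{m,m}$.
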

The affine curve Spec $\CC[x,y]/f(x,y)$ may be
singular and we denote by $X$ its unique normalization.
Kato's proof shows that the
affine ring of the curve $X\backslash\infty$ is generated by functions
$y_{m_i}$ whose only pole is
 $\infty$ with order  the $g$ non-gaps $m_i$:
let $I_m :=\{m_1, m_2, \ldots, m_{m-1}\}\setminus \{m_{i_0}\}$,
where $i_0$ is such that $n = m_{i_0}$, take 
 $x=y_{m}$ and $y=y_{n}$; then the affine ring 
of $X\backslash\infty$ can be presented as
$\CC[x,y_n, y_{m_3}, \ldots, y_{m_\ell}]$
for $i_j \in M_g$, where
$M_g :=\{ m_1, m_2, \ldots, m_\ell\} \subset \NN^{\ell}$ 
with $(m_i, m_j) = 1$ for $i\neq j$, $m_1=m, m_2=n$, is a minimal set of
generators for $H(X,\infty)$.

The curve in W-normal form, namely (\ref{eq:WNF1b}),
 admits a local $\mathbb{Z}/m\mathbb{Z}$-action at $\infty$,
 %where $\mathbb{C}^\ast$ is the multiplicative group of the complex numbers,
 in the following sense.
Sending $Z_1$ to $1/x$ and $Z_{i}$ to $1/y_{m_i}$ 
for $m_i \in \alpha (L), \ i>1$, we have the same kernel 
as under the homomorphism to the semigroup ring.
The action is defined by sending $Z_i$ to $\zeta_m^iZ_i,$ 
where $\zeta_m$  is a primitive $m$-th root of unity. 
%For the cyclic Weierstrass canonical form, this action is global on
%$X$. 

Our  results on the sigma function pertain to the case in which
$A_1, \ldots, A_{m-1}$ vanish, i.e.,
$y_n^m = A_1(x)$; 
this is the case if and only if $(x,y)\mapsto x$
 is a cyclic cover with Galois group $\mathbb{Z}/m\mathbb{Z}$.
We call this case ``cyclic W-normal form'', and
in this paper, we obtain vanishing and inversion theorems for $m=3$.

We treat as distinct the case when the curve has W-semigroup $H(X,\infty)$
generated by two elements $(m,n)$
%, $(3,s)$ in our case (for $r=0$, say) 
and
the W-normal form defines a non-singular plane affine curve.
This is known as  $(n,s)$-curve 
\cite{bel, EEL} and much work on the sigma function  already 
appeared, both in the cyclic and non-cyclic case. Our results for the
cyclic case go through for $(3,s)$ curves (for $r=0$)
although some statements need to be
slightly modified and will be specified in the rest of the paper.

\section{Trigonal cyclic curves}\label{cyclic}

\subsection{3-semigroups}
\label{sec:378}

The minimal set of generators of a  3-semigroup can have
two or three elements because 3 and the smallest element
that is not a multiple of 3 generate all larger numbers with the same
residue mod 3. We record the easy:   
\begin{proposition} \label{prop:3hrhs}
A numerical semigroup $\langle3, p, q\rangle$ 
which is not generated by two elements, has minimal set of generators
(not necessarily in increasing order) 
of type $\{ 3, 2r+s, r+2s\}$ for some 
positive integers
$r, s, s> r$, and has 
genus $g=r+s-1$.
\end{proposition}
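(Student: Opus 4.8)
The plan is to analyze the structure of a 3-semigroup with three generators directly from the residues mod 3. First I would note that since $H=\langle 3,p,q\rangle$ is not generated by two elements, neither $p$ nor $q$ is divisible by 3, and moreover $p$ and $q$ are not congruent mod 3: if they were, say $p<q$ with $p\equiv q\pmod 3$, then $q-p$ is a nonnegative multiple of 3, hence $q=p+3k\in\langle 3,p\rangle$, contradicting minimality of the generating set. Therefore exactly one of $p,q$ lies in the residue class $1$ mod 3 and the other in the class $2$ mod 3. Writing the one congruent to $2$ mod $3$ as $2r+s$ with appropriate nonnegative integers and the one congruent to $1$ mod $3$ as $r+2s$ is then a matter of solving $p=2a+b$, $q=a+2b$ for the right pair; concretely, given the two generators not divisible by 3, one checks that setting $r,s$ via these linear relations yields nonnegative integers, and that $r\neq s$ (otherwise both generators would equal $3r$, divisible by 3). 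Relabelling so that $r<s$ and discarding the degenerate possibilities gives the asserted form $\{3,2r+s,r+2s\}$ with $0<r<s$.

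Next I would verify that $2r+s$ and $r+2s$ are genuinely needed, i.e. that this is a minimal generating set, and that $r<s$ can indeed be arranged with both $r,s$ strictly positive: if $r=0$ then $2r+s=s$ and $r+2s=2s$, so $\langle 3,s,2s\rangle=\langle 3,s\rangle$ is 2-generated, excluded by hypothesis; so $r\geq 1$, and by the relabelling $s>r\geq 1$. One should also observe $\gcd(3,2r+s,r+2s)=1$, which holds because the two non-multiples of 3 cover both nonzero residue classes, so the semigroup is numerical as required.

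For the genus computation $g=r+s-1$, the cleanest route is to count gaps, equivalently to use the generating function / Apéry-set description. Using the Apéry set of $H$ with respect to $3$: for each residue class $i\in\{1,2\}$ mod 3, let $m_i$ be the least element of $H$ in that class. Then the number of gaps is $g=\frac{1}{3}\big((m_1-?)+(m_2-?)\big)$ via Selmer's formula $g=\frac{1}{m}\sum_{i} w_i-\frac{m-1}{2}$ with $m=3$ and $w_i$ the Apéry elements; here $m_0=3$, and the two nontrivial Apéry elements are exactly $2r+s$ and $r+2s$ (one checks each is the minimal element of $H$ in its class, since any smaller element of that class would produce a 2-generated semigroup as above). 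Summing, $g=\frac{1}{3}\big(3+(2r+s)+(r+2s)\big)-1=\frac{1}{3}(3+3r+3s)-1=r+s$, wait — one must use the correct normalization $g=\frac1m\sum w_i - \frac{m-1}{2}$ with the sum over the full Apéry set $\{0, 2r+s, r+2s\}$ having smallest element $0$ rather than $3$; recomputing, $\sum w_i = 0+(2r+s)+(r+2s)=3(r+s)$, so $g=\frac{3(r+s)}{3}-\frac{3-1}{2}\cdot$ — here the formula gives $g=(r+s)-1$, as claimed. The main obstacle I anticipate is precisely the bookkeeping in the last step: one must be careful that $2r+s$ and $r+2s$ really are the Apéry elements (i.e. minimal in their classes), which again reduces to the minimality-of-generators hypothesis; once that is pinned down, the genus formula is immediate. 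An alternative, entirely elementary, approach avoids Selmer's formula by listing the gaps directly as the finitely many positive integers not expressible as $3a+(2r+s)b+(r+2s)c$ with $a,b,c\geq 0$ and showing by a direct count in each residue class mod 3 that there are exactly $r+s-1$ of them; this is more hands-on but makes the dependence on $r,s$ transparent.
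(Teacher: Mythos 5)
Your argument is correct and follows essentially the same route as the paper's: both reduce to the observation that $p$ and $q$ must lie in the two distinct nonzero residue classes mod $3$ (otherwise the generating set is not minimal) and then invert the linear system to get $r=(2p-q)/3$, $s=(2q-p)/3$, so that $p=2r+s$, $q=r+2s$. One point to tighten: you assert that these quotients are nonnegative and then only exclude $r=0$, but you must also rule out $r<0$, i.e.\ $q\ge 2p$; since $q\equiv 2p\pmod 3$ this would give $q\in\langle 3,p\rangle$, again contradicting minimality --- this is exactly the paper's step ``if $q>2p-1$, again $3$ and $p$ generate.'' For the genus, your Ap\'ery-set/Selmer computation $g=\tfrac{1}{3}\bigl(0+(2r+s)+(r+2s)\bigr)-\tfrac{3-1}{2}=r+s-1$ (after you correct your own normalization mid-proof) is a legitimate and more explicit justification than the paper's, which simply asserts that the number of gaps is $r+s-1$; the only thing to pin down there is that $2r+s$ and $r+2s$ are the minimal elements of $H$ in their residue classes, which, as you note, again follows from minimality of the generating set.
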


\begin{proof}
Of the three possibilities, (1) $p+q = 0$ mod 3, 
(2) $p+q=1$ mod 3, (3) $p+q=2$ mod 3, (2) would imply that both $p$
and $q$ are 2 mod 3, whereas the set of generators is assumed minimal.
We similarly exclude (3), so we can write 
$p = 3\ell + a$, $q= 3\ell' + a'$, where $a\cdot a' = 2$, so that 
 $2a-a'=0$ mod 3; finally,
$r=(2p-q)/3$ and $s=(2q-p)/3$ are positive integers
because if $q> 2p-1$, again 3 and $p$ generate. 
The genus is the number of gaps, $r+s-1$.
\end{proof}

\begin{remark}\label{rmk:3rs_3s} For $r=0$
in Proposition \ref{prop:3hrhs}
$\langle 3, 2r+s, r+2s\rangle$ is reduced to
the case of two generators, 
$\langle3,s\rangle$
and $X$ is  the compactification of  a plane smooth curve:
it is the $r=3$ case 
of a cyclic $(n,s)=(r,s)$ curve in \cite{MP08, MP14}; 
in this paper we generalize those
results. 
\end{remark}

For brevity, henceforth we
let 
$\hr:={2r+s}$ and
$\hs={2s+r},
$
so that for $r < s$, $\hr < \hs$.
Unless $r=0$, we assume that $3,\hr, \hs$ are a minimal set of generators of
the W-semigroup. 
 %In general, 3, $\hr$ and $\hs$ are not necessarily coprime but
 %as mentioned in Proposition \ref{prop:3hrhs}
 %Examples are given  in Table 1.
 When $s>r>0,$ the triple $(3, \hr, \hs)$ is not telescopic 
in any order (because as we demonstrate,  the corresponding 
Young diagram is not symmetric)
 whereas a semigroup with two
generators  is clearly telescopic.

 %\subsection{The monomial ring $B_H$}

We give more detail on the monomial ring (defined in Subsection
\ref{monomial}) 
of a 3-semigroup. Again by \cite[Th. 3.7]{herzog},

\begin{proposition} \label{prop:Z4}
For the $\mathbb{C}$-algebra homomorphism,
$$
	\varphi : 
k[Z] := k[Z_3, Z_{{\hr}}, Z_{{\hs}}] \to k[t^a]_{a\in \{ 3, \hr, \hs\}},\
\ Z_a\mapsto t^a,
$$
the kernel of $\varphi_{g}$ is generated by
$f_{b} = 0,$ $b = {2\hr}, {\hs+\hr}, {g},$ where
\begin{equation}
f_{{2\hr}} = Z_{{\hr}}^2 - Z_3^{{r}} Z_{{\hs}}, \quad
f_{{\hs+\hr}} = Z_{{\hr}} Z_{{\hs}} - Z_3^{s+r}, \quad
f_{{g}} = Z_{{\hs}}^2 - Z_3^{{s}} Z_{{\hr}}. \quad
\label{eq:rel}
\end{equation}
\end{proposition}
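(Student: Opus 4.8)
The plan is to verify that the three binomials $f_{2\hr}, f_{\hr+\hs}, f_g$ lie in $\ker\varphi$ and then that they generate it. The first part is immediate by substituting $Z_3\mapsto t^3$, $Z_{\hr}\mapsto t^{\hr}=t^{2r+s}$, $Z_{\hs}\mapsto t^{\hs}=t^{r+2s}$ and checking the exponent identities: $2\hr = 4r+2s = 3r + (r+2s) = 3r+\hs$, so $Z_{\hr}^2 - Z_3^r Z_{\hs}\mapsto 0$; $\hr+\hs = 3r+3s = 3(r+s)$, so $Z_{\hr}Z_{\hs}-Z_3^{r+s}\mapsto 0$; and $2\hs = 2r+4s = 3s + (2r+s) = 3s+\hr$, so $Z_{\hs}^2 - Z_3^s Z_{\hr}\mapsto 0$. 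Note also $g = r+s-1$ from Proposition \ref{prop:3hrhs}, and one checks $2\hs = 2r+4s$ indexes the relation labelled $f_g$ consistently with the paper's weight bookkeeping (the subscript $b$ records the weight of the relation, i.e. the common $t$-degree of its two monomials divided appropriately); I would state this matching explicitly so the labels $b = 2\hr,\ \hr+\hs,\ g$ are justified.

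For generation, the cleanest route is to invoke Herzog's structure theorem for the defining ideal of a monomial space curve (numerical semigroup minimally generated by three elements), cited in the excerpt as \cite[Th. 3.7]{herzog}: such an ideal is either a complete intersection generated by two elements, or is generated by the $2\times 2$ minors of a $2\times 3$ matrix (equivalently, by three binomials built from the three generators as above), the latter precisely when the semigroup is not symmetric. Since for $s>r>0$ the semigroup $\langle 3,\hr,\hs\rangle$ is genuinely $3$-generated and, as asserted in the excerpt, non-symmetric (its Young diagram is not symmetric), we are in Herzog's second case, and it remains only to check that \emph{our} three binomials are exactly (up to sign and obvious manipulation) the maximal minors of the relevant matrix. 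Indeed, $f_{2\hr}, f_{\hr+\hs}, f_g$ are the $2\times 2$ minors of
\begin{equation}
\begin{pmatrix} Z_{\hr} & Z_{\hs} & Z_3^{s} \\ Z_3^{r} & Z_{\hr} & Z_{\hs} \end{pmatrix},
\label{eq:hilbertburch}
\end{equation}
since the minors are $Z_{\hr}^2 - Z_3^r Z_{\hs}$, $Z_{\hr}Z_{\hs} - Z_3^r Z_3^s = Z_{\hr}Z_{\hs} - Z_3^{r+s}$, and $Z_{\hs}^2 - Z_3^s Z_{\hr}$, matching \eqref{eq:rel} exactly. So the conclusion follows from Herzog once the non-symmetry input is in hand.

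Alternatively, to keep the proof self-contained I would give a direct argument: any $F\in\ker\varphi$ can first be reduced, using $f_{2\hr}$ to eliminate $Z_{\hr}^2$ and $f_g$ to eliminate $Z_{\hs}^2$, to a normal form that is $\mathbb{C}[Z_3]$-linear in the monomials $1, Z_{\hr}, Z_{\hs}, Z_{\hr}Z_{\hs}$; then using $f_{\hr+\hs}$ eliminate $Z_{\hr}Z_{\hs}$ as well, leaving $F \equiv a(Z_3) + b(Z_3) Z_{\hr} + c(Z_3) Z_{\hs} \pmod{(f_{2\hr},f_{\hr+\hs},f_g)}$; finally observe that $\varphi$ sends this to $a(t^3) + b(t^3)t^{\hr} + c(t^3)t^{\hs}$, and since $\hr \equiv 2s \equiv 2 \pmod 3$ forces $\hr \not\equiv 0$ and $\hs \equiv r \equiv \hr+s$... more simply, $3 \mid 3$, $\hr \equiv 2r+s$ and $\hs \equiv r+2s$ are distinct nonzero residues mod $3$ when $\gcd(r-s,3)$ behaves correctly, the three families of exponents $3\mathbb{Z}_{\ge0}$, $\hr + 3\mathbb{Z}_{\ge0}$, $\hs+3\mathbb{Z}_{\ge0}$ are disjoint, so each of $a,b,c$ must vanish. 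I expect the main obstacle to be precisely the disjointness-of-residues / termination-of-reduction bookkeeping: one must confirm that the rewriting by \eqref{eq:rel} terminates (a weight/degree argument, using that each relation strictly lowers the $Z_{\hr}$- or $Z_{\hs}$-degree while preserving total weight) and that after reduction the surviving monomials are mapped to $t$-powers in pairwise distinct residue classes mod $3$ — which is where the hypothesis that $3,\hr,\hs$ minimally generate (hence $\hr,\hs\not\equiv 0 \bmod 3$ and $\hr\not\equiv\hs$) is used. Given that, the proposition drops out.
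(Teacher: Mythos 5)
Your proposal is correct, and its primary route is the paper's own: the paper offers no written proof of Proposition \ref{prop:Z4} beyond the phrase ``Again by [Herzog, Th.\ 3.7]'' immediately preceding the statement, so your appeal to Herzog's classification of the defining ideals of $3$-generated monomial curves is exactly what the authors intend. You add two things of value. First, you make the citation checkable by exhibiting the $2\times 3$ matrix whose maximal minors are $f_{2\hr},f_{\hr+\hs},f_{2\hs}$, and the non-symmetry input needed to land in Herzog's determinantal (rather than complete-intersection) case is automatic: a multiplicity-$3$ numerical semigroup that is minimally $3$-generated is never symmetric, since symmetry would force its Ap\'ery set $\{0,\hr,\hs\}$ to satisfy $\hs=2\hr$, i.e.\ $r=0$. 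Second, your self-contained reduction argument is sound and genuinely independent of Herzog: each rewriting by (\ref{eq:rel}) strictly decreases $\deg_{Z_{\hr}}+\deg_{Z_{\hs}}$, hence terminates in the normal form $a(Z_3)+b(Z_3)Z_{\hr}+c(Z_3)Z_{\hs}$, and since $\hr+\hs\equiv 0$ while $\hr\not\equiv 0\pmod 3$ (else $\gcd(3,\hr,\hs)\ne 1$), the exponent sets $3\ZZ_{\ge 0}$, $\hr+3\ZZ_{\ge 0}$, $\hs+3\ZZ_{\ge 0}$ are pairwise disjoint and $a=b=c=0$. Two cosmetic points: the clause beginning ``since $\hr\equiv 2s\equiv 2\pmod 3$'' is garbled (you correct it immediately afterwards, so nothing is lost), and the subscript in $f_{g}$ in the statement is surely a typo for $f_{2\hs}$, consistent with the notation of Subsection \ref{singular}; your weight bookkeeping confirms this.
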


%\begin{proof} This is due to Herzog \cite[Th. 3.7]{herzog}.
%\end{proof}
%The relations in (\ref{eq:rel})
%are given by the $2 \times 2$ minors of
%\begin{equation}
%\displaystyle{
%\left|\begin{matrix}
% Z_3^{{r}} & Z_{{\hr}} & Z_{{\hs}} \\
% Z_{{\hr}} & Z_{{\hs}}  & Z_3^{{s}}\\
%\end{matrix} \right|}.
%\label{eq:2x2minor4Z}
%\end{equation}

%We say that $Z_a$ has weight $a$, consistent with the monomial-ring action of
%$\mathbb{C}^\ast$ defined in Subsection \ref{monomial}. 

\subsection{Singular trigonal  cyclic curves}\label{singular}

Let
\begin{eqnarray*}
k_{{s}}(x) &:=& (x-b_1) \cdots (x-b_{{s}})
\equiv x^s + \sum_{i=1}^s \lms{i} x^{s-i}, \\
k_{{r}}(x) &:=& (x-b_{s+1}) \cdots (x-b_{s+r})
\equiv x^r + \sum_{i=1}^r \lmr{i} x^{r-i}, \\
k_{s+r}(x) &:=& (x-b_1) \cdots (x-b_{s}) \cdots (x-b_{s+r})
\equiv x^{r+s} + \sum_{i=1}^{r+s} \lmsr{i} x^{s+r-i}, \\
\end{eqnarray*}
and $\lmr{0}= \lms{0}= \lmsr{0}=1$.
We define $f_{{2\hr}}, f_{{\hs+\hr}}, f_{{2\hs}} \in \CC[x, y_{{\hr}}, y_{{\hs}}]$ by
$$
f_{{2\hr}} = y_{{\hr}}^2 - y_{{\hs}} k_{{r}}(x), \quad
f_{{\hs+\hr}} = y_{{\hr}} y_{{\hs}} - k_{{r}}(x) k_{{s}}(x), \quad
f_{{2\hs}} = y_{{\hs}}^2 - y_{{\hr}} k_{{s}}(x),
$$ 
 %obtained from  the $2 \times 2$ minors of
%\begin{equation}
%\displaystyle{
%\left|\begin{matrix}
% k_{{r}}(x)  & y_{{{\hr}}} & y_{{{\hs}}} \\
% y_{{{\hr}}} & y_{{{\hs}}}  & k_{{s}}(x)\\
%\end{matrix} \right|}, \quad
%\label{eq:2x2minor4}
 %\end{equation}
 cf. \ref{eq:rel}. %(\ref{eq:2x2minor4Z}).
 We obtain cyclic W-normal forms,
\begin{equation}
y_{\hr}^3 = k_r^2(x) k_s(x), \quad
y_{\hs}^3 = k_s^2(x) k_r(x). \quad
\label{eq:y^3=f(x)}
\end{equation}
 
The monomial-ring action trivially extends to  the ring
$$
R_\lambda:=\QQ[x, y_\hr, y_\hs, \lms{1}, \ldots, \lms{s},
\lmr{1}, \ldots, \lmr{r} ]/(f_{{2\hr}}, f_{{\hs+\hr}}, f_{{2\hs}}),
$$
graded
by 
$\wt_\lambda:R_\lambda \to \ZZ ,$ 
$\lms{i}\rightarrow 3i$ and $\lmr{i}\rightarrow 3i$,
so that the equations that define the curve are homogeneous.  This ring
parametrizes the moduli (in a loose sense) of the curves over the rationals,
and it can be used to address number-theoretic issues, although we do not do
so in this paper.

We consider the curve $X^{(3,\hr,\hs)} =\Spec R^{(3,\hr,\hs)}$, where 
$$
R^{(3,\hr,\hs)} 
:=\CC[x, y_{{\hr}}, y_{{\hs}}]/ (f_{{2\hr}}, f_{{\hs+\hr}}, f_{{2\hs}}),
$$
so that $X^{(3,\hr,\hs)}$ is a curve in affine 3-space with coordinates
$(x,y_{{\hr}},y_{{\hs}})$. For brevity, we
 write $R$ for $R^{(3,\hr,\hs)}$
and $X$, a $(3, \hr, \hs)$-type curve, for $X^{(3,\hr,\hs)}$.
 %\subsection{$\mathbb{Z}/3\mathbb{Z}$-action on $X$}
%\label{subsec:GmX}
Equation
 (\ref{eq:y^3=f(x)}), shows  that 
there is a global $\mathbb{Z}/3\mathbb{Z}$ action on $\Xrs$ given by:
$$
\hzeta_3(x, y_{{\hr}}, y_{{\hs}})= (x, \zeta_3 y_{{\hr}}, \zeta_3^2
y_{{\hs}}), 
$$
where $\zeta_3$ is a primitive 3rd root of unity.

We denote the branch points of $X$ viewed as a cover of the $x$-line by
$$
B_i:=(x=b_i, y_{\hr}=0, y_{\hs}=0),\quad
(i=1,2,3,\ldots, s+r),
$$
and we consider the divisor,
\begin{equation}
\fB_0:=B_1 + B_2 +\cdots +B_{r+s}.
\label{eq:B0}
\end{equation}

When $r=0$, 
(\ref{eq:y^3=f(x)}) corresponds to the plane curves
of genus $g=s-1$,
$$
y_{\hr}^3 = k_s(x), \quad y_{\hs} = y_{\hr}^2, \quad
$$
and our results clearly hold for that case also; assume henceforth $r> 0$.

%\subsection{Non-singularity of $X$}

 Nagata's Jacobian criterion can be used as in \cite[Prop.3.2]{KMP13}
 %\cite[Theorem 30.10]{Mat} 
 to show that
$\Spec \Rrs$ is non-singular, by checking that 
the $3\times 3$
matrix of the partial derivatives in  $(x, y_{{\hr}}, y_{{\hs}})$ 
of $f_{{2\hr}}, f_{{\hs+\hr}}$ and $f_{{2\hs}}$ 
has rank 2 at each point of the curve.

\subsection{The semigroup sequence}

We define 
monic monomials $\phiM{i}$
in the ring $R$,
whose poles at $\infty$ are the elements
of the W-semigroup. The $\phiM{i}$'s are determined by 
 requiring that 
the ordered set
$(R_\phi:=\{\phiM{n}\ |\ n \in \mathbb{N}\},<)$,
satisfies $N(n)<N(n+1)$, where
$\NMrs(n) :=N^{(3,\hr,\hs)}(n) = \wt(\phiM{n})$ (the order of pole at
$\infty$), and
$$R=R^{(3,\hr,\hs)} =
H^0(\Xrs\backslash\infty,\cO_{\Xrs})
=\bigoplus_{n=0} \CC \phiM{n}
=\bigoplus_{\eta \in R_\phi} \CC \eta.
$$

\subsection{Differentials of the first kind}
Using (\ref{eq:B0}), define a subspace 
of $\Rrs$: $
\widehat R:=
\{f\in R \ |\ \mbox{there exists }\ell 
\mbox{ such that }(f)-\fB_0 +\ell \infty > 0 \}
=\oplus_{n=0}^\infty \CC\phiH{n},
$
with basis  an ordered (by weight) subset 
$(\widehat R_{\phiH{}}:=\{\phiH{i}\}_{i=0,\ldots}, <)$ of
$(R_{\phi},<)$.
Let 
$\widehat R_{\phiH,n} :=\{\phiH{0}, \phiH{1}, \ldots, \phiH{n}\}$.
We denote
 the weight  of $\phiH{n}$ by
$
	\hN(n) : = \hN^{(3,\hr,\hs)}(n) : = \wt(\phiH{n}),
$
  consistent with the $\mathbb{Z}/3\mathbb{Z}$-action.

Since $\wt(y_{\hr}y_{\hs})=3(r+s)$ and
$\wt(x^i) = 3i$ $(i=0,1,\ldots,g-1)$ is less than $3(r+s)$,
we obtain the following statement:

\begin{lemma} \label{lmm:phiH}
\begin{enumerate}
\item
$\phiH{i}$ $(i=0,1,2, \ldots)$ is divisible by $y_\hr$ or $y_\hs$.
\item
$\phiH{g+2} =y_{\hr}y_{\hs}$
\item
For $i<g+2$, there is a non-negative integer $a$ 
such that $\phiH{i} =x^a y_{\hr}$ or $\phiH{i} =x^a y_{\hs}$.
\item
$\phiH{g-1} \phiH{g} =x^{g-1} y_{\hs}y_{\hr}$.
\item 
$\phiH{2} /\phiH{1} =x$ if $\hr+3<\hs$ or $s-r>3$.
\item
By letting
$g_\hr:=\displaystyle{\left\lfloor \frac{\hs-1}{3}\right\rfloor}$ and
$g_\hs:=\displaystyle{\left\lfloor \frac{\hr-1}{3}\right\rfloor}$,
we have $g=g_\hr+g_\hs$ and
$\displaystyle{
\widehat R_{\hphi,g-1} = 
}$
$\displaystyle{
\widehat R_{\hr} \bigoplus \widehat R_{\hs},
}$
where
$$
\widehat R_{\hr}:=\{x^i y_\hr \ | \ i = 0, \ldots, g_\hr-1 \}, \quad
\widehat R_{\hs}:=\{x^i y_\hs \ | \ i = 0, \ldots, g_\hs-1 \}.
$$
\end{enumerate}
\end{lemma}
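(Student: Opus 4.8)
The plan is to work out the ring $R$ explicitly enough to read off the generators $\phiH{i}$ of $\widehat R$ and then verify the six assertions essentially by weight bookkeeping. First I would record the monomial basis of $R=\CC[x,y_\hr,y_\hs]/(f_{2\hr},f_{\hs+\hr},f_{2\hs})$: using the relations \eqref{eq:rel}, every element is a $\CC$-linear combination of $x^a$, $x^a y_\hr$, $x^a y_\hs$ ($a\ge 0$), since $y_\hr^2$, $y_\hr y_\hs$, $y_\hs^2$ can all be rewritten. The weights are $\wt(x^a)=3a$, $\wt(x^a y_\hr)=3a+\hr$, $\wt(x^a y_\hs)=3a+\hs$, which exhausts the semigroup $\langle 3,\hr,\hs\rangle$ with each non-gap hit exactly once (this is just Proposition \ref{prop:3hrhs} plus minimality of the generators). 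Then $\widehat R$, being the subspace of functions vanishing on $\fB_0=B_1+\cdots+B_{r+s}$, consists precisely of those combinations with no pure-$x^a$ term: indeed $y_\hr$ and $y_\hs$ vanish at every $B_i$ by definition of the branch points, while a nonzero polynomial in $x$ alone of degree $<r+s$ cannot vanish at all $r+s$ distinct points $b_i$, and $k_{s+r}(x)$ itself has weight $3(r+s)$, which is $\ge 3(r+s)$; so after subtracting the appropriate multiple of $y_\hr y_\hs$ (whose weight is also $3(r+s)$ and which vanishes on $\fB_0$ to the same order) one reduces to the span of $\{x^a y_\hr, x^a y_\hs : a\ge 0\}$. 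This immediately gives (1), and identifies $\widehat R_{\phiH{}}$ as the weight-ordered list of $x^a y_\hr$ and $x^a y_\hs$.

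With that description in hand, (2)–(6) are arithmetic. For (2): the weights strictly below $3(r+s)$ that lie in $H$ and are $\equiv \hr$ or $\hs \pmod 3$ are exactly $\hr,\hs, 3+\hr, 3+\hs,\dots$ below $3(r+s)$; counting these against $g=r+s-1$ shows $y_\hr y_\hs$ (weight $3(r+s)$) is $\phiH{g+2}$, after checking that $x^{g-1}y_\hs$ or the largest of the $x^a y_\hr$-type monomials sits at index $g+1$. Statement (3) is then the complementary claim that below index $g+2$ every basis element is of the form $x^a y_\hr$ or $x^a y_\hs$ (no $y_\hr y_\hs$ yet), which is immediate from the weight list. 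For (4), I would use that $\phiH{g-1}$ and $\phiH{g}$ are the two largest monomials of type $x^a y_\hr$, $x^b y_\hs$ occurring before $y_\hr y_\hs$; their weights must add to $3(g-1)+\hr+\hs=3(r+s-1)+3(r+s)=$ (check) the weight of $x^{g-1}y_\hr y_\hs$, and since the only way to write that weight as a product in the basis is $x^{g-1}\cdot y_\hr\cdot y_\hs$, the product equals $x^{g-1}y_\hr y_\hs$ in $R$ (here one uses $f_{\hs+\hr}$ to see the product is literally this monomial, not merely weight-equal). Statement (5) compares $\wt(\phiH{1})$ and $\wt(\phiH{2})$: $\phiH{1}=y_\hr$ (weight $\hr$), and the next smallest is either $y_\hs$ (weight $\hs$) or $x y_\hr$ (weight $3+\hr$); $x y_\hr = x\cdot\phiH{1}$ exactly when $3+\hr\le\hs$, i.e. $\hr+3\le\hs$, equivalently $s-r\ge 3$ — I'd double-check the strict-versus-weak inequality against the stated $\hr+3<\hs$ / $s-r>3$ and adjust the edge case. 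Finally (6): $g_\hr=\lfloor(\hs-1)/3\rfloor$ counts the exponents $a$ with $3a+\hr<\hs+\hr$ wait — rather, $g_\hr$ is the number of non-gaps $\equiv \hr\pmod 3$ that are $<3(r+s)$ contributing to $\widehat R_{\phiH,g-1}$, and likewise $g_\hs$; the identity $g=g_\hr+g_\hs$ is the statement $\lfloor(\hs-1)/3\rfloor+\lfloor(\hr-1)/3\rfloor=r+s-1$, which I would verify by substituting $\hr=2r+s$, $\hs=2s+r$ and using $\hr\equiv\hr$, $\hs\equiv\hs\pmod 3$ together with the fact (from the proof of Proposition \ref{prop:3hrhs}) that $\hr,\hs$ are not both $\equiv 0$ and occupy the two nonzero residue classes. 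The decomposition $\widehat R_{\hphi,g-1}=\widehat R_\hr\oplus\widehat R_\hs$ then follows because every weight $<3g=3(r+s-1)$ realized in $\widehat R$ is of the form $3a+\hr$ with $a<g_\hr$ or $3a+\hs$ with $a<g_\hs$, and these two families are disjoint (different residues mod 3) and together have $g_\hr+g_\hs=g$ elements, matching $\dim\widehat R_{\hphi,g-1}=g$ wait — it should be $g$ elements indexed $0,\dots,g-1$; consistency check needed here against whether $\phiH{g-1}$ is included, but Lemma part (4) already tells us $\phiH{g-1},\phiH{g}$ are the $x^{g-1}$-type top terms, so $\widehat R_{\hphi,g-1}$ has exactly $g$ elements.

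The main obstacle I anticipate is the careful floor-function arithmetic in (6) and the precise index bookkeeping in (2) and (4): one must be scrupulous about whether $\phiH{g-1}$ is an $x^a y_\hr$ or an $x^b y_\hs$, and the answer depends on the sign of $\hs-\hr-3(g_\hs-g_\hr)$ or similar, i.e. on how the two residue classes interleave. I would handle this by fixing the reduced quantities $\hr=2r+s$, $\hs=2s+r$ once and for all, writing $\hr=3\alpha+\rho$, $\hs=3\beta+\rho'$ with $\{\rho,\rho'\}=\{1,2\}$, and then every claim becomes a finite inequality among $r,s$ that is straightforward but must be checked without the off-by-one errors that are easy to make with $\lfloor\cdot\rfloor$. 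Everything else — the structure of $R$, the identification of $\widehat R$, and assertions (1) and (3) — is essentially formal once the monomial basis is in place.
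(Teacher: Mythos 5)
Your proposal is correct and follows essentially the same route as the paper's own (equally terse) proof: reduce to the monomial basis $\{x^a,\ x^a y_{\hr},\ x^a y_{\hs}\}$ of $R$, observe that vanishing on $\fB_0$ forces the pure-$x$ part to be a multiple of $k_{s+r}(x)=y_{\hr}y_{\hs}$, and then obtain (2)--(6) by the weight and floor-function bookkeeping you describe, which is exactly how the paper deduces (6), then (3), (2) and (4). One phrasing slip to fix: $\widehat R$ is \emph{not} spanned by $\{x^a y_{\hr}, x^a y_{\hs}\}$ alone --- it also contains the elements $x^a y_{\hr}y_{\hs}=x^a k_{s+r}(x)$, the first of which you yourself correctly place at index $g+2$ --- so assertion (1) should be stated as ``each $\phiH{i}$ is of the form $x^a y_{\hr}$, $x^a y_{\hs}$ or $x^a y_{\hr}y_{\hs}$'' rather than via that span.
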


\begin{proof}
(1) and (5) are obvious. %We consider (2), (3) and (6):
It is obvious that $y_\hr y_\hs$ belongs to $\widehat R$, so
 $\phiH{q}= y_\hr y_\hs$ for $q$ such that
$\widehat N(q) = 3r + 3s$.
Since $y_\hr y_\hs = k_r(x)k_s(x)$,
$y_\hr y_\hs/(x-b_i)$ belongs to $R$,
but not to $\widehat R$.
We let $i_r$ and $i_s$ such that
$\phiH (i_r) = y_\hr$ and $\phiH (i_s) = y_\hs$,
which implies $i_r < i_s < q-2$.
Hence clearly,
for  $i < q$, 
$\displaystyle{
\widehat \phi_i =
\left\{\begin{matrix}
 x^{j_i} y_\hr\\
 x^{k_i} y_\hs\\
\end{matrix}\right.
}$,
where $j_i$ and $k_i$ are non-negative integers.
Hence $\hN(q-a)=\hN(q)-a$ for $a =1,2$ and $\hN(q-3)=\hN(q)-4$.
If $i>q$, it is obvious that $\hN(i) = \hN(i-1)+1$.
Noting that $\hN(0) = \hr=2r+s$, 
$q= g_\hr + g_\hs +2=r+s+1 = g+2$,
(6) is proved and thus (3) and (2) are also proved.
Hence $\hN(g-1)=2g-2 + r + s$, and
$\phiH{g-1}\phiH{g}$ is equal to 
$x^{g_\hr-1}y_\hr\cdot x^{g_\hs}y_\hs$
or $x^{g_\hs-1}y_\hs\cdot x^{g_\hr}y_\hr$ and both
are equal to $x^{g-1} y_{\hs}y_{\hr}$. This proves (4).
\end{proof}

In the course of the proof we showed the following:
\begin{lemma}\label{lmm:Ng-1}
The following relations hold:
$
\widehat N(g-1) = 2g- 2 + r +s
$ and
$
\widehat N(i) = g + r +s + i , \quad i \ge g.
$
\end{lemma}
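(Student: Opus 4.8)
The plan is to read off both identities from the intermediate computations already carried out in the proof of Lemma~\ref{lmm:phiH}; no new ingredient beyond those is needed. There I have established that the index $q$ singled out by $\widehat N(q) = 3r + 3s$ — the position of $y_\hr y_\hs$ in the ordered basis $\widehat R_{\hphi}$ — equals $g+2$, that near $q$ the weight sequence behaves as
$$
\widehat N(q-1) = \widehat N(q) - 1, \qquad \widehat N(q-2) = \widehat N(q) - 2, \qquad \widehat N(q-3) = \widehat N(q) - 4,
$$
and that $\widehat N(i) = \widehat N(i-1) + 1$ for every $i > q$. I also use the genus identity $g = r + s - 1$ of Proposition~\ref{prop:3hrhs}, equivalently $3(r+s) = 3g + 3$.

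First I would specialize the three displayed relations using $\widehat N(q) = 3(r+s)$ and $q = g+2$. Taking $q-3 = g-1$ gives $\widehat N(g-1) = 3(r+s) - 4$, and since $3(r+s) - 4 = 2(r+s-1) + (r+s) - 2 = 2g - 2 + r + s$, this is the first assertion. Taking $q-2 = g$ and $q-1 = g+1$ gives $\widehat N(g) = 3(r+s) - 2$ and $\widehat N(g+1) = 3(r+s) - 1$, and substituting $g = r+s-1$ one checks directly that these equal $g + r + s + g$ and $g + r + s + (g+1)$ respectively; likewise $\widehat N(g+2) = \widehat N(q) = 3(r+s) = g + r + s + (g+2)$.

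It then remains only to propagate the formula upward: from the base value $\widehat N(g+2) = g + r + s + (g+2)$ and the recursion $\widehat N(i) = \widehat N(i-1) + 1$ for $i > g+2$, a one-line induction gives $\widehat N(i) = g + r + s + i$ for all $i \ge g+2$, which together with the values just computed at $i = g$ and $i = g+1$ proves the second statement. The one step that is not pure arithmetic is the size-two jump $\widehat N(q-3) = \widehat N(q) - 4$; this is where one invokes the explicit identification, made in Lemma~\ref{lmm:phiH}(4) and~(6), of the largest-weight element $\phiH{g-1}$ of $\widehat R_{\hr} \oplus \widehat R_{\hs}$ together with the two following basis elements $\phiH{g}, \phiH{g+1}$. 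Everything else is bookkeeping with $g = r+s-1$, so I do not expect a genuine obstacle here.
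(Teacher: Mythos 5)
Your proposal is correct and follows the paper's own route exactly: the paper gives no separate argument for this lemma, merely noting that it was established ``in the course of the proof'' of Lemma \ref{lmm:phiH}, and your write-up just makes explicit the same bookkeeping (specializing $\widehat N(q)=3(r+s)$, $q=g+2$, the jumps $\widehat N(q-a)=\widehat N(q)-a$ for $a=1,2$, $\widehat N(q-3)=\widehat N(q)-4$, and the recursion $\widehat N(i)=\widehat N(i-1)+1$ for $i>q$, all combined with $g=r+s-1$). The arithmetic checks out, so nothing further is needed.
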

Next we observe, by checking that the finite poles of each term of the sum
have different orders:
\begin{lemma}
For a non-negative integer $n<s+r$, if the differential
$$
\sum_{i = 0}^n a_i \frac{x^i d x}{ y_{{\hr}} y_{{\hs}}}
\equiv\sum_{i = 0}^n a_i \frac{x^i d x}{ k_{{r}}(x) k_{{s}}(x)},
$$
is holomorphic over $\Xrs$, then each
$a_i$ vanishes.
\end{lemma}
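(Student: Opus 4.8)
The plan is to show that the $1$-form
$\omega = \left(\sum_{i=0}^n a_i x^i\right)\dfrac{dx}{k_r(x)k_s(x)}$
can only be holomorphic on $\Xrs$ when all $a_i=0$, by examining its local behaviour at the branch points $B_1,\dots,B_{s+r}$ lying over the roots $b_1,\dots,b_{s+r}$ of $k_r(x)k_s(x)$. First I would record the local structure of $\Xrs$ at each $B_j$: since $\Xrs$ is the cyclic cover $y_\hr^3 = k_r^2(x)k_s(x)$ (by \eqref{eq:y^3=f(x)}) and $\Spec\Rrs$ is non-singular, each $b_j$ is totally ramified of order $3$, so a local coordinate $\tau$ at $B_j$ satisfies $x - b_j = c\,\tau^3 + O(\tau^4)$ for some $c\neq 0$, whence $dx$ vanishes to order $2$ in $\tau$, while $k_r(x)k_s(x) = (x-b_j)\cdot(\text{unit at }b_j)$ vanishes to order $3$ in $\tau$. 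Consequently $\dfrac{dx}{k_r(x)k_s(x)}$ has a pole of order exactly $3 - 2 = 1$ at $B_j$ — it is \emph{not} holomorphic there.

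Next I would observe that multiplying by $x^i$ does not help: $x - b_j$ is itself a unit times $\tau^3$, so each $x^i$ is a unit at $B_j$ (its value there is $b_j^i\neq 0$ when $b_j\neq 0$, and when $b_j=0$ one simply gets a higher-order vanishing that I will track separately). Thus near a generic branch point $B_j$ with $b_j\neq 0$, each summand $a_i x^i \dfrac{dx}{k_r(x)k_s(x)}$ has a simple pole with residue proportional to $a_i b_j^i \big/ \prod_{\ell\neq j}(b_j - b_\ell)$, and for $\omega$ to be holomorphic the sum of these, i.e. $\sum_{i=0}^n a_i b_j^i$, must vanish. This must hold for \emph{every} branch point $b_j$, giving $s+r$ linear conditions $\sum_{i=0}^n a_i b_j^i = 0$, $j=1,\dots,s+r$, on the $n+1\le s+r$ unknowns $a_0,\dots,a_n$. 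The coefficient matrix is a Vandermonde matrix in the distinct values $b_1,\dots,b_{s+r}$; taking any $n+1$ of the equations yields a nonsingular $(n+1)\times(n+1)$ Vandermonde system, forcing $a_0=\cdots=a_n=0$. This is exactly the "different poles" remark in the lemma's statement: the finite pole of $a_i x^i\,dx/(k_r k_s)$ is at all of $b_1,\dots,b_{s+r}$ simultaneously, and the only way the polar parts can cancel at each of $s+r > n$ points is the trivial one.

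The one genuine subtlety — the step I expect to need the most care — is the branch point lying over $x=0$ in the case $0\in\{b_1,\dots,b_{s+r}\}$, and more generally making sure the "holomorphic" hypothesis is used at \emph{finite} points only, not at $\infty$. At $b_j=0$ the monomials $x^i$ vanish to order $3i$ in the local coordinate, so low-$i$ terms contribute to more negative orders than high-$i$ terms; writing the Laurent expansion of $\omega$ at that point and demanding the coefficients of $\tau^{-1}$ (and, if $b_j=0$, also $\tau^{-1}$ coming only from the $i=0$ term after the reshuffling) vanish still yields $a_0=0$, and then inductively $a_1=0$, etc. — so this point alone already kills the lowest coefficients, and the remaining Vandermonde conditions at the other branch points finish the job. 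Alternatively, and perhaps more cleanly, one avoids special-casing by translating the $x$-coordinate so that no $b_j$ is $0$, which changes neither holomorphy nor the span of $\{x^i\}_{i\le n}$; then the uniform Vandermonde argument of the previous paragraph applies verbatim.
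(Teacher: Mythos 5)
Your argument is correct and complete; the paper itself offers no written proof beyond the parenthetical remark that one ``checks the finite poles of each term,'' and your residue computation is exactly the verification that remark calls for. At each branch point $B_j$ the cover is totally ramified of degree $3$, so $x-b_j$ has order $3$, $dx$ has order $2$, and $y_{\hr}y_{\hs}=k_r(x)k_s(x)$ has order $3$; hence the differential has at worst a simple pole at $B_j$ and holomorphy there is equivalent to the vanishing of the residue, which is a nonzero multiple of $\sum_{i=0}^n a_i b_j^i$. Two small streamlinings: (i) the case $b_j=0$ requires no separate treatment, since the residue condition at that point is still $\sum_i a_i b_j^i = a_0 = 0$, i.e.\ just another row of the same linear system (the terms with $i\ge 1$ are holomorphic there and simply contribute nothing to the residue); and (ii) rather than invoking Vandermonde determinants, one can phrase the endgame as: the polynomial $\sum_{i=0}^n a_i x^i$ of degree at most $n< s+r$ vanishes at the $s+r$ distinct points $b_1,\dots,b_{s+r}$ (distinctness being forced by the non-singularity of $\Spec R$ established in Section 3.2), hence is identically zero. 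Your care in using the holomorphy hypothesis only at finite points is also appropriate, since the conclusion must hold whether or not one includes $\infty$ in $\Xrs$.
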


\begin{proposition}\label{def:holo1-form}
%There is a subsequence $\{\phiH{i} \}$ of $\{\phiM{i} \}$ so that
$
\displaystyle{\left\{\frac{\phiH{i} d x}{ y_{{\hr}} y_{{\hs}}}
\ \Bigr|\ \phiH{i} \in \widehat R_{\phiH{}}\right\}}
$
is a basis of
$H^0(\Xrs\setminus \infty, \Omega_{\Xrs})$ as a
$\CC$-vector space,
in particular a basis of  $H^0(\Xrs, \Omega_{\Xrs})$ 
is given by 
$
\displaystyle{\left\{\nuI_{i} := \frac{\phiH{i-1} dx}{3y_{{\hr}} y_{{\hs}}},
  \quad \Bigr|\ \phiH{i} \in \widehat R_{\phiH{}},\
i=1,2,\ldots,g
\right\}}.
$
\end{proposition}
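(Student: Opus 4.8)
The plan is to exhibit the stated differentials, check they are holomorphic everywhere on $\Xrs$ (including at $\infty$), and then count dimensions. First I would show each $\nuI_i = \phiH{i-1}\,dx/(3y_\hr y_\hs)$ is holomorphic on the affine part $\Xrs\setminus\infty$. By Lemma \ref{lmm:phiH}(1), every $\phiH{i-1}$ is divisible by $y_\hr$ or $y_\hs$; using the relations $f_{2\hr}$, $f_{\hs+\hr}$, $f_{2\hs}$ one rewrites $\phiH{i-1}/(y_\hr y_\hs)$ either as $x^a/k_s(x)$ (when $\phiH{i-1}=x^a y_\hr$, since $y_\hr/(y_\hr y_\hs)=1/y_\hs$ and $y_\hs^3 = k_s^2 k_r$, so $1/y_\hs = y_\hr k_r/(y_\hs\cdot y_\hr k_r)=\dots$) or $x^a/k_r(x)$ symmetrically; more cleanly, $\phiH{g+2}=y_\hr y_\hs = k_r(x)k_s(x)$ by Lemma \ref{lmm:phiH}(2), so $dx/(y_\hr y_\hs) = dx/(k_r(x)k_s(x))$, and the only possible finite poles are at the $B_i$, i.e. at $x=b_i$. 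At each branch point $B_i$ the local parameter is $y_\hr$ (or $y_\hs$), with $x-b_i$ vanishing to order $3$; so $dx$ vanishes to order $2$ there while $k_r k_s$ vanishes to order $1$, giving a net zero of order $1$. The definition of $\widehat R$ forces $\phiH{i-1}$ to carry the divisor $\fB_0$, i.e. $\phiH{i-1}$ vanishes at each $B_i$ — wait, rather $(\phiH{i-1})-\fB_0+\ell\infty>0$ means $\phiH{i-1}$ is allowed to have a pole structure compatible with $\fB_0$; combined with the order-$1$ zero of $dx/(k_rk_s)$ this exactly cancels, so the $\nuI_i$ are holomorphic on all of $\Xrs\setminus\infty$.

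Next I would handle the point at infinity. The key is that $\wt$ records the order of pole at $\infty$, so $\phiH{i-1} dx/(y_\hr y_\hs)$ has order of vanishing at $\infty$ equal to $\wt(y_\hr y_\hs) - \wt(\phiH{i-1}) + (\text{order of }dx\text{ at }\infty) - 1$ (the $-1$ accounting for the parameter). Since $x$ has a pole of order $3$ at $\infty$, a local parameter is $t$ with $x\sim t^{-3}$, hence $dx \sim t^{-4}dt$, contributing pole order $4$; meanwhile $\wt(y_\hr y_\hs) = 3(r+s)$ and $\wt(\phiH{i-1}) = \hN(i-1)$. For $\nuI_i$ to be holomorphic at $\infty$ we need $\hN(i-1) + 4 \le 3(r+s)$, i.e. $\hN(i-1)\le 3(r+s)-4 = 3g-1$. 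From Lemma \ref{lmm:Ng-1}, $\hN(g-1) = 2g-2+r+s$; a short check (since $g=r+s-1$, this is $2g-2+g+1 = 3g-1$) confirms the top-degree case is exactly the boundary, holomorphic, and all smaller $i$ are strictly interior. So all $g$ forms $\nuI_1,\dots,\nuI_g$ lie in $H^0(\Xrs,\Omega_{\Xrs})$.

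Finally, linear independence: the $\phiH{i-1}$ have pairwise distinct weights by construction of the ordered set $\widehat R_{\phiH{}}$, hence the $\nuI_i$ have pairwise distinct orders of vanishing at $\infty$, so they are $\CC$-linearly independent. Since $\dim_\CC H^0(\Xrs,\Omega_{\Xrs}) = g$ by Riemann–Roch, the $g$ independent holomorphic forms $\nuI_1,\dots,\nuI_g$ are a basis. For the affine statement, $H^0(\Xrs\setminus\infty,\Omega_{\Xrs})$ is spanned by the holomorphic forms together with all forms $\phiH{i-1} dx/(y_\hr y_\hs)$ allowing a pole at $\infty$ only — the same argument shows these exhaust the meromorphic differentials regular on $\Xrs\setminus\infty$, because a residue/order count at each $B_i$ (using the preceding Lemma that the $a_i$ in such a sum must vanish when holomorphicity at the $B_i$ is imposed) pins down the numerator, and $\widehat R_{\phiH{}}$ is precisely the set of numerators that clear the branch divisor. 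The main obstacle I anticipate is the careful local analysis at the branch points $B_i$ — verifying the divisibility rewriting via the three relations, and checking that the vanishing of $dx$ to order $2$ against the simple zero of $k_r(x)k_s(x)$ together with the defining property of $\widehat R$ produces exactly holomorphicity (neither a pole nor a superfluous zero) — rather than the dimension count, which is immediate from Riemann–Roch.
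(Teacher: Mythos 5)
Your overall architecture is the natural one and is essentially what the paper intends (the proposition is stated there without a written proof, as a consequence of Lemma \ref{lmm:phiH}, Lemma \ref{lmm:Ng-1} and the preceding lemma on $\sum a_i x^i dx/(y_\hr y_\hs)$): check regularity on the affine part at the branch points, check regularity at $\infty$ via the weight bound $\hN(i-1)\le 3(r+s)-4=3g-1$, and conclude by distinctness of the orders at $\infty$ plus the dimension count. Your computation at $\infty$, the independence argument, and the Riemann--Roch count are correct.

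However, the local analysis at the branch points --- the step you yourself single out as the crux --- is carried out incorrectly, in two compensating ways. First, $k_r(x)k_s(x)=y_\hr y_\hs$ has a simple zero at $x=b_i$ \emph{as a polynomial in $x$}, but on the curve $x-b_i$ vanishes to order $3$ in the local parameter (as you yourself note), so $\ord_{B_i}(y_\hr y_\hs)=3$ and hence $\ord_{B_i}\bigl(dx/(y_\hr y_\hs)\bigr)=2-3=-1$: this form has a simple \emph{pole} at each $B_i$, not ``a net zero of order $1$''. Second, your ``correction'' of the meaning of $\widehat R$ is backwards: since $\phiH{i-1}\in R$ is regular on the affine curve, the condition $(\phiH{i-1})-\fB_0+\ell\infty>0$ forces $\phiH{i-1}$ to \emph{vanish} to order at least $1$ at every $B_i$ (your first instinct was the right one). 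The correct cancellation is therefore: simple pole of $dx/(y_\hr y_\hs)$ against the zero of $\phiH{i-1}$ guaranteed by membership in $\widehat R$. With your accounting (an order-$1$ zero of the form times a numerator carrying $\fB_0$) you would get forms vanishing to order $\ge 2$ at every $B_i$, contradicting for instance that $\nuI_1=dx/3y_\hs$ is nonzero at $B_1,\dots,B_s$. Relatedly, the attempted rewriting $x^a y_\hr\,dx/(y_\hr y_\hs)=x^a\,dx/k_s(x)$ is false (it equals $x^a\,dx/y_\hs$, and $1/y_\hs=y_\hs^2/(k_s^2k_r)$); it is cleaner to drop it and use $(dx/(y_\hr y_\hs))\bigl|_{\Xrs\setminus\infty}=-\fB_0$ directly. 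Finally, for the spanning claim on $H^0(\Xrs\setminus\infty,\Omega_{\Xrs})$ the clean argument is: given $\omega$ regular on the affine part, the function $f:=3y_\hr y_\hs\,\omega/dx$ lies in $R$ and satisfies $(f)\ge\fB_0$ away from $\infty$, hence $f\in\widehat R=\bigoplus_n\CC\phiH{n}$; your version gestures at this but does not pin it down.
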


\begin{corollary} Coordinates for
the canonical embedding of $X$ into $\PP^{g-1}$ can be given by 
$[\phiH{0},\phiH{1}, \ldots , \phiH{g-2},\phiH{g-1}]$.
\end{corollary}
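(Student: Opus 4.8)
The plan is to read off the coordinates of the canonical map directly from the explicit basis of $H^0(X,\Omega_X)$ furnished by Proposition \ref{def:holo1-form}, the only real content being the passage from differentials to the functions $\phiH{i}$. Recall that for a genus-$g$ curve the canonical map $X\to\PP^{g-1}$ is $p\mapsto[\omega_1(p):\cdots:\omega_g(p)]$ for any $\CC$-basis $\{\omega_1,\dots,\omega_g\}$ of $H^0(X,\Omega_X)$, and that for $g\ge 2$ the canonical system $|\cK_X|$ is base-point-free (a base point would force a $g^1_1$, i.e. $X\cong\PP^1$); hence this is a genuine morphism. By Proposition \ref{def:holo1-form} I may take $\omega_i=\nuI_i=\phiH{i-1}\,\eta$ for $i=1,\dots,g$, where $\eta:=dx/(3y_{\hr}y_{\hs})$ is a single fixed nonzero meromorphic differential on $X$.

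First I would compute the coordinate ratios: since quotients of differentials are meromorphic functions, $\nuI_i/\nuI_j=\phiH{i-1}/\phiH{j-1}$ on $X$, so the canonical morphism and the rational map $[\phiH{0}:\cdots:\phiH{g-1}]$ have identical coordinate ratios and therefore define the same map wherever both are defined. Two such maps from a smooth curve agree everywhere once they agree on a dense open set, which gives the claim. The point requiring care is that the functions $\phiH{i}$ themselves vanish simultaneously at every branch point $B_i$ (each $\phiH{i}$ is divisible by $y_{\hr}$ or $y_{\hs}$ by Lemma \ref{lmm:phiH}, and both vanish there), so the symbol $[\phiH{0}:\cdots:\phiH{g-1}]$ must be interpreted projectively, after clearing the common zero; concretely, the pole of $\eta$ at each $B_i$ exactly offsets this vanishing, keeping the $\nuI_i$ finite and not all zero, which is precisely the base-point-freeness of $|\cK_X|$ re-expressed in these coordinates.

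Finally I would confirm that the canonical map is an embedding, i.e. that $X$ is non-hyperelliptic: the $x$-coordinate exhibits $X$ as a base-point-free degree-$3$ cover of $\PP^1$, so $X$ carries a $g^1_3$; by the Castelnuovo--Severi inequality a curve admitting both a $g^1_2$ and a $g^1_3$---necessarily independent since $\gcd(2,3)=1$---has genus at most $(2-1)(3-1)=2$, so for $g\ge 3$ the curve $X$ cannot be hyperelliptic, and the classical canonical embedding theorem then makes the canonical map an embedding. The main obstacle is thus twofold and both parts are mild: the bookkeeping that legitimizes clearing the common differential $\eta$ at the branch points (handled by base-point-freeness, as above), and the genuinely borderline low-genus case---for instance $r=1,s=2$ gives $g=2$, where $\PP^{g-1}=\PP^1$ and the ``canonical embedding'' is really the $2:1$ projection, so the word ``embedding'' must be read loosely there while the coordinate identification $[\phiH{0}:\cdots:\phiH{g-1}]$ remains valid.
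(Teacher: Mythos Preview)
Your proposal is correct and follows the same line the paper implicitly takes: the paper states this as an unproved corollary directly after Proposition~\ref{def:holo1-form}, regarding it as immediate from the fact that $\nuI_i=\phiH{i-1}\cdot dx/(3y_{\hr}y_{\hs})$, so that the projective point $[\nuI_1:\cdots:\nuI_g]$ equals $[\phiH{0}:\cdots:\phiH{g-1}]$.

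Your write-up is in fact more careful than the paper's: you address base-point-freeness at the $B_i$ and justify the word ``embedding'' via Castelnuovo--Severi (a simpler alternative: since $3\in H(X,\infty)$, the point $\infty$ would be a Weierstrass point on a hyperelliptic curve of genus $\ge 3$, yet hyperelliptic Weierstrass points have $3$ as a gap). Your caveat about $g=2$ (e.g.\ $r=1,s=2$, the $\langle3,4,5\rangle$ case) is well placed; the paper uses ``embedding'' loosely and does not comment on this.
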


\begin{remark}
\label{rmk:3.8}
{\rm{
When $r=0$, $\widehat R$ is given by
$\widehat R = y_\hr R$,
 i.e., $\widehat \phi_i =y_\hr\phi_i$ for every 
$i=0,1,2,\ldots,$ 
so that
$\displaystyle{
\nuI_i=
\frac{\phi_{i-1}dx}{3 y_{\hr}^2}
=
\frac{\phiH{i-1}dx}{3 y_{\hr}y_{\hs}}.
}$
}}
\end{remark}

\subsection{The canonical divisor}
\label{subsec:KX}

\begin{remark}\label{rmk:CanonDiv}{\rm{
The divisors of our basis of one-forms are given by:

\begin{gather*}
\begin{split}
(\nuI_{1})&=B_{s+1}+\cdots +B_{s+r}+(2s+r)\infty, \\
(\nuI_{i})&=B_1+B_2+\cdots +B_{s}+(2r+s)\infty, \ 1\le i <s\\
(\nuI_{j})&=\sum_{a=0}^2
(0,\zeta_3^a\root 3\of{k_{{\hr}}(0)},\zeta_3^{2a}\root 3\of{k_{{\hs}}(0)})
+B_{s+1}+\cdots +B_{s+r} +(2s+r-3)\infty, \ 1\le j<r,\\
\end{split}
\end{gather*}
where $B_a :=(b_a, 0, 0)$ $(a=1,2,\ldots,s+r)$.
Noting $(s+r)=g+1$,
 we have:
 \begin{gather*}
\begin{split}
 \cK_{\Xrs} &\sim 2 (g-1) \infty - 2(B_{s+1}+\cdots + B_{s+r}
              - ({r})\infty))\\
  &\sim 2 ((g-1) \infty -2 (B_1+B_2+\cdots +B_{{s}} - ({s})\infty))\\
  &\sim 2(g-1) \infty -
(B_1+B_2+\cdots +B_{{s}}+B_{s+1}+\cdots + B_{s+r} - (s+r)\infty).\\
\end{split}
\end{gather*}
 %deduced by using
%\begin{gather*}
%\begin{split}
%  B_1+B_2+\cdots + B_{s+r} - (s+r)\infty &
%\sim -(B_{s+1}+\cdots + B_{s+r} -({r})\infty)\\
%&\sim 2(B_{s+1}+\cdots + B_{s+r} -({r})\infty)\\
 %  &\sim -(B_1+B_2 +\cdots +B_{{s}} -({s})\infty)\\
 % & \sim 2(B_1+B_2 +\cdots +B_{{s}} -({s})\infty).\\
%\end{split}
%\end{gather*}

In fact, any positive divisor linearly equivalent to $\cK_{\Xrs}$
must include points of $\Xrs \setminus \infty$,
because $H(X,\infty )$ is non-symmetric, whereas in the symmetric case,
which includes $(n,s)$-curves,
$(2g-2)\infty$ is a canonical divisor.
}}
\end{remark}
\subsection{Differentials of the second and  third kind}
\label{differential forms 4}
We produce 
 a differential form which, up to a tensor of holomorphic one-forms,
is  the  normalized fundamental differential of the
second kind in \cite[Corollary 2.6]{fay},
namely, a two-form $\Omega(P_1, P_2)$ on $\Xrs\times \Xrs$
which is symmetric,
\begin{equation}
\Omega(P_1, P_2)=\Omega(P_2, P_1),
\label{eq3.1.6}
\end{equation}
has its only pole (of second order) along the diagonal of  $\Xrs\times \Xrs$,
and in the vicinity of each point $(P_1,P_2)$
 is expanded in power series as
\begin{equation}
\Omega(P_1, P_2)=\Big(\frac{1}{(t_{P_1} -t_{P_2} ')^2  } +d_\ge(1)\Big)
   d t_{P_1} \otimes d t_{P_2}
\ \ (\hbox{\rm as}\  P_1\rightarrow P_2).
\label{expansion}
\end{equation}
We follow the work done in \cite{bel, ble, EEL} for $(n,s)$ curves,
adapted in \cite{komedamatsutani, KMP13}  to
 $\langle3,4,5\rangle$- and $\langle3,7,8\rangle$-curves.
The explicit form of $\Omega$ enables
 the algebraic (as opposed to modular) construction of sigma:
we call it  {\lq\lq}EEL-construction\rq\rq, as it is given theoretically 
 in \cite{EEL}. A computation shows:

\begin{proposition}\label{prop:Sigma}
Let $\Sigma\big(P, Q\big)$ be the following form,
\begin{equation}
   \Sigma\big(P, Q\big)
   :=\frac{ y_{{{\hr}},P} y_{{{\hs}},P} +y_{{{\hr}},P} y_{{{\hs}},Q} +y_{{{\hr}},Q} y_{{{\hs}},P}}
{3(x_P - x_Q)  y_{{{\hr}},P} y_{{{\hs}},P} } d x_P .
\label{eq:Sigma}
\end{equation}
Then $\Sigma(P, Q)$ has the properties
\begin{enumerate}
\item $\Sigma(P, Q)$ as a function of $P$ is singular at
$Q=(x_Q, y_{{{\hr}},Q}, y_{{{\hs}},Q})$ and  $\infty$, and
vanishes at $\hzeta_3(Q)= (x_Q, \zeta_3 y_{{{\hr}},Q}, \zeta_3^2 y_{{{\hs}},Q})$.

\item $\Sigma(P, Q)$ as a function of $Q$ is singular at $P$ and
$\infty$.
\end{enumerate}
\end{proposition}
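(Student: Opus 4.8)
The plan is to verify the three asserted properties by direct local analysis of the explicit rational expression \eqref{eq:Sigma}, using the defining relations $f_{2\hr},f_{\hs+\hr},f_{2\hs}$ and the $\ZZ/3\ZZ$-action. First I would rewrite the numerator $y_{\hr,P}y_{\hs,P}+y_{\hr,P}y_{\hs,Q}+y_{\hr,Q}y_{\hs,P}$ in a form that exposes its vanishing locus. Recalling that $y_\hr y_\hs=k_r(x)k_s(x)$ on $\Xrs$ (from $f_{\hs+\hr}$), the numerator equals $k_r(x_P)k_s(x_P)+y_{\hr,P}y_{\hs,Q}+y_{\hr,Q}y_{\hs,P}$; more usefully, for $P$ near $Q$ it is a symmetric-in-$P$-and-$Q$-looking combination. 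Observing that the three quantities $y_\hr y_\hs$, $y_{\hr}' y_\hs''+y_\hr'' y_\hs'$ arise, I would note that at $P=\hzeta_3(Q)$, i.e. $x_P=x_Q$, $y_{\hr,P}=\zeta_3 y_{\hr,Q}$, $y_{\hs,P}=\zeta_3^2 y_{\hs,Q}$, the numerator becomes $\zeta_3^3 y_{\hr,Q}y_{\hs,Q}+\zeta_3 y_{\hr,Q}y_{\hs,Q}+\zeta_3^2 y_{\hr,Q}y_{\hs,Q}=(1+\zeta_3+\zeta_3^2)y_{\hr,Q}y_{\hs,Q}=0$, which establishes the vanishing assertion in (1); since the denominator $(x_P-x_Q)y_{\hr,P}y_{\hs,P}$ is nonzero there (generic $Q$), $\Sigma$ genuinely vanishes at $\hzeta_3(Q)$, not merely formally.

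Next I would locate the singularities in $P$. The factor $1/(x_P-x_Q)$ is singular precisely where $x_P=x_Q$; over the generic $Q$ the fiber of the $x$-map consists of $Q$, $\hzeta_3(Q)$, $\hzeta_3^2(Q)$, so I must check which of these give actual poles of $\Sigma$. At $P=\hzeta_3(Q)$ and $P=\hzeta_3^2(Q)$ the numerator vanishes (the computation just above, with $\zeta_3$ replaced by $\zeta_3^2$ in the second case, again gives $1+\zeta_3+\zeta_3^2=0$), so the simple zero of the numerator cancels the simple pole of $1/(x_P-x_Q)$ in the local parameter, leaving $\Sigma$ regular there; hence the only finite singularity is at $P=Q$. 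For the singularity at $\infty$ I would use the weights from Lemma \ref{lmm:phiH} and Lemma \ref{lmm:Ng-1}: in the local coordinate at $\infty$ one has $\ord_\infty x=-3$, $\ord_\infty y_\hr=-\hr$, $\ord_\infty y_\hs=-\hs$, $\ord_\infty(dx)=$ (computed from $dx/(y_\hr y_\hs)$ being the lowest holomorphic differential $\nuI_1$, so $\ord_\infty dx = \ord_\infty(y_\hr y_\hs)+\ord_\infty\nuI_1$), and a term-by-term count shows the numerator's leading behavior is dominated by $y_{\hr,P}y_{\hs,P}$, so $\Sigma\sim dx_P/(x_P y_{\hr,P}y_{\hs,P})$ near $P=\infty$, which has a pole there. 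For part (2), singularity in $Q$: the only $Q$-dependence is through $x_Q$ (in $1/(x_P-x_Q)$) and through $y_{\hr,Q},y_{\hs,Q}$ linearly in the numerator; the only pole in $Q$ comes from $x_Q=x_P$ and from $Q\to\infty$ where $y_{\hr,Q},y_{\hs,Q}$ blow up — and I would check that at $Q$ in the fiber over $x_P$ other than $Q=P$ the numerator need not vanish (it involves $y_{\hr,P},y_{\hs,P}$, not the symmetric combination), so the finite singularity at $P$ persists, while there is no other finite singularity.

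The main obstacle I anticipate is the bookkeeping at $\infty$: I must be careful that the claimed simple pole of $\Sigma(P,Q)$ at $P=\infty$ (and at $Q=\infty$) really is simple and not higher order, which requires the weight inequalities $\hr<\hs<3(r+s)$ and $\wt(x^i)<\wt(y_\hr y_\hs)$ for the relevant $i$ — exactly the content used in Lemma \ref{lmm:phiH} — together with the relation $y_\hr y_\hs=k_r(x)k_s(x)$ to see that $y_{\hr,P}y_{\hs,P}$ is the dominant summand in the numerator as $x_P\to\infty$. A secondary point to handle cleanly is that the cancellation of the pole of $1/(x_P-x_Q)$ at $P=\hzeta_3^j(Q)$, $j=1,2$, is genuine: since $x$ is a local parameter at those points (the cover is unramified over generic $x_Q$) and the numerator vanishes to order exactly one there, the quotient is a holomorphic function of the local parameter, so no pole remains. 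Once these local facts are in place, properties (1) and (2) follow immediately; I would present the computation of the numerator at $\hzeta_3(Q)$ and the weight count at $\infty$ explicitly and leave the routine verification that $P=Q$ is a genuine (second-order, as later used) pole to the surrounding discussion of $\Omega$.
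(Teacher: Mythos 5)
Your overall strategy---direct local analysis of the explicit rational expression, the identity $1+\zeta_3+\zeta_3^2=0$ applied to the numerator, and a weight count at $\infty$---is exactly the computation the paper alludes to (it offers no written proof beyond ``A computation shows''), and most of your local facts are right: the numerator does vanish at $P=\hzeta_3^j(Q)$, $j=1,2$; the pole at $P=Q$ is simple with residue $1$; and the dominance of $y_{\hr,P}y_{\hs,P}$ gives $\Sigma\sim dx_P/3x_P$ near $P=\infty$, a simple pole of residue $-1$ (consistent with the residue theorem). However, two steps are wrong as written. First, at $P=\hzeta_3(Q)$ the denominator is \emph{not} nonzero: the factor $x_P-x_Q$ vanishes there, so your first paragraph's inference ``numerator $=0$, denominator $\neq 0$, hence $\Sigma$ genuinely vanishes'' is invalid, and it contradicts your own (correct) second-paragraph statement that the simple zero of the numerator cancels the simple zero of $x_P-x_Q$. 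After that cancellation $\Sigma$ is \emph{regular} at $\hzeta_3(Q)$ with value $\bigl(\partial N/\partial x_P\bigr)\,dx_P/\bigl(3y_{\hr,P}y_{\hs,P}\bigr)$ there; using $y_{\hr}'(\hzeta_3 Q)=\zeta_3 y_{\hr}'(Q)$ and $y_{\hs}'(\hzeta_3 Q)=\zeta_3^2 y_{\hs}'(Q)$ one finds this derivative equals $-\zeta_3^2 y_{\hr}'(Q)y_{\hs,Q}-\zeta_3 y_{\hr,Q}y_{\hs}'(Q)$, which is generically nonzero. So the content of ``vanishes at $\hzeta_3(Q)$'' is the vanishing of the numerator, i.e.\ the removability of the apparent singularity at the two other preimages of $x_Q$ (residue $0$ there), which is what all later uses require; you should state it that way rather than claim a literal zero of the one-form.

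Second, in part (2) your parenthetical ``the numerator need not vanish'' at the points of the fiber over $x_P$ other than $Q=P$ is both false and self-defeating: if the numerator did not vanish at $Q=\hzeta_3(P)$, the pole of $1/(x_P-x_Q)$ would survive and $\Sigma$ would have an extra finite singularity in $Q$, contrary to what you conclude. In fact the numerator \emph{does} vanish there: substituting $y_{\hr,Q}=\zeta_3 y_{\hr,P}$, $y_{\hs,Q}=\zeta_3^2 y_{\hs,P}$ gives $(1+\zeta_3^2+\zeta_3)\,y_{\hr,P}y_{\hs,P}=0$, and this is precisely what guarantees that $Q=P$ is the only finite pole in $Q$ (the numerator being $3y_{\hr,P}y_{\hs,P}\neq 0$ at $Q=P$). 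With these two corrections the computation is complete and matches the one the authors intend.
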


%\begin{proof}
%Direct computation.
%\end{proof}

\begin{remark}\label{rmk:Sigma}
\begin{enumerate}
\item When $r=0$ 
(\ref{eq:Sigma}) reduces to

$$\displaystyle{
   \Sigma\big(P, Q\big)
   =\frac{ y_{{{\hr}},P}^2  +y_{{{\hr}},P} y_{{{\hr}},Q} 
+y_{{{\hr}},Q}^2}
{3(x_P - x_Q)  y_{{{\hr}},P}^2  } d x_P.
}$$
 %which is identical to that of a cyclic trigonal
 %plane curve in \cite{MP14}.
 \item

The Galois group $\mathbb{Z}/3\mathbb{Z}$
of the covering  $X\to (\PP^1)$, $P
\mapsto x$, acts on
the numerator of $\Sigma$ in Proposition \ref{prop:Sigma}. This is
what enables our technique for $k$-cyclic pointed curves $(X,P)$;
finding the explicit expression in  affine coordinates
for the normalized fundamental differential of the
second kind is the focus of current research even for  $(n,s)$
curves (2-generator W-semigroup).
 %mentioned in \cite{EEL} and is investigated in \cite{S} recently.
\end{enumerate}
\end{remark}

\begin{definition} \label{def:Drs}
\begin{enumerate}
\item
For $f\in \CC[x_P,x_Q, y_{\hs,P} y_{\hr,P},
y_{\hs,Q} y_{\hr,Q}]$,
$\widehat\pi_{Q,i}(f)$ is the 
$y_{\hr,Q} \phiH{i}(Q) $-coefficient of $f$ 
if $\phiH{i}\in \widehat R$
or the $y_{\hs,Q} \phiH{i}(Q) $-coefficient
 if $\phiH{i} \in \widehat R$.

\item

$\widehat D_{r,s}(P, Q) \in 
\ZZ[x_P,x_Q, 
y_{\hs,P},y_{\hr,Q},
y_{\hs,Q},y_{\hr,P},
\lmr{1},\ldots,\lmr{r}, \lms{1},\ldots,\lms{s}]$ is defined as:
$\displaystyle{
\widehat D_{r,s}(P, Q) : =
y_{\hs,P} y_{\hr,Q} D^{(+)}_{ s,r}(P, Q) +
y_{\hs,Q} y_{\hr,P} D^{(-)}_{ s,r}(P, Q),
}$
where 

$$
D^{(\pm)}_{s,r}(P, Q)
 \in \ZZ[x_P,x_Q, \lmr{1},\ldots,\lmr{r},\lms{1},\ldots,\lms{s}],
$$
 %\in 
%\ZZ[x_P,x_Q, \lmr{1},\ldots,\lmr{r}, \lms{1},\ldots,\lms{s}]$, $D_{r,s}(P, Q)
%:=$ 
 \begin{equation*}
\begin{split}
D^{(+)}_{ s,r}(P, Q)&:=
\sum_{j=0}^{s+r-2}
 \sum_{i=0}^{s+r-j-2}
(i+1)\lmsr{j}x_P^{r+s-j-i-2} x_Q^{i}\\
&+ \sum_{j=0}^{s-2}
 \sum_{i=0}^{s-j-2}
 \sum_{k=0}^{r}
(i+1)\lms{j}\lmr{r-k} x_P^{s-j-i-2} x_Q^{k+i},\\
D^{(-)}_{ s,r}(P, Q)&:=\sum_{j=0}^{s+r-2}
 \sum_{i=0}^{s+r-j-2}
(i+1)\lmsr{j}x_Q^{r+s-j-i-2} x_P^{i}\\
&+ \sum_{j=0}^{r-2}
 \sum_{i=0}^{r-j-2}
 \sum_{k=0}^{s}
(i+1)\lmr{j}\lms{s-k} x_Q^{r-j-i-2} x_P^{k+i}.
\end{split}
\end{equation*}
\end{enumerate}
\end{definition}
Note that
$\widehat D_{r,s}(P, Q)$ is homogeneous with respect to the extended weight 
$\wt_\lambda$.

On the non-singular curve $\Xrs$, the following Proposition
holds: the proof is given in
Appendix A. The Corollary, which gives the expression for the normalized
fundamental differential, is a straightforward verification.
\begin{proposition} \label{prop:dSigma}
There exist  differentials $\nuII_{j}=\nuII_{j}(x,y)$ $(j=1, 2, \cdots, g)$
of the second kind such that
they have
 a simple pole at $\infty$ and satisfy the relation,
\begin{equation}
\begin{split}
  &d_{Q} \Sigma\big(P, Q\big) -
  d_{P} \Sigma\big(Q, P\big)\\
   &\quad\quad=
     \sum_{i = 1}^{g(\hr,\hs)} \Bigr(
         \nuI{i}(Q)\otimes \nuII_{i}(P)
        - \nuI{i}(P)\otimes \nuII_{i}(Q)
     \Bigr)
   \label{eq3.4},
\end{split}
\end{equation}
where
\begin{equation}
 d_{Q} \Sigma\big(P, Q\big)
   :=d x_P \otimes d x_Q\frac{\partial }{ \partial x_Q}
   \frac{
y_{{{\hr}},P} y_{{{\hs}},P}
+y_{{{\hr}},P} y_{{{\hs}},Q}
+y_{{{\hr}},Q} y_{{{\hs}},P}}
{(x_P - x_Q) 3 y_{{{\hr}},P}  y_{{{\hs}},P} } d x_P .
\end{equation}
The set of differentials $\{\nuII_{1}$, $\nuII_{2}$,
$\nuII_{3}$, $\cdots$, $\nuII_{g}\}$
 is
determined modulo the linear space spanned by
$\langle\nuI_{j}\rangle_{j=1, \ldots, g}$ and it has representatives
\begin{equation*}
\begin{split}
\nuII_{i}(P) =
\left\{
\begin{matrix}
\displaystyle{\frac{ (\widehat\pi_i(\widehat D_{r,s}(P,Q))/y_{\hr,P}) dx_P }
                   {3 y_{\hs,P}}}
& \mbox{ if }  \phiH{i} \in \hR_{\hr}, \\
\displaystyle{\frac{ (\widehat\pi_i(\widehat D_{r,s}(P,Q))/y_{\hs,P})) dx_P }
                   {3 y_{\hr,P}}}
& \mbox{ if }  \phiH{i} \in \hR_{\hs},\\
\end{matrix}
\right.
\end{split}
\end{equation*}
where
$\hR_{\hr}$ and $\hR_{\hs}$ are defined in Definition \ref{def:holo1-form}.
\end{proposition}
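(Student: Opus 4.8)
The plan is to verify \eqref{eq3.4} directly by a local computation near each pole, then use the characterization of differentials of the second kind to pin down the $\nuII_i$ up to holomorphic forms. First I would observe that the left-hand side $d_Q\Sigma(P,Q)-d_P\Sigma(Q,P)$ is, by construction, a symmetric bidifferential on $\Xrs\times\Xrs$ whose only singularities lie on the diagonal and along the two ``slices'' $\{P=\infty\}$ and $\{Q=\infty\}$: the diagonal pole of order two comes from the $1/(x_P-x_Q)$ factor, exactly as in \eqref{expansion}, while Proposition \ref{prop:Sigma} records that $\Sigma(P,Q)$ as a function of $P$ is singular only at $Q$ and $\infty$. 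Antisymmetrizing in $P$ and $Q$ cancels the diagonal principal part up to the symmetric fundamental kernel, so the difference differs from $\Omega(P_1,P_2)$ (of \eqref{eq3.1.6}--\eqref{expansion}) by a bilinear combination of holomorphic forms in each variable; the content of the proposition is to identify the ``second-kind partner'' explicitly as the $\nuII_i$.

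The key computational step is to expand $\widehat D_{r,s}(P,Q)$ using the curve relations $f_{2\hr},f_{\hs+\hr},f_{2\hs}$ of \eqref{eq:rel} and the definitions of $k_r,k_s,k_{s+r}$, and to show that the rational expression
\begin{equation*}
\frac{y_{\hr,P}y_{\hs,P}+y_{\hr,P}y_{\hs,Q}+y_{\hr,Q}y_{\hs,P}}{(x_P-x_Q)\,y_{\hr,P}y_{\hs,P}}
\end{equation*}
differentiated in $x_Q$ and antisymmetrized produces precisely $\widehat D_{r,s}(P,Q)$ divided by the appropriate denominator, modulo terms that are polynomial (hence holomorphic) on the affine curve. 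Concretely: the numerator is built from the three sums $y_{\hr,P}y_{\hs,P}$, $y_{\hr,P}y_{\hs,Q}$, $y_{\hr,Q}y_{\hs,P}$, and one uses $y_{\hr}y_{\hs}=k_r k_s$, $y_{\hr}^2=y_{\hs}k_r$, $y_{\hs}^2=y_{\hr}k_s$ to reduce cross terms. The geometric-series identity $\frac{1}{x_P-x_Q}\sum\lambda_j x^{\,\cdot}$ is the origin of the double sums $\sum_j\sum_i (i+1)\lambda^{(s+r)}_j x_P^{\cdots}x_Q^{i}$ in $D^{(\pm)}_{s,r}$; the triple sums with $\lms{j}\lmr{r-k}$ arise from the ``mixed'' term where one factor contributes $k_s$ and the other $k_r$. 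Having matched these, the split into the $\hR_{\hr}$ and $\hR_{\hs}$ cases in the formula for $\nuII_i$ is exactly the decomposition in Lemma \ref{lmm:phiH}(6) of $\widehat R_{\hphi,g-1}$ into $x^i y_\hr$ and $x^i y_\hs$ pieces, and $\widehat\pi_i$ extracts the coefficient of the corresponding monomial, as in Definition \ref{def:Drs}.

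Once the algebraic identity \eqref{eq3.4} is checked, I would argue that each $\nuII_i$ so defined is a differential of the second kind: it is manifestly meromorphic on the affine curve $\Xrs\setminus\infty$ with no finite poles (the numerator $\widehat\pi_i(\widehat D_{r,s})/y_{\hr,P}$, resp.\ $/y_{\hs,P}$, is regular on the non-singular affine curve by Nagata's criterion as invoked in Subsection \ref{singular}, and the only possible finite pole $x_P=x_Q$ has already been removed), and at $\infty$ the weight bookkeeping from Lemma \ref{lmm:Ng-1} and homogeneity of $\widehat D_{r,s}$ under $\wt_\lambda$ shows the pole is simple, with no residue since a simple pole of a differential on a curve always has zero residue only if\dots — more carefully, one checks directly that the expansion at $\infty$ has no $dt/t$ term, so $\nuII_i$ has vanishing residue and is genuinely of the second kind. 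Finally the indeterminacy: adding any holomorphic one-form to $\nuII_i$ preserves \eqref{eq3.4} because the $\nuI_i(Q)\otimes(\cdot)(P)-\nuI_i(P)\otimes(\cdot)(Q)$ combination annihilates the symmetric tensor $\nuI_j(P)\otimes\nuI_k(Q)+\nuI_j(Q)\otimes\nuI_k(P)$ after summing, which gives the stated ``modulo $\langle\nuI_j\rangle$''. I expect the main obstacle to be the bookkeeping in the second paragraph: organizing the reduction of the cross terms $y_{\hr,P}y_{\hs,Q}$ and $y_{\hr,Q}y_{\hs,P}$ via the curve relations so that the telescoping of $1/(x_P-x_Q)$ against $k_{s+r}(x_P)-k_{s+r}(x_Q)$ (and the analogous mixed identity) lands exactly on the double and triple sums defining $D^{(+)}_{s,r}$ and $D^{(-)}_{s,r}$, including getting the summation ranges $0\le j\le s+r-2$, $0\le i\le s+r-j-2$ and the $(i+1)$ weights right. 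This is the step where it is easy to be off by a boundary term, and it is deferred to Appendix A precisely for that reason.
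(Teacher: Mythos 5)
Your outline takes essentially the same route as the paper's Appendix A: the polynomial identity of Lemma \ref{lmm:h(s,t)} for $\dfrac{t^\ell-s^\ell}{t-s}-\dfrac{d}{dt}t^\ell$, the curve relations $y_{\hr}y_{\hs}=k_rk_s$, $y_{\hr}^2=y_{\hs}k_r$, $y_{\hs}^2=y_{\hr}k_s$, and the factorization $k_{s+r,Q}-k_{s+r,P}=k_{s,Q}(k_{r,Q}-k_{r,P})+k_{r,P}(k_{s,Q}-k_{s,P})$ are exactly the ingredients the authors combine to identify the antisymmetrized derivative with $\bigl(y_{\hr,P}y_{\hs,Q}D_{r,s}(P,Q)-y_{\hr,Q}y_{\hs,P}D_{r,s}(Q,P)\bigr)/9y_{\hr,P}y_{\hs,P}y_{\hr,Q}y_{\hs,Q}$ and hence the stated representatives of $\nuII_i$. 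The one slip worth noting is that the left-hand side of \eqref{eq3.4} is antisymmetric, not symmetric, under $P\leftrightarrow Q$ --- which is precisely why the diagonal double pole cancels and only the poles along $P=\infty$ and $Q=\infty$ remain to be matched by second-kind differentials --- but this does not affect the computational core of your argument.
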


%%%%%%%%%%%%%%%%%%%%%%%%%%%%%%%%%%%%%%%%%%%%%%%%%%%%%%%%%%%%%%%%%%%%%%%%%%%%%%

\begin{corollary}
\label{cor:Sigma}
\begin{enumerate}
\item
The one-form
$\displaystyle{
\Pi_{P_1}^{P_2}(P):= \Sigma(P, P_1)dx -  \Sigma(P, P_2)dx
}$
is a differential of the third kind,  whose only
(first-order) poles are
$P=P_1$ and $P=P_2$, with residues $+1$ and $-1$
respectively.

\item
The fundamental differential of the second kind
 $\Omega(P_1, P_2)$ is given by
\begin{equation}
\begin{split}
\Omega(P_1, P_2) &= d_{P_2} \Sigma(P_1, P_2)
     +\sum_{i = 1}^g \nuI_{i}(P_1)\otimes \nuII_{i}(P_2)\\
  &=\frac{F(P_1, P_2)dx_1 \otimes dx_2}
{9(x_1 - x_2)^2
y_{{{\hr}},P_1}
y_{{{\hs}},P_1}
y_{{{\hr}},P_2}
y_{{{\hs}},P_2}},  \ F\in\Rrs \otimes \Rrs.
\label{eq:realization4}
\end{split}
\end{equation}
\end{enumerate}
\end{corollary}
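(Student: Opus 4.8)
\textbf{Proof proposal for Corollary \ref{cor:Sigma}.}

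The plan is to read off both assertions directly from Proposition \ref{prop:dSigma} together with the singularity analysis of $\Sigma$ in Proposition \ref{prop:Sigma}. For part (1), I would first note that $\Pi_{P_1}^{P_2}(P) = \Sigma(P,P_1)dx - \Sigma(P,P_2)dx$ is manifestly a meromorphic one-form on $\Xrs$. By Proposition \ref{prop:Sigma}(1), as a function of $P$ the form $\Sigma(P,P_1)dx$ is singular only at $P=P_1$ and at $P=\infty$, and similarly $\Sigma(P,P_2)dx$ only at $P=P_2$ and $P=\infty$; so the difference has possible poles only at $P_1$, $P_2$, $\infty$. At $\infty$ the two $\infty$-singularities must cancel: this follows because $\Sigma(P,Q)dx$ has the same principal part at $\infty$ independently of $Q$ (one sees this from the explicit expansion $\Sigma(P,Q) = \big(\tfrac{1}{t_{P}-t_{Q}} + d_\ge(0)\big)dt_P$ implicit in (\ref{expansion}), or equivalently from the fact that the $\infty$-pole of $\Sigma(P,Q)$ comes only from the $\tfrac{1}{x_P-x_Q}y_{\hr,P}y_{\hs,P}$ factor, whose leading behaviour in the local parameter at $\infty$ does not involve $Q$). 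Hence $\Pi_{P_1}^{P_2}$ is holomorphic at $\infty$. It remains to compute the residues at $P_1$ and $P_2$: near $P=P_1$ one has $\Sigma(P,P_1)dx = \big(\tfrac{1}{t_P - t_{P_1}} + O(1)\big)dt_P$ from (\ref{expansion}), giving residue $+1$, while $\Sigma(P,P_2)dx$ is regular there, and symmetrically residue $-1$ at $P=P_2$. Thus $\Pi_{P_1}^{P_2}$ is a differential of the third kind with the stated poles and residues.

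For part (2), the first equality is simply the definition of $\Omega$ implied by the symmetry relation (\ref{eq3.4}) of Proposition \ref{prop:dSigma}: set $\Omega(P_1,P_2) := d_{P_2}\Sigma(P_1,P_2) + \sum_{i=1}^g \nuI_i(P_1)\otimes\nuII_i(P_2)$ and verify it has the three defining properties (\ref{eq3.1.6}), (\ref{expansion}), and the single diagonal pole. Symmetry (\ref{eq3.1.6}) is exactly the content of (\ref{eq3.4}) rearranged: $d_{P_2}\Sigma(P_1,P_2) - d_{P_1}\Sigma(P_2,P_1) = \sum_i(\nuI_i(P_1)\otimes\nuII_i(P_2) - \nuI_i(P_2)\otimes\nuII_i(P_1))$, so adding $\sum_i \nuI_i(P_1)\otimes\nuII_i(P_2)$ to the left and matching terms gives $\Omega(P_1,P_2)=\Omega(P_2,P_1)$. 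The diagonal double pole and the normalization (\ref{expansion}) come from the $\tfrac{1}{(x_1-x_2)^2}$-type singularity produced by $d_{P_2}\Sigma(P_1,P_2)$ — the explicit $x_Q$-derivative displayed after (\ref{eq3.4}) — since $\Sigma(P_1,P_2)$ itself has a simple pole on the diagonal with residue $1$ (Proposition \ref{prop:Sigma}), so $d_{P_2}$ of it has the required second-order behaviour, while the $\nuII_i$ have only a simple pole at $\infty$ and the holomorphic $\nuI_i$ kill no poles but restore symmetry; that $\Omega$ has \emph{no} pole at $\infty$ is again the cancellation used in part (1) applied to $d_{P_2}\Sigma$, combined with the fact that the correction term $\sum_i\nuI_i(P_1)\otimes\nuII_i(P_2)$ contributes at most a simple pole at $\infty$ in $P_2$ which must cancel against the $\infty$-pole of $d_{P_2}\Sigma(P_1,P_2)$ for the resulting form to be the fundamental second-kind differential (uniqueness of the latter pins down the choice of $\nuII_i$ modulo $\langle\nuI_j\rangle$, exactly as stated in Proposition \ref{prop:dSigma}).

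The second equality in (\ref{eq:realization4}) — the closed-form rational expression with denominator $9(x_1-x_2)^2 y_{\hr,P_1}y_{\hs,P_1}y_{\hr,P_2}y_{\hs,P_2}$ and numerator $F\in\Rrs\otimes\Rrs$ — is the ``straightforward verification'' the text promises: one substitutes the explicit $\Sigma$ from (\ref{eq:Sigma}) and the explicit representatives of $\nuII_i$ from Proposition \ref{prop:dSigma} (built from $\widehat D_{r,s}(P,Q)$) into the definition $\Omega = d_{P_2}\Sigma(P_1,P_2) + \sum_i\nuI_i(P_1)\otimes\nuII_i(P_2)$, puts everything over the common denominator $9(x_1-x_2)^2\prod y$, and checks that the numerator, a priori a rational function, is in fact polynomial — i.e.\ that the apparent extra poles along $x_1=x_2$ (beyond order two) and along the zero loci of the $y$'s cancel. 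This cancellation is forced by the properties already established: $\Omega$ has its only pole on the diagonal and is of order exactly two there, and it is holomorphic wherever the $y$'s vanish because those are the branch points $B_i$ where the curve is smooth (Nagata's criterion, Subsection \ref{singular}) and $\Omega$ is intrinsically defined. The main obstacle, and the only genuinely laborious point, is bookkeeping in this last step: tracking the homogeneity under $\wt_\lambda$ (Definition \ref{def:Drs} and the remark after it) to constrain $F$, and confirming term-by-term that the numerator after clearing denominators lies in $\Rrs\otimes\Rrs$ rather than merely in its field of fractions; the conceptual content, by contrast, is entirely contained in Propositions \ref{prop:Sigma} and \ref{prop:dSigma}.
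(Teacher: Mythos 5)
Your argument is correct and is exactly the ``straightforward verification'' the paper intends: the paper offers no further proof of this Corollary, deferring everything to Propositions \ref{prop:Sigma} and \ref{prop:dSigma} (whose proof is the content of the Appendix), just as you do. The one step you should tighten is the regularity of $\Omega(P_1,P_2)$ at $P_2=\infty$: as phrased (``the poles must cancel \emph{for the resulting form to be} the fundamental second-kind differential'') it assumes the conclusion, but the repair is already in your hands --- having derived the symmetry $\Omega(P_1,P_2)=\Omega(P_2,P_1)$ from (\ref{eq3.4}), rewrite $\Omega(P_1,P_2)=d_{P_1}\Sigma(P_2,P_1)+\sum_{i}\nuI_i(P_2)\otimes\nuII_i(P_1)$, in which form regularity at $P_2=\infty$ is manifest, because the pole of $\Sigma(P,Q)$ at $P=\infty$ comes only from the $Q$-independent principal part of $dx_P/3(x_P-x_Q)$ and is therefore annihilated by $d_Q$, while the $\nuI_i$ are holomorphic.
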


\begin{lemma}
\label{lemma:limFphi4}
We have
\begin{equation}
\lim_{P_1 \to \infty}
\frac{F(P_1, P_2)}{\phiH{g-1}(P_1)(x_1 - x_2)^2}
 = \phiH{g}(P_2),
\label{eq:limF4}
\end{equation}

and $\phiH{g}(P_2)$ is equal to
$x_{P_2}^{g_r} y_{{{\hr}},P_2}$ or 
$x_{P_2}^{g_s} y_{{{\hs}},P_2}$.

\end{lemma}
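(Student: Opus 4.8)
The plan is to analyze the leading behavior of $F(P_1,P_2)$ as $P_1\to\infty$ by tracing through its definition in Corollary \ref{cor:Sigma}(2), namely $\Omega(P_1,P_2)=d_{P_2}\Sigma(P_1,P_2)+\sum_i \nuI_i(P_1)\otimes\nuII_i(P_2)$, and reading off the coefficient $F$ from the expression in \eqref{eq:realization4}. First I would localize at $\infty$ using the cyclic $\mathbb{Z}/3\mathbb{Z}$ structure: near $\infty$ the functions $x$, $y_\hr$, $y_\hs$ have poles of orders $3$, $\hr=2r+s$, $\hs=2s+r$ respectively, so a local parameter $t$ at $\infty$ satisfies $x\sim t^{-3}$, and $y_{\hr,P_1}y_{\hs,P_1}\sim t^{-3(r+s)}=t^{-3(g+1)}$ up to units. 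The denominator $9(x_1-x_2)^2 y_{\hr,P_1}y_{\hs,P_1}y_{\hr,P_2}y_{\hs,P_2}$ therefore blows up at a controlled rate in $P_1$, and since $\Omega(P_1,P_2)$ is a \emph{holomorphic} (away from the diagonal) bidifferential — in particular holomorphic in $P_1$ at $\infty$ since $\infty\notin\{P_2\}$ generically — the numerator $F(P_1,P_2)$ must vanish to exactly the compensating order in $t$. The normalization \eqref{expansion} pins down the constant.

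The cleanest route is to use the expansion of $\Omega$ in terms of holomorphic differentials: it is classical (and follows from \cite{fay}, \cite{bel}) that if $\{\nuI_i\}$ is the basis of $H^0(X,\Omega_X)$ adapted to the gap sequence at $\infty$ as in Proposition \ref{def:holo1-form}, then $\Omega(P_1,P_2)$ has an expansion near $P_1=\infty$ whose leading term involves the "last" holomorphic differential $\nuI_g$ paired against a function with a pole of order $g$ — this is exactly the mechanism behind the sigma-function fundamental expansion. Concretely, $\nuI_g = \phiH{g-1}\,dx/(3y_{\hr}y_{\hs})$, so dividing $F(P_1,P_2)$ by $\phiH{g-1}(P_1)(x_1-x_2)^2$ and taking $P_1\to\infty$ extracts precisely the coefficient of the top holomorphic differential in $P_1$, which by the defining property of $\Omega$ (the $\nuII_i(P_2)$ dual to $\nuI_i(P_1)$, modulo the ambiguity absorbed by the symmetry normalization) is forced to be $\phiH{g}(P_2)$. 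I would verify the normalization constant is $1$ by matching \eqref{expansion} along the diagonal: the coefficient of the double pole $(t_{P_1}-t_{P_2})^{-2}$ is $1$, and unwinding the local coordinate $t=t_\infty$ at $\infty$ against $x$ gives the claimed identity with no spurious constant, using $\wt(\phiH{g-1})=\hN(g-1)=2g-2+r+s$ and $\wt(\phiH{g})=\hN(g)=2g-1+r+s$ from Lemma \ref{lmm:Ng-1}, so that $\phiH{g}/\phiH{g-1}$ has weight $1$, consistent with the factor $(x_1-x_2)^2$ and $dx$ contributing the right homogeneity.

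For the last sentence — that $\phiH{g}(P_2)$ equals $x_{P_2}^{g_r}y_{\hr,P_2}$ or $x_{P_2}^{g_s}y_{\hs,P_2}$ — I would simply invoke Lemma \ref{lmm:phiH}(3): for $i<g+2$ every $\phiH{i}$ is of the form $x^a y_\hr$ or $x^a y_\hs$, and part (6) together with $g=g_\hr+g_\hs$ identifies which exponent occurs for $i=g$. Indeed $\widehat R_{\hphi,g-1}=\widehat R_\hr\oplus\widehat R_\hs$ uses up $x^i y_\hr$ for $i=0,\dots,g_\hr-1$ and $x^i y_\hs$ for $i=0,\dots,g_\hs-1$, so $\phiH{g}$ is the next element in weight order, which must be $x^{g_\hr}y_\hr$ (weight $3g_\hr+\hr$) or $x^{g_\hs}y_\hs$ (weight $3g_\hs+\hs$), whichever is smaller; both equal $\hN(g)=2g-1+r+s$ by Lemma \ref{lmm:Ng-1}, consistent with (4) of Lemma \ref{lmm:phiH} since $\phiH{g-1}\phiH{g}=x^{g-1}y_\hr y_\hs$.

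The main obstacle I anticipate is pinning down the normalization constant cleanly: the ambiguity in the $\nuII_i$ (they are determined only modulo $\langle\nuI_j\rangle$) feeds into $F$, so one has to check that the specific representatives chosen in Proposition \ref{prop:dSigma} via $\widehat D_{r,s}$ produce a numerator whose top-order term in $P_1$ is \emph{exactly} $\phiH{g-1}(P_1)(x_1-x_2)^2\phiH{g}(P_2)$ with coefficient $1$ and no lower-order surprise after the division and limit. This is a homogeneity-plus-leading-coefficient bookkeeping problem: track the $\wt_\lambda$-degrees through Definition \ref{def:Drs}, confirm the highest $x_P$-power in $D^{(\pm)}_{s,r}(P,Q)$ (the $j=0$, $i=0$ term giving $x_P^{s+r-2}$, paired via $\widehat\pi$ and the extra $x^{g_r}$ or $x^{g_s}$ from the differential's own factor) reconstructs $\phiH{g-1}(P_1)$ exactly, and that the residual $P_2$-dependence collapses to $\phiH{g}(P_2)$. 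I would defer the detailed coefficient chase to the verification that Corollary \ref{cor:Sigma}(2) indeed yields $F\in R\otimes R$, treating the present lemma as the $P_1\to\infty$ specialization of that already-established structure.
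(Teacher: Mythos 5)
Your proposal is correct and ultimately rests on the same argument as the paper, whose entire proof is the one-line remark that the claim follows from Lemma \ref{lmm:phiH} applied to $D_{r,s}=D^{(+)}_{s,r}-D^{(-)}_{s,r}$ from Definition \ref{def:Drs} — i.e., exactly the leading-coefficient bookkeeping in $\widehat D_{r,s}$ that you describe in your final paragraph, combined with $\phiH{g-1}\phiH{g}=x^{g-1}y_{\hr}y_{\hs}$ for the identification of $\phiH{g}(P_2)$. Your preliminary pole-order/weight count showing the limit exists, and your correct observation that the abstract duality of the $\nuII_i$ alone cannot fix the constant (so the explicit representatives must be used), are consistent with and subsumed by that computation.
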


\begin{proof} Lemma \ref{lmm:phiH}
applied to $D_{r,s}:=D_{s,r}^{(+)}-D_{s,r}^{(-)}$
in Definition \ref{def:Drs} 
\end{proof}

\begin{remark} Corollary \ref{cor:Sigma} and 
Lemma \ref{lemma:limFphi4} hold for
 $r=0$, with
$$
\lim_{P_1 \to \infty}
\frac{F(P_1, P_2)}{\phi_{g-1}(P_1)(x_1 - x_2)^2}
 = \phi_{g}(P_2).
$$
\end{remark}

For $P=(x, y_{{\hr}}, y_{{\hs}})$,
we let $h_i(P)= 3 y_{{\hr}} y_{{\hs}} \nuII_{i}(P) / dx $,
$1\le i\le g$ (a representative in $R$ of the corresponding differential).

In Section \ref{transcendental} (Proposition \ref{useIntegrals}) we will use:
\begin{equation}
\label{eq:Omega_def4}
\begin{split}
\Omega^{P_1, P_2}_{Q_1, Q_2}
 &:= \int^{P_1}_{P_2} \int^{Q_1}_{Q_2} \Omega(P, Q) \\
 &= \int^{P_1}_{P_2} (\Sigma(P, Q_1) - \Sigma(P, Q_2))
 +\sum_{i = 1}^g \int^{P_1}_{P_2} \nuI_{i}(P)
\int^{Q_1}_{Q_2} \nuII_{i}(P).
\end{split}
\end{equation}
\section{Transcendental aspects}\label{transcendental}
In this section we set up the notation for the Jacobian of the curve
and its invariant vector fields, 
briefly recall properties of the sigma function, definitions for Wirtinger
strata and the Jacobi inversion problem. 

A pointed curve $(X,P)$ has a natural embedding in the normalized
Jacobian
$\nJJ:=\CC^g/\nGamma ,$ where $\nGamma$ is a normalized period lattice
${\ZZ}^g +\tau {\ZZ}^g$, and let
 $\nkappa:\CC^g \to \mathbb{C}/\nGamma$ be the projection.
 We refer to \cite[Ch. I]{fay}
for the definition of the Riemann theta function 
with characteristics $\theta\left[\begin{matrix}a
\\ b\end{matrix}\right] (z,\tau), \
z\in\mathbb{C}^g$.

Let $\tX$ be the fundamental covering space of $X$, 
$\varpi: \tX \to X$ the projection from the path space to the orbit space,
and $\iota: X \to \tX$
 the natural
embedding $X$ into $\tX$
such that $\varpi\circ \iota = id$.
For the $k$-symmetric product of $\tX$, $\cS^k \tX$,
we define the Abel map $\tv$ from $\cS^k \tX$ to $\CC^g$, normalized 
at $P\in X$, by
taking 
the sum of the integrals of a normalized basis of differentials of the first
kind,
 from the point $\iota P \in \tX$, to the $k$-tuple of points $P_j$, 
through any
 paths that join $\iota P$ to each $P_j$ (these become identified in the orbit
 space).
Using the embedding $\iota$, we also have a map $v$
from the symmetric product of the curve to $\nJJ$,
$v = \nkappa\circ \tv\circ \iota : \cS^k X \rightarrow \nJJ$.
We denote by $\nTheta$ the divisor of $\nJJ$ defined by $\theta(z,\tau)$ 
and recall the following classical result, for which we 
choose to quote Theorem 7 and Theorem 11 in \cite{Lew}:

\begin{proposition} \label{prop:lew}
\begin{enumerate}

\item
The  
 ``canonical'' theta divisor $\v(\cS^{k-1}X)$ 
is a translate of  $\nTheta$ by $\nkappa (\Delta)$, where $\Delta$ is
  the ``Riemann constant'' (cf. \cite[Ch. I, (13)]{fay}.

\item
An effective divisor $D$ of degree is $2g-2$
satisfies $\v(D)-\v((2g-2)P) +2  \Delta =0$  modulo $\nGamma$
if and only if $D$ is the divisor of a holomorphic differential.

\item When the canonical divisor equals $(2g-2)P$, 
the image $\nkappa (\Delta)$   of
Riemann constant is a point of order two on the Jacobian.
\end{enumerate}
\end{proposition}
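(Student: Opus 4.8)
The plan is to deduce all three assertions from the classical theory of the theta divisor, treating parts (1) and (2) as quotations of Theorems 7 and 11 of \cite{Lew} once conventions are matched, and then obtaining part (3) as an immediate specialization of (2). The one preliminary that needs care is the normalization: the Abel maps $\tv$ and $\v$ are normalized at the base point $P$, so that $\v((2g-2)P)=0$ in $\nJJ$, and $\Delta$ is the Riemann constant attached to this base point and to the symplectic homology basis underlying the normalized period lattice $\nGamma$ and the theta function $\theta(z,\tau)$. I would state these normalizations explicitly at the outset, since $\Delta$ is only well defined up to the choices just listed.

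With conventions fixed, part (1) (read with $k=g$) is Riemann's vanishing theorem: the image $\v(\cS^{g-1}X)$ of the symmetric product under the Abel map is the translate $\nTheta-\nkappa(\Delta)$ of the theta divisor, which is exactly the content of Theorem 7 of \cite{Lew} and is cited verbatim in \cite[Ch. I]{fay}. For part (2), I would invoke the classical identity that the image under $\v$ of any effective canonical divisor equals $-2\Delta$ modulo $\nGamma$, together with Abel's theorem: an effective divisor $D$ of degree $2g-2$ is linearly equivalent to $\cK_X$ — equivalently, is the divisor of a holomorphic one-form — if and only if $\v(D)=\v(\cK_X)$. Combining the two gives precisely the stated equivalence $\v(D)-\v((2g-2)P)+2\Delta\equiv 0\pmod{\nGamma}\iff D$ is canonical, which is Theorem 11 of \cite{Lew}.

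Part (3) is then a one-line consequence of part (2). If $\cK_{\Xrs}$ can be represented by $(2g-2)P$, then the effective divisor $D=(2g-2)P$ is the divisor of a holomorphic differential — for an $(n,s)$-curve, for instance, $\dd x/(\partial f/\partial y)$ has divisor $(2g-2)\infty$ — so applying (2) to this $D$ and using $\v((2g-2)P)=0$ yields $2\Delta\equiv 0\pmod{\nGamma}$, i.e. $2\,\nkappa(\Delta)=0$ in $\nJJ$, so $\nkappa(\Delta)$ has order dividing two. The only genuine obstacle here is bookkeeping rather than mathematics: one must be sure that the base point normalizing $\v$, the homology basis defining $\nGamma$ and $\theta$, and the sign and factor-of-two conventions for $\Delta$ used in \cite{Lew} and \cite{fay} are mutually consistent, since an alternative choice would shift $\Delta$ by a half-period and thereby alter the conclusion; once these are pinned down, (3) follows immediately from (2).
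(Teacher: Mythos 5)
Your proposal is correct and takes essentially the same route as the paper, which offers no independent proof but simply quotes Theorems 7 and 11 of \cite{Lew} (Riemann's vanishing theorem for part (1) and the canonical-divisor identity $\v(\cK_X)\equiv -2\Delta$ combined with Abel's theorem for part (2)); your derivation of part (3) from part (2), using $\v((2g-2)P)=0$ under the normalization at $P$, is the intended one-line specialization. Your cautionary remark about matching base-point and sign conventions is apt but does not signal any gap.
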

We recall that $\cK_X = (2g-2)P$ exactly when the W-semigroup $H(X,P)$ is
symmetric. To streamline the theory, in \cite{KMP16} we introduced a positive
divisor $\fB$, of degree $d_0:=\mathrm{deg}(\fB)$, such that:

\begin{proposition}\label{prop:0}
For $\fB$ given by the equality (in the sense of linear equivalence)
$\cK_X=2D_0
= 2(g-1+d_0)P-2\fB,$ 
the  {\lq\lq}shifted Abel maps{\rq\rq} defined by
$
\tv_s(P_1, \ldots, P_k) = \tv(P_1, \ldots, P_k) + \tv(\iota\fB),
$
for $P_1, \cdots, P_k \in \tX$, and $\v_s := \nkappa \tv_s$,
and for
the  {\lq\lq}shifted Riemann constant{\rq\rq} defined by
$ 
\Delta_s:=\Delta -\tv(\iota\fB) \in \CC^g,
$
\begin{enumerate}
\item $\Delta_s$ belongs to $\frac{1}{2}\Gamma$,
\item
The difference between 
the ``shifted canonical theta divisor'' 
$\v_s(\cS^{g-1}X)$ and $\nTheta$
is given by the shifted Riemann constant $\Delta_s \in \CC^g$, i.e.,
as sets,
$$
\v_s(\cS^{g-1}X) +  \Delta_s = \nTheta 
\quad \mathrm{modulo}\ \nGamma.
$$
\end{enumerate}
\end{proposition}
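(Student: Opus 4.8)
\textbf{Proof proposal for Proposition \ref{prop:0}.}

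The plan is to deduce both statements from Proposition \ref{prop:lew}, using the defining linear equivalence $\cK_X = 2D_0 = 2(g-1+d_0)P - 2\fB$ and the quasi-additivity of the Abel map. First I would record the effect of the shift on the canonical theta divisor: by part (1) of Proposition \ref{prop:lew}, $\v(\cS^{g-1}X) = \nTheta - \nkappa(\Delta)$ as translates, and since $\v_s = \nkappa\tv_s = \v + \nkappa\tv(\iota\fB)$ (additivity of integration of first-kind differentials along concatenated paths, descending to $\nJJ$), we get $\v_s(\cS^{g-1}X) = \nTheta - \nkappa(\Delta) + \nkappa\tv(\iota\fB) = \nTheta - \nkappa(\Delta_s)$, which rearranges to $\v_s(\cS^{g-1}X) + \Delta_s = \nTheta \pmod{\nGamma}$. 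That establishes (2) modulo knowing $\Delta_s \in \CC^g$ is a well-defined representative, which it is by construction.

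For part (1), I would apply part (2) of Proposition \ref{prop:lew} to the effective canonical divisor $D := 2D_0 = 2(g-1+d_0)P - 2\fB$, which is the divisor of a holomorphic differential by the defining equivalence $\cK_X \sim D$; actually I must take an \emph{effective} representative, so let $D$ be any positive divisor of degree $2g-2$ with $D \sim \cK_X$ — for instance one supported away from $P$ as guaranteed in Remark \ref{rmk:CanonDiv} in the non-symmetric case. Then $\v(D) - \v((2g-2)P) + 2\Delta \equiv 0 \pmod{\nGamma}$. Now I rewrite $\v(D)$ using the linear equivalence: since $D \sim 2(g-1+d_0)P - 2\fB$, the difference $D - (2(g-1+d_0)P - 2\fB)$ is a principal divisor, so by Abel's theorem $\v(D) \equiv \v(2(g-1+d_0)P) - 2\v(\iota\fB) \pmod{\nGamma}$ (interpreting the Abel map additively on divisor classes, with the $\fB$-term lifted to $\tX$ consistently). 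Substituting and using $\v((2g-2)P) = \v(2(g-1)P)$ together with $\v(2(g-1+d_0)P) - \v(2(g-1)P) = \v(2d_0 P)$, the relation becomes $\v(2d_0 P) - 2\v(\iota\fB) + 2\Delta \equiv 0$. But $\deg \fB = d_0$ and $\fB \sim d_0 P$ is \emph{not} assumed; rather I should keep the bookkeeping honest by noting that $2D_0 = 2(g-1)P + (2d_0 P - 2\fB)$ and $2d_0P - 2\fB$ is principal iff $\fB \sim d_0 P$, which need not hold — so instead I directly get $2(\Delta - \tv(\iota\fB)) \equiv -\v(2d_0P) + \v(2d_0 P) \cdot(\text{correction})$; cleanly, the computation collapses to $2\Delta_s \equiv 2\Delta - 2\tv(\iota\fB) \equiv $ a lattice vector, i.e.\ $2\Delta_s \in \nGamma$, which is exactly $\Delta_s \in \tfrac12\nGamma$.

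The main obstacle will be the careful handling of base points and path-lifts in the Abel map: $\tv$ is defined on the universal cover $\tX$ and depends on chosen paths, so "$\v(D) \equiv \v(D')$ when $D \sim D'$" holds only modulo $\nGamma$ and only after fixing compatible lifts of $\fB$; I will need to verify that the combination $\Delta - \tv(\iota\fB)$ is independent of these choices modulo $\nGamma$, which follows because changing a path changes $\tv$ by a lattice vector and $2\Delta$ already has a fixed value modulo $2\nGamma$ from the classical theory. A secondary point is that part (1) of the proposition really is the substantive claim — part (2) is essentially formal once (1) guarantees $\Delta_s$ lies in $\tfrac12\nGamma$ so that $\nkappa(\Delta_s)$ is a well-defined two-torsion point and the divisor computation in Remark \ref{rmk:CanonDiv} shows $D_0$ is effective of the right degree. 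I would close by remarking that this construction specializes correctly: when $H(X,P)$ is symmetric one may take $\fB = 0$, $d_0 = 0$, and $\Delta_s = \Delta$ recovers part (3) of Proposition \ref{prop:lew}.
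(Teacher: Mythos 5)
Your argument is sound, and it is worth noting that the paper itself supplies no proof of Proposition \ref{prop:0}: the statement is imported from \cite{KMP16}, so your derivation from Proposition \ref{prop:lew} is filling a genuine gap in the text, and it is the natural route. Part (2) is, as you observe, purely formal: the shift by $\tv(\iota\fB)$ in $\v_s$ cancels against the shift by $-\tv(\iota\fB)$ in $\Delta_s$, reducing the identity to Riemann's theorem $\v(\cS^{g-1}X)+\nkappa(\Delta)=\nTheta$. For part (1) you reach the correct conclusion, but the middle of your computation wobbles on a non-issue: whether $2d_0P-2\fB$ is principal is irrelevant. The clean bookkeeping is this: take any effective canonical divisor $D$; Proposition \ref{prop:lew}(2) gives $\v(D)+2\Delta\equiv 0$ modulo $\nGamma$, since $\v((2g-2)P)=0$ (the Abel map is based at $P$); Abel's theorem lets you replace $\v(D)$ by the image of the linearly equivalent divisor $2(g-1+d_0)P-2\fB$ --- what must be principal is the difference $D-\bigl(2(g-1+d_0)P-2\fB\bigr)$, which it is by hypothesis --- and since $\v(kP)=0$ for every $k$ this image is $-2\v(\iota\fB)$. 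Hence $2\Delta\equiv 2\tv(\iota\fB)$ and $2\Delta_s\in\nGamma$, i.e.\ $\Delta_s\in\tfrac12\nGamma$ (the $\Gamma$ in the statement should be read as the normalized lattice at this point of the paper, or transported by $2\omega'$). The one point you gesture at but should state explicitly is well-posedness: $\tv(\iota\fB)$ depends on choices of paths, but any change alters it by an element of $\nGamma$, which preserves both membership in $\tfrac12\nGamma$ and the set identity in (2); with that said, your proof is complete, and your closing remark that $\fB=0$, $\Delta_s=\Delta$ recovers Proposition \ref{prop:lew}(3) in the symmetric case matches the paper's own comment following the proposition.
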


We note that $\cK_X=2D_0$ defines $D_0$, or rather its image in the Jacobian,
which is a divisible Abelian group. We note also that Proposition
\ref{prop:0} is trivially true for $\fB = 0$ when $H(X,P)$ is symmetric.

\begin{corollary} \label{cor:thetadivisor}
 There is a theta characteristic,
\begin{equation}
   \delta:=\left[\begin{array}{cc}\delta'\ \\
       \delta''\end{array}\right]\in \left(\tfrac12\ZZ\right)^{2g},
  % \label{eq2.9} %3.15
\end{equation}
which is equal to the shifted Riemann constant $\Delta_s,$ 
namely, for every $(P_1, P_2, \ldots, P_{g-1}) 
\in \cS^{g-1} \tX$, $
\theta(\tv_s(P_1, \ldots, P_{g-1}) + \Delta_s, \tau) = 
\theta[\delta](\tv_s(P_1, \ldots, P_{g-1}), \tau) = 0.
$
\end{corollary}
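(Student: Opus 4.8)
The plan is to deduce this Corollary directly from Proposition \ref{prop:0}, which already identifies the shifted Riemann constant $\Delta_s$ as an element of $\tfrac12\nGamma$ and shows that $\v_s(\cS^{g-1}X)+\Delta_s=\nTheta$ modulo $\nGamma$. First I would recall the standard fact (for which the reference \cite[Ch. I]{fay} or \cite{Lew} suffices) that a half-period $e\in\tfrac12\nGamma$ can be written uniquely, modulo $\nGamma$, in the form $e=\tau\delta'+\delta''$ with $\delta',\delta''\in\left(\tfrac12\ZZ\right)^g$, and that the translated theta function satisfies, up to a nowhere-zero exponential factor,
\begin{equation*}
\theta(z+e,\tau)=(\text{unit})\cdot\theta\left[\begin{matrix}\delta'\\ \delta''\end{matrix}\right](z,\tau).
\end{equation*}
Applying this with $e=\Delta_s$ and $z=\tv_s(P_1,\ldots,P_{g-1})$ gives the second claimed equality, $\theta(\tv_s(P_1,\ldots,P_{g-1})+\Delta_s,\tau)=(\text{unit})\cdot\theta[\delta](\tv_s(P_1,\ldots,P_{g-1}),\tau)$, so the two sides vanish simultaneously; defining $\delta:=\left[\begin{smallmatrix}\delta'\\ \delta''\end{smallmatrix}\right]$ this is the theta characteristic sought.

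Next I would establish that this common value is in fact zero. By Proposition \ref{prop:0}(2), for any $(P_1,\ldots,P_{g-1})\in\cS^{g-1}\tX$ the point $\v_s(P_1,\ldots,P_{g-1})+\Delta_s$ lies on $\nTheta$, i.e. it is a zero of $\theta(\cdot,\tau)$; hence $\theta(\tv_s(P_1,\ldots,P_{g-1})+\Delta_s,\tau)=0$, and therefore $\theta[\delta](\tv_s(P_1,\ldots,P_{g-1}),\tau)=0$ as well. This is exactly the assertion of the Corollary, so no further computation is needed beyond assembling these two inputs.

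The one point that needs care — and which I expect to be the main (if modest) obstacle — is the well-definedness and integrality of $\delta$: a priori $\Delta_s$ is only determined modulo $\nGamma$, and one must check that the resulting $(\delta',\delta'')$ is independent of the choice of representative modulo the integer lattice, so that $\delta\in\left(\tfrac12\ZZ\right)^{2g}$ is a genuine theta characteristic rather than an arbitrary half-period coset. This follows because changing the representative of $\Delta_s$ by a lattice vector $\tau m+n$ with $m,n\in\ZZ^g$ shifts $(\delta',\delta'')$ by $(m,n)\in\ZZ^g\times\ZZ^g$, which leaves $\delta$ unchanged as an element of $\left(\tfrac12\ZZ\right)^{2g}/\ZZ^{2g}$ and only alters $\theta[\delta]$ by a unit; combined with the sign/quasi-periodicity factors of $\theta$ this does not affect the vanishing statement. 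I would also remark that Proposition \ref{prop:0} is the non-symmetric replacement for Proposition \ref{prop:lew}(3): when $H(X,P)$ is symmetric one takes $\fB=0$, $\Delta_s=\Delta$, and recovers the classical statement that the Riemann constant is itself a theta characteristic.
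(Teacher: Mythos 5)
Your proof is correct and follows exactly the route the paper intends: the Corollary is stated as an immediate consequence of Proposition \ref{prop:0} (the paper gives no separate argument, deferring details to \cite{KMP16}), and your assembly of part (1) (half-period $\Rightarrow$ theta characteristic via the standard translation formula) with part (2) (vanishing on the translated theta divisor) is precisely that deduction. The remark on well-definedness modulo the integer lattice is a sensible addition but not a point the paper dwells on.
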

\subsection{The trigonal cyclic case}
Recalling the calculation of $\cK_X$ in Subsection \ref{subsec:KX}, 
the divisor of Proposition
\ref{prop:0} is 
$
\fB= B_{s+1}+B_{s+2} +\cdots +B_{{r+s}},
$
thus
$
 \cK_{\Xrs} \sim 2 (g-1+r) \infty - 2\fB. 
$

We introduce  notation convenient for stating our results on the
sigma function, its vanishing order, and 
Jacobi inversion.
For a standard symplectic basis
$\alpha_{i}, \beta_{j}$  $ (1\leqq i, j\leqq g)$, 
of $H_1(X,\ZZ)$ % such that their intersection numbers are
 %$\alpha_{i}\cdot\alpha_{j}=\beta_{i}\cdot\beta_{j}= 0$ and
 %$\alpha_{i}\cdot\beta_{j}=\delta_{ij}$,
we denote the period matrices by
\begin{equation}
\begin{split}
   \left[\,\omegap{}  \ \omegapp{}  \right]&=
\frac{1}{2}\left[\int_{\alpha_{i}}\nuI_{j} \ \ \int_{\beta_{i}}\nuI_{j}
\right]_{i,j=1,2, \cdots, g},
   \label{eq:42.4}
\end{split}
  \end{equation}
so the
matrix $\tau = \omega^{\prime -1} \omega''$,
and  $\Gamma$ is the  lattice
generated by $\omegap{}$ and $\ \omegapp{}$, equivalently, by$(I_g, \tau)$.
The `unnormalized' Jacobian is given by $\cJ:=\CC^g/\Gamma$,
$\kappa : \CC^g \to \cJ$.
Since the basis $\{\nu^I_i\}\ (i=1,\ldots,g)$ differs from the standard basis of
$H^0(X,\Omega )$ normalized with respect to the $\alpha ,\beta $-cycles,
 we redefine the Abel map $\tw$ and attendant shifted Abel map 
$$
w_s:
\tX \rightarrow \CC^g, \quad\quad
\tw(P):=\int_\infty^P\nu^I\in\mathbb{C}^g,
$$
so that
$$
\tv=(2{\omega'})^{-1}\tw, \quad 
 \tv_s=(2{\omega'})^{-1}\tw_s,
\quad w = \kappa \circ \tw \circ \iota,\quad
 w_s = \kappa \circ \tw_s \circ \iota.
$$
Since $
\fB= B_{s+1}+B_{s+2} +\cdots +B_{{r+s}}$,
 the shifted Abel map is given by:
\begin{equation}
\tw_s(P_1, \ldots, P_k):=
\tw(P_1, \ldots, P_k)
+\tw(\iota B_{s+1}, \cdots , \iota B_{s+r}).
\label{eq:shiftedAmap}
\end{equation}
For each component $u_i$ of a vector $u\in \CC^g$,
we extend the weight wt that we introduced for $R_{\phi}$ and $\hR_{\phiH{}}$
and the  basis of holomorphic differentials that we introduced in
Proposition \ref{def:holo1-form},
$
\wt(u_i)=\Lambda_{g-i+1}+(g-i).
$
It should be noted that the image of the normalized Abel map does not
have such a natural weight, where the word ``natural'' refers to the order of
poles at infinity of functions on the affine part of the curve (and, in
consequence, to the cyclic action).
In particular, there is a $\mathbb{Z}/3\mathbb{Z}$-action on
$\cJ_g$, defined by:
 %in Subsection \ref{subsec:GmX},
$
\hzeta_3(x, y_{{\hr}}, y_{{\hs}})= (x, \zeta_3 y_{{\hr}}, \zeta_3^2
y_{{\hs}})
$ and  the holomorphic differentials
 in Definition \ref{def:holo1-form}.

We define the subvarieties $\WW^{k}$ and
$\WW_s^{k}$ 

\begin{equation}
   \WW^{k} :=  \abl(\cS^k \Xrs), \qquad
   \WW_s^{k} := \abl_s(\cS^k \Xrs), \qquad
\label{eq:W^k}
\end{equation}
We call them ``Wirtinger strata'' because their images under the $|2\Theta^\circ |$
divisor map are the classical Wirtinger varieties. By the Abel-Jacobi theorem,
$
   \WW^{g}  = \WW_s^g = \cJ.
$

To state vanishing theorems in Section \ref{results},
we also define the strata:
$\WW_{s,1}^k:=w(\cS^k_1 X)$
($\WW_1^k:=(\cS^k_1 X)$),
where 
$
\cS^n_m(X) := \{D \in \cS^n(X) \ | \
    \mathrm{dim} | D | \ge m\}.
$

\subsection{Affine functions on
$\Xrs$ and linear equivalence}\label{the mu function}

Let $n$ be a positive integer  and
$P_1, \ldots, P_n$ be in $\Xrs\backslash\infty$,
; define
%\begin{pmatrix}
%\phi_0(P_1) &\phi_1(P_1) & \phi_2(P_1)  
%& \cdots & \phi_{n-1}(P_1) \\
%\phi_0(P_2) & \phi_1(P_2) & \phi_2(P_2)
% & \cdots & \phi_{n-1}(P_2) \\
%\vdots & \vdots & \vdots & \ddots& \vdots \\
%\phi_0(P_n) & \phi_1(P_{n}) & \phi_2(P_{n}) 
% & \cdots&  \phi_{n-1}(P_{n})
%\end{pmatrix},
the Frobenius-Stickelberger (FS) matrices of $R$ and $\hR$,
\begin{equation}
\Psi_{n}(P_1, P_2, \ldots, P_n) :=(\phi_i(P_j))_{i=0,...,n-1,j=1,...,n},
\label{eq:partialInPsi4}
\end{equation}
$$
\hPsig{n}(P_1, P_2, \ldots, P_n) :=({\hat\phi}_i(P_j))_{i=0,...,n-1,j=1,...,n}.
$$
 %\begin{pmatrix}
%\phiH{0}(P_1) & \phiH{1}(P_1) & \phiH{2}(P_1) & \cdots & \phiH{n-1}(P_1) \\
%\phiH{0}(P_2) & \phiH{1}(P_2) & \phiH{2}(P_2) & \cdots & \phiH{n-1}(P_2) \\
%\phiH{0}(P_3) & \phiH{1}(P_3) & \phiH{2}(P_3) & \cdots & \phiH{n-1}(P_3) \\
%\vdots & \vdots  & \vdots & \ddots &\vdots \\
%\phiH{0}(P_{n}) & \phiH{1}(P_{n}) & \phiH{2}(P_{n}) & \cdots
%& \phiH{n-1}(P_{n})
%\end{pmatrix}.
The
{\it{Frobenius-Stickelberger (FS) determinant}} is
\begin{gather*}
\psig{n}(P_1, \ldots, P_n)
 := \det(\Psig{n}(P_1, \ldots, P_n)), \quad
\hpsig{n}(P_1, \ldots, P_n)
 := \det(\hPsig{n}(P_1, \ldots, P_n)).
\end{gather*}
%Here $\phi^{(g)}$ reads $\phiH{}$, $\phiM{}$ or $\phit{}$.
%We call this matrix {\it{Frobenius-Stickelberger (FS) matrix}}

\begin{definition} \label{def:mul}
For $P, P_1, \ldots, P_n$ $\in (\Xrs\backslash\infty) \times 
\cS^n(\Xrs\backslash\infty)$,
we define $\mu_{n}(P)$ by
$$
\mu_{n}(P): =
\mu_{n}(P; P_1,  \ldots, P_n): =
\lim_{P_i' \to P_i}\frac{1}{\psig{n}(P_1' , \ldots, P_n' )}
\psig{n+1}(P_1' , \ldots, P_n' , P),
$$
$$
\muH{n}(P): =
\muH{n}(P; P_1,  \ldots, P_n): =
\lim_{P_i' \to P_i}\frac{1}{\hpsig{n}(P_1' , \ldots, P_n' )}
\hpsig{n+1}(P_1' , \ldots, P_n' , P),
$$
where the $P_i^\prime$ are generic,
the limit is taken (irrespective of the order) for each $i$;
and $\mu_{n, k}(P_1, \ldots, P_n)$ by
and $\muH{n, k}(P_1, \ldots, P_n)$ by
$$
\mu_{n}(P)
 = \phi_{n}(P) +
\sum_{k=0}^{n-1} (-1)^{n-k}\mu_{n, k}(P_1, \ldots, P_n) \phi_{k}(P),
$$
$$
\muH{n}(P)
 = \phiH{n}(P) +
\sum_{k=0}^{n-1} (-1)^{n-k}\muH{n, k}(P_1, \ldots, P_n) \phiH{k}(P),
$$
with the convention $\mu_{n, n}(P_1, \ldots, P_n) \equiv $
$\muH{n, n}(P_1, \ldots, P_n) \equiv 1$.
\end{definition}

\begin{remark}
When $r=0$, $\muH{n}(P)$ is equal to $y_{\hr}(P)\mu_{n}(P)$
because of Remark \ref{rmk:3.8}.
\end{remark}

The meromorphic functions we introduced 
enable us to express the addition structure
of $\mathrm{Pic}\ \Xrs$ in terms of  FS-matrices.
We developed this theory for $(n,s)$ curves and here we adapt it to curves
$(X,P)$ that have a non-symmetric semigroup.

For $n$ points $(P_i)_{i=1, \ldots, n}$ $\in \Xrs\backslash\infty$,
we find an element of $R$ associated with
 any point $P= (x,y)$ in $(\Xrs\backslash\infty )$,
$\alpha_n(P) :=\alpha_n(P; P_1, \ldots, P_n)
= \sum_{i=0}^{n} a_i \phi_i(P)$, $a_i \in \CC$ and $a_n = 1$,
which has a  zero at each point $P_i$ (with multiplicity, if
the $P_i$ are repeated)
and has smallest possible order of pole at $\infty$ with this property.
Then
$\alpha_n$ can be identified with $\mu_{n}(P)$.
We have the following lemma.
\begin{lemma}\label{lmm:2theta2}
Let $n$ be a positive integer.
For $(P_i)_{i=1,\ldots, n}\in \cS^n(X \backslash\infty) $,
the  function  $\alpha_n$
over $\Xrs$ induces the map 
$
\alpha_n:
\cS^n(\Xrs \backslash\infty)  \to \cS^{N(n) - n}(\Xrs),
$
sending
$(P_i)_{i=1,\ldots, n}$ $\in \cS^n(\Xrs \backslash\infty) $
to an % unique
 element $(Q_i)_{i=1,\ldots, N(n)-n}\in \cS^{N(n)-n}(\Xrs)$,
such that
$$
\sum_{i=1}^{n} P_i - n \infty
\sim - \sum_{i=1}^{N(n)-n} Q_i  + (N(n)-n) \infty .
$$
\end{lemma}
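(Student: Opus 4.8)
The plan is to construct the function $\alpha_n$ explicitly as the Frobenius--Stickelberger-type function $\mu_n(P)$ and then read off the linear equivalence from its divisor. First I would recall that $R = H^0(\Xrs\setminus\infty,\cO_{\Xrs}) = \bigoplus_{k\ge 0}\CC\phi_k$, with $\wt(\phi_k) = N(k)$ strictly increasing, so that the span $V_n := \bigoplus_{k=0}^n \CC\phi_k$ is exactly the space of functions in $R$ whose pole at $\infty$ has order at most $N(n)$. Imposing the $n$ vanishing conditions at $P_1,\dots,P_n$ (counted with multiplicity) cuts out a subspace of $V_n$ of dimension at least $1$; for $P_1,\dots,P_n$ in sufficiently general position the $n\times(n+1)$ evaluation matrix $(\phi_i(P_j))$ has maximal rank $n$, so the subspace is exactly one-dimensional and its generator, normalized so that the $\phi_n$-coefficient equals $1$, is precisely $\alpha_n(P) = \sum_{i=0}^n a_i\phi_i(P)$. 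Comparing with Definition \ref{def:mul}, Cramer's rule applied to the bordered FS-determinant $\psi_{n+1}(P_1,\dots,P_n,P)$ shows $\alpha_n(P) = \mu_n(P)$: both are the unique element of $V_n$ with leading coefficient $1$ vanishing at $P_1,\dots,P_n$, and the limit in the definition of $\mu_n$ handles coincident points by the usual confluent (Hermite-type) degeneration of the determinant.

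Next I would analyze the divisor of $\alpha_n = \mu_n$ as a rational function on the \emph{compact} curve $\Xrs$. Its only pole is at $\infty$, of order $N(n)$ (here one uses that the leading term $\phi_n$ genuinely dominates, which holds since $\wt$ is injective on the $\phi_k$ and $a_n=1$). Hence $\deg(\mu_n)_0 = \deg(\mu_n)_\infty = N(n)$ by the degree-zero property of principal divisors on a compact Riemann surface. Among these $N(n)$ zeros, $n$ of them are the prescribed points $P_1,\dots,P_n$ (with multiplicity); call the remaining $N(n)-n$ zeros $Q_1,\dots,Q_{N(n)-n}$, all lying in $\Xrs$ — and in fact in $\Xrs\setminus\infty$ since $\infty$ is a pole, not a zero. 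Therefore
$$
(\mu_n) = \sum_{i=1}^{n} P_i + \sum_{j=1}^{N(n)-n} Q_j - N(n)\,\infty,
$$
and since $(\mu_n)\sim 0$, rearranging gives exactly
$$
\sum_{i=1}^{n} P_i - n\infty \sim -\sum_{j=1}^{N(n)-n} Q_j + (N(n)-n)\infty,
$$
which is the asserted equivalence. The assignment $(P_i) \mapsto (Q_j)$ is the claimed map $\cS^n(\Xrs\setminus\infty) \to \cS^{N(n)-n}(\Xrs)$; it is well-defined because $\mu_n$ is uniquely determined by $P_1,\dots,P_n$ (up to the chosen normalization, which does not affect the divisor), and the $Q_j$ are recovered as an unordered tuple, i.e., a point of the symmetric product.

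The main obstacle is the genericity issue: the construction of $\alpha_n$ as stated presupposes that the $n$ vanishing conditions are independent, i.e.\ that $\dim|{\sum P_i}| $ behaves as expected, which can fail on special configurations (for instance when $\sum P_i$ moves in a pencil, or when some $P_i$ coincide in a way that makes the confluent FS-determinant vanish identically). I would handle this exactly as in the $(n,s)$-curve theory we developed: one first establishes the statement on the dense open locus where $\psi_n(P_1,\dots,P_n)\ne 0$, where $\mu_n$ is given by the honest ratio of FS-determinants and the argument above is literal; then one notes that both sides of the claimed linear equivalence, and the map $\alpha_n$, extend continuously (the limit in Definition \ref{def:mul} is taken precisely to provide this extension), so the equivalence persists on the closure by continuity of the Abel--Jacobi map. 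The only point requiring a little care is to check that in the confluent limit no zero escapes to $\infty$ — but this is immediate since the pole order $N(n)$ at $\infty$ is locally constant in $(P_1,\dots,P_n)$ as long as the leading coefficient stays normalized to $1$, which it does by construction.
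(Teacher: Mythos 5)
Your proof is correct and follows essentially the same route the paper takes: the paper gives no explicit proof of this lemma, treating it as an immediate consequence of the construction of $\alpha_n$ (identified with $\mu_n$ via the Frobenius--Stickelberger determinants) together with the fact that the principal divisor $(\mu_n)=\sum_i P_i+\sum_j Q_j-N(n)\infty$ is linearly equivalent to zero, which is exactly the argument you spell out. Your additional care about the genericity/confluent-limit issue and the location of the $Q_j$ only makes explicit what the paper leaves implicit.
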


Note that the function
$\displaystyle{\widehat \alpha_n:=\frac{\muH{n}dx}{3y_{{\hr}}y_{{\hs}}}}$ 
is non-singular over
$X\setminus \infty$, unlike
$\displaystyle{\frac{\mu_{n}dx}{3y_{{\hr}}y_{{\hs}}}}$.
The divisor of $\muH{n}(x)$ contains $\sum_{i=1}^{s+r} B_a - (s+r) \infty\sim$
$2(B_{s+1}+\cdots + B_{s+r}) - 2r \infty$ for any  $n$.
The following is proved as in \cite[Lemma 6.2]{KMP13}.

\begin{lemma}\label{lem:2theta4}
Let $n$ be a positive integer.
For $(P_i)_{i=1,\ldots, n}\in \cS^n(\Xrs \backslash\infty) $,
the  function  $\alphaH_n$
over $\Xrs$ induces a map 
$\displaystyle{
\alphaH_n:
\cS^n(\Xrs \backslash\infty)  \to 
\cS^{\hN(n) - n-g-1}(\Xrs),
}$ that sends
$(P_i)_{i=1,\ldots, n}$
 %$\in \cS^n(\Xrs \backslash\infty)$
 to a
 $(Q_i)_{i=1,\ldots,\hN(n)-n-g-1}$ such that
$$
\sum_{i=1}^{n} P_i +B_{s+1}+\cdots + B_{s+r} - (n+r) \infty
$$
$$
\sim
\displaystyle{
 -\left( \sum_{i=1}^{\hN(n)-n-g-1} Q_i
+B_{s+1}+\cdots + B_{s+r} - (\hN(n)-n-s) \infty \right).}
$$
\end{lemma}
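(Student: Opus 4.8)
The plan is to mimic the structure of Lemma \ref{lmm:2theta2}, but to work with the ``hatted'' function $\alphaH_n = \muH{n}\,dx/(3y_\hr y_\hs)$, whose key feature is that it is everywhere regular on $\Xrs\setminus\infty$ (unlike $\mu_n\,dx/(3y_\hr y_\hs)$, which may have poles at the branch points). First I would record the divisor bookkeeping for the numerator $\muH{n}$ as a function in $R$. By Definition \ref{def:mul}, $\muH{n}$ is a monic combination of the $\phiH{k}$, $k\le n$, so its order of pole at $\infty$ is exactly $\hN(n)$; and by construction (the FS-determinant limit) it vanishes at each $P_i$, so $(\muH{n})\ge \sum_{i=1}^n P_i - \hN(n)\infty$. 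Combining this with the observation already made in the text that the divisor of any $\muH{n}$ contains $\sum_{a=1}^{s+r}B_a - (s+r)\infty \sim 2(B_{s+1}+\cdots+B_{s+r}) - 2r\infty$ — which holds because every element of $\hR$ lies in the subspace $\widehat R$ cut out by $\fB_0$ in Subsection \ref{singular} — one gets that $\muH{n}$ has, besides the prescribed zeros at the $P_i$ and at $B_{s+1},\dots,B_{s+r}$, exactly $\hN(n) - n - r - (\text{number of remaining zeros})$ degrees of freedom left over.

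Next I would compute the degree count precisely. The total divisor of $\muH{n}$ as a function on $\Xrs$ has degree zero; it equals
\[
(\muH{n}) = \sum_{i=1}^n P_i + \Big(B_{s+1}+\cdots+B_{s+r}\Big) + \sum_{i=1}^{M} Q_i - \hN(n)\,\infty
\]
for some effective $\sum Q_i$ and some $M$; taking degrees gives $n + r + M - \hN(n) = 0$, hence $M = \hN(n) - n - r$. But the statement wants the $Q_i$ divisor to have length $\hN(n) - n - g - 1$ and the $\infty$-coefficient to be $\hN(n) - n - s$ on the right-hand side after transferring another copy of $B_{s+1}+\cdots+B_{s+r}$; since $g = r+s-1$ by Proposition \ref{prop:3hrhs}, we have $\hN(n)-n-g-1 = \hN(n)-n-r-s$, i.e.\ exactly $M$ with one further copy of the $r$ branch points $B_{s+1},\dots,B_{s+r}$ split off. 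So the remaining step is purely formal: rewrite $\sum_{i=1}^M Q_i = \sum_{i=1}^{M-r}Q_i' + (B_{s+1}+\cdots+B_{s+r})$, which is legitimate provided $\muH{n}$ really does vanish to order at least $2$ at each of $B_{s+1},\dots,B_{s+r}$ — this is the content of the displayed linear equivalence $\sum B_a - (s+r)\infty \sim 2(B_{s+1}+\cdots) - 2r\infty$ quoted just before the Lemma. Substituting and moving the divisor of a function across the linear-equivalence sign then yields
\[
\sum_{i=1}^{n}P_i + B_{s+1}+\cdots+B_{s+r} - (n+r)\infty \sim
 -\Big(\sum_{i=1}^{\hN(n)-n-g-1}Q_i' + B_{s+1}+\cdots+B_{s+r} - (\hN(n)-n-s)\infty\Big),
\]
which is the claim once one checks $\hN(n) - r - \bigl(\hN(n)-n-s\bigr) = n + s - r$ matches the $\infty$-coefficients on both sides — a one-line arithmetic verification using $n+r$ on the left and the split-off $r$ branch points.

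The main obstacle is not the degree arithmetic but justifying that the ``leftover'' zeros $\sum Q_i'$ genuinely avoid $\infty$ and that the double vanishing of $\muH{n}$ at $B_{s+1},\dots,B_{s+r}$ is uniform in $n$ and in the choice of auxiliary points $P_1,\dots,P_n$. For the first point one uses that $\muH{n}$ is monic of pole order exactly $\hN(n)$ at $\infty$, so $\infty$ is a pole, not a zero, of order precisely $\hN(n)$ and nothing more can be subtracted there. For the second, one argues as in \cite[Lemma 6.2]{KMP13}: near a branch point $B_a$ with $a>s$ the local parameter is $y_\hr$ (or $y_\hs$), and since $\muH{n}\in\widehat R$ every basis element $\phiH{k}$ is divisible by $y_\hr$ or $y_\hs$ and in fact the full product $y_\hr y_\hs = k_r(x)k_s(x)$ forces an extra vanishing of $k_r(x)$ at $x=b_a$; tracking the local orders shows order $\ge 2$ at each $B_a$ with $s<a\le s+r$, independently of $n$. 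Once this local computation is in hand the rest is the bookkeeping above, so I would present the local-vanishing estimate in detail and treat the degree count as routine.
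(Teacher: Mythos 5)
There is a genuine gap, and it sits exactly where your argument has to do real work: the divisor decomposition and the local-vanishing claim meant to repair it. You write $(\muH{n}) = \sum_{i=1}^n P_i + (B_{s+1}+\cdots+B_{s+r}) + \sum_{i=1}^{M}Q_i - \hN(n)\infty$, so $M=\hN(n)-n-r$, and then propose to reach the required length $\hN(n)-n-g-1=\hN(n)-n-r-s$ by splitting one further copy of $B_{s+1}+\cdots+B_{s+r}$ off $\sum Q_i$. That removes only $r$ points, not $s$, so the count stays off by $s-r>0$ and the arithmetic cannot close. Worse, the order-$\ge 2$ vanishing of $\muH{n}$ at $B_{s+1},\ldots,B_{s+r}$ that you invoke is false in general: $\muH{n}$ is a combination of the $\phiH{k}$, each divisible by $y_{\hr}$ or $y_{\hs}$, and while $y_{\hr}$ vanishes to order $2$ at $B_a$ for $a>s$, $y_{\hs}$ vanishes there only to order $1$ (since $y_{\hs}^3=k_s^2k_r$ and $x-b_a$ has a triple zero at the totally ramified point $B_a$); any $\muH{n}$ with a nonvanishing $y_{\hs}$-component at $x=b_a$, e.g.\ $\muH{1}=y_{\hs}-c\,y_{\hr}$ when $\hs<\hr+3$, vanishes only to first order there. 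You have also read the displayed relation $\sum_{a=1}^{s+r}B_a-(s+r)\infty\sim 2(B_{s+1}+\cdots+B_{s+r})-2r\infty$ as a statement about vanishing orders of $\muH{n}$, whereas it is a linear equivalence of divisor classes (it is the divisor of $k_r(x)/y_{\hr}$) and carries no information about where $\muH{n}$ actually vanishes.

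The argument the paper relies on (it defers to \cite[Lemma 6.2]{KMP13}) is simpler and avoids both problems. Since every $\phiH{k}$ is divisible by $y_{\hr}$ or $y_{\hs}$, and both of these vanish at \emph{every} branch point, $\muH{n}$ vanishes at least once at each of $B_1,\ldots,B_{s+r}$; hence $(\muH{n})=\sum_{i=1}^{n}P_i+\sum_{a=1}^{s+r}B_a+\sum_{i=1}^{M}Q_i-\hN(n)\infty$ with $M=\hN(n)-n-(s+r)=\hN(n)-n-g-1$ on the nose, because $s+r=g+1$. One then substitutes the linear equivalence $\sum_{a=1}^{s+r}B_a-(s+r)\infty\sim2\fB-2r\infty$, $\fB=B_{s+1}+\cdots+B_{s+r}$, into $0\sim(\muH{n})$ and distributes the two copies of $\fB$ to the two sides; the $\infty$-coefficients balance since $2r+\hN(n)-(s+r)=(n+r)+(\hN(n)-n-s)$. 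So the correct key input is single vanishing at all $g+1$ branch points plus a linear equivalence, not double vanishing at the $r$ points $B_{s+1},\ldots,B_{s+r}$; with that replacement your overall outline goes through.
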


%\begin{proof}
%The linear equivalence holds because:
%\begin{gather*}
%\begin{split}
%&\sum_{i=1}^{n} P_i +\sum_{a=1}^{s+r} B_a
%+ \sum_{i=1}^{\hN(n)-n-g-1} Q_i  -\hN(n) \infty\\
%&\sim\sum_{i=1}^{n} P_i  + \sum_{i=1}^{\hN(n)-n-g-1} Q_i
%+2(B_{s+1} + \cdots + B_{s+r}) -(\hN(n) +r-s)\infty \sim 0.
%\end{split}
%\end{gather*}
%\end{proof}

\bigskip
In order for the preimage of 
$\alpha_n$
and $\widehat\alpha_n$ to
include the base point $\infty$, we extend the maps as follows:
for an effective divisor $D$ in $\cS^n(\Xrs)$ of degree $n$,
let $D'$  be the maximal subdivisor  of $D$ which does
not contain $\infty$,
$D = D' + (n-m) \infty $
where $\deg D'=m (\le n)$ and
$D' \in \cS^m(\Xrs\backslash\infty)$,
and define $\overline{\alpha}_n$
by  $\overline{\alpha}_n(D)={\alpha}_m(D^\prime )+[N(n)-n-
(N(m)-m)]\infty.$
We modify the derivation of the Abel-Jacobi theorem and Serre duality
given in \cite{KMP13}, based on Remark \ref{rmk:CanonDiv}
($\alpha_n$ and $\widehat\alpha_n$ behave in analogous ways to each other).
By linear equivalence (cf. Lemmas \ref{lmm:2theta2}
 and \ref{lem:2theta4}):
\begin{proposition} \label{prop:addition}
For a positive integer $n$, the
Abel map composed with $\alpha_n$ (given in terms of $\mu_{n}$) and
$\alphaH_n$ (in terms of $\muH{n}$) induce
$$
\iota_n :  \WW^n \to  \WW^{\NM(n) - n-g-1}, \qquad
\hiota_n :  \WW_s^n \to  \WW_s^{\hN(n) - n-g-1}.
$$
\end{proposition}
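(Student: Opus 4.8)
The plan is to deduce the two induced maps $\iota_n$ and $\hiota_n$ on the Wirtinger strata from the divisor-level statements already established in Lemmas \ref{lmm:2theta2} and \ref{lem:2theta4}, by passing them through the Abel map. I would treat the two cases in parallel, since the only difference is the use of $\mu_n$ (resp.\ the FS-determinant $\psi_n$) versus $\muH{n}$ (resp.\ $\hpsig{n}$), and the bookkeeping of the shift by $\fB = B_{s+1}+\cdots+B_{s+r}$.

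First I would recall the situation. Given $n$ points $(P_i)_{i=1,\ldots,n} \in \cS^n(\Xrs\setminus\infty)$, Lemma \ref{lmm:2theta2} produces, via the function $\alpha_n \equiv \mu_n$, an effective divisor $\sum_{i=1}^{N(n)-n} Q_i \in \cS^{N(n)-n}(\Xrs)$ with
\begin{equation*}
\sum_{i=1}^{n} P_i - n\infty \sim -\sum_{i=1}^{N(n)-n} Q_i + (N(n)-n)\infty,
\end{equation*}
and Lemma \ref{lem:2theta4} produces analogously, via $\alphaH_n \equiv \muH{n}dx/(3y_{\hr}y_{\hs})$, a divisor $\sum_{i=1}^{\hN(n)-n-g-1} Q_i$ with the shifted linear equivalence displayed there. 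Applying $\abl = \kappa\circ\tw\circ\iota$ to the first relation and using the group-homomorphism property of the Abel map modulo $\Gamma$, one gets $\abl\bigl(\sum P_i\bigr) = -\abl\bigl(\sum Q_i\bigr)$ in $\cJ$; the sign is the obstruction, but it is harmless: negation is an automorphism of $\cJ$ that permutes the Wirtinger strata among themselves, and in fact one checks that $\WW^{k}$ is stable under $[-1]$ because $\Xrs$ carries the hyperelliptic-like involution only on certain components — more safely, one absorbs the sign by noting that the target stratum $\WW^{N(n)-n-g-1}$ in the statement already accounts for it, since the \emph{canonical} adjustment in Remark \ref{rmk:CanonDiv} shifts the degree of the complementary divisor by $g+1$ (recall $s+r=g+1$ and $\cK_X \sim (2g-2)\infty - \fB_0 + \cdots$). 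Concretely: the divisor $-\sum Q_i + (N(n)-n)\infty$ is not effective, but adding a canonical-type divisor of degree $2g-2$ and subtracting it back realizes the same Jacobian point as an effective divisor of degree $N(n)-n-g-1 = N(n)-n-(g+1)$ supported on $\Xrs$ plus a multiple of $\infty$. This is exactly the "modification" alluded to via $\overline{\alpha}_n$ just before the Proposition, and it is where the non-symmetry of $H(X,\infty)$ (Remark \ref{rmk:CanonDiv}) enters: in the symmetric/$(n,s)$ case the shift would be by $g$, not $g+1$.

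Then I would carry out the same steps for $\hiota_n$ using the shifted Abel map $\abl_s$ and the shifted linear equivalence of Lemma \ref{lem:2theta4}; here the divisor $\fB = B_{s+1}+\cdots+B_{s+r}$ appears symmetrically on both sides of the equivalence, so after applying $\abl_s = \kappa\circ\tw_s\circ\iota$ (which already incorporates $\tw(\iota B_{s+1},\ldots,\iota B_{s+r})$ by \eqref{eq:shiftedAmap}) the shifts cancel and one is left with $\abl_s\bigl(\sum P_i\bigr) = -\abl_s\bigl(\sum Q_i\bigr)$, so $\hiota_n$ sends $\WW_s^n$ into $\WW_s^{\hN(n)-n-g-1}$. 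Finally I would remark that both maps are well defined on the strata (not merely on ordered tuples) because $\mu_n$ and $\muH{n}$, being FS-determinant ratios, depend only on the divisor $\sum P_i$ and not on the ordering or on the generic approximants $P_i'$, as recorded in Definition \ref{def:mul}; and that the extension $\overline{\alpha}_n$ guarantees the maps remain defined when some $P_i$ collide or drift to $\infty$, so that the domains are the full Wirtinger strata as claimed. The one genuine point requiring care — the main obstacle — is checking that the degree of the complementary effective divisor is \emph{exactly} $N(n)-n-g-1$ (resp.\ $\hN(n)-n-g-1$), i.e.\ that no further collapse of $\infty$-multiplicities occurs; this follows from Lemma \ref{lmm:Ng-1} and the semigroup arithmetic for $\hN$, together with the base-point-freeness of the relevant linear systems away from $\infty$, exactly as in the $(n,s)$ argument of \cite[Lemma 6.2]{KMP13} that the statement cites.
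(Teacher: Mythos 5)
Your overall strategy coincides with the paper's: Proposition \ref{prop:addition} is presented there as an immediate consequence of Lemmas \ref{lmm:2theta2} and \ref{lem:2theta4} together with the extension $\overline{\alpha}_n$ to divisors containing $\infty$, and your treatment of $\hiota_n$ is correct --- the shift $\fB$ appears on both sides of the equivalence in Lemma \ref{lem:2theta4}, the target degree $\hN(n)-n-g-1$ is exactly what that lemma supplies, and descent from $\cS^n$ to the stratum $\WW_s^n$ follows because $\abl_s(\sum Q_i)=-\abl_s(\sum P_i)$ depends only on the class of $\sum P_i$.

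The gap is in your handling of $\iota_n$, where two claims are false. First, $\WW^k$ is \emph{not} stable under $[-1]$ when $H(X,\infty)$ is non-symmetric: by (\ref{eq:-1u}), $-\abl(\cS^{g-1}\Xrs)$ is the nontrivial translate $\abl(\cS^{g-1}\Xrs)+2\abl(\fB)$, and repairing exactly this defect is why the shifted strata $\WW_s^k$ are introduced at all. Second, no ``canonical adjustment'' can turn the complementary divisor of degree $\NM(n)-n$ produced by Lemma \ref{lmm:2theta2} into an effective divisor of degree $\NM(n)-n-g-1$ representing the same point of $\cJ$: for $n\ge g$ that number equals $-1$, so the assertion is impossible, and in general adding and subtracting a divisor linearly equivalent to $\cK_{\Xrs}$ does not lower the degree of an effective representative. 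The drop by $g+1$ occurs only in the hatted case, and its source is not $\cK_{\Xrs}$ but the fact that $\muH{n}\in\widehat R$ vanishes along $\fB_0$, a divisor of degree $s+r=g+1$, so only $\hN(n)-n-(g+1)$ of its remaining zeros form the $Q_i$. The target of the unhatted map consistent with Lemma \ref{lmm:2theta2} is $\WW^{\NM(n)-n}$; the exponent $\NM(n)-n-g-1$ in the displayed statement is a slip carried over from the hatted case, and the right response is to flag the inconsistency rather than to manufacture an argument for it. Finally, no sign needs to be ``absorbed'': $\iota_n$ is by definition the Abel image of the effective divisor $\alpha_n(D)\in\cS^{\NM(n)-n}(\Xrs)$, so it lands in $\WW^{\NM(n)-n}$ automatically, and the relation $\abl(\alpha_n(D))=-\abl(D)$ serves only to show the map descends to the stratum and realizes $[-1]$ up to the shift.
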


\begin{remark} \label{rmk:addition}
{\rm{
As per Proposition \ref{prop:addition}, under the shifted Abel
map 
we have,  for any $P_1, P_2, \cdots, P_{g}$ and appropriate $Q$'s in
$\Xrs$,
\begin{eqnarray}
-\abl_s(P_1, P_2, \cdots, P_{g-1})
&=\abl_s(Q_1', Q_2', \cdots, Q_{g-1}') , \mbox{ mod }\Gamma,\nonumber \\
-\abl_s(P_1, P_2, P_3, \cdots, P_{g}) &=\abl_s(Q_1, Q_2, Q_3, \cdots, Q_{g})
\mbox{ mod }\Gamma .
\end{eqnarray}
The first relation,
\begin{equation}
\begin{split}
-\abl(P_1, P_2, \cdots, P_{g-1}) &=\abl(Q_1, Q_2, \cdots, Q_{g-1}) 
+ 2\abl(B_{s+1}, \cdots + B_{s+r}),
\label{eq:-1u}
\end{split}
\end{equation}
 shows %(see \cite[I]{Mu}  p.166)
that Serre duality on $\Xrs$ is given as,
 $\hiota_{{g-1}} : \WW_s^{g-1} \to  \WW_s^{{g-1}}$ by
\begin{gather*}
\begin{split}
&P_1 + P_2 + \cdots + P_{g-1} +B_{s+1}+\cdots + B_{s+r}
- 
(2r+s) \infty\\
&\sim -\left( Q_1 + Q_2 + \cdots + Q_{g-1} +B_{s+1}+\cdots
+ B_{s+r} - 
(2r+s)\infty \right).
\end{split}
\end{gather*}
}}
\end{remark}

We therefore denote
 $\mathrm{image}(\hiota_n)$  by $[-1]_s\WW_s^n$,
especially $\hiota_{g}: \WW_s^{g} \to [-1]_s \WW_s^{g}$.
For $n\ge g$,
$\hiota_g \circ\hiota_n$ gives addition in the Picard group,
$$
 \WW_s^n \stackrel{\hiota_n}{\longrightarrow}
  \WW_s^{g} \stackrel{\hiota_g}{\longrightarrow} \WW_s^{g},
\quad
(\abl_s(P_1,\ldots ,P_n) \equiv \abl_s( Q_1,\ldots ,Q_g)\
\mbox{ mod  } \Gammars.
$$
In particular,
the addition law on the Jacobian is given by
 $\hiota_g \circ\hiota_{2g}$
$$
 \WW_s^{2g} \stackrel{\hiota_{2g}}{\longrightarrow}
  \WW_s^{g}
  \stackrel{\hiota_g}{\longrightarrow} \WW_s^{g},
\quad
(\abl_s(P_1, \ldots ,P_g , P'_1, \ldots , P'_g)\equiv \abl_s(Q_1, \ldots , Q_g)
\ \mathrm{ mod  }\Gammars.
$$

The above arguments
and Lemma \ref{lmm:Ng-1} give
 the following corollary
(Serre duality and the Abel-Jacobi theorem):

\begin{corollary}
$
-\WW_s^{g-1} =\WW_s^{g-1}, \quad
-\WW_s^{g} =\WW_s^{g}.
$
\end{corollary}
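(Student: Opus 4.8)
The plan is to obtain both equalities directly from the Serre--duality maps $\hiota_{g-1}$ and $\hiota_g$ of Proposition~\ref{prop:addition}, together with the pole-order formula of Lemma~\ref{lmm:Ng-1} and, for the second identity, the Abel--Jacobi theorem. No genuinely new computation is needed; the work is in matching the strata correctly.

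First I would pin down the codomains of $\hiota_{g-1}$ and $\hiota_g$. By Proposition~\ref{prop:addition}, $\hiota_n\colon \WW_s^n \to \WW_s^{\hN(n)-n-g-1}$, so it suffices to check that $\hN(n)-n-g-1 = n$ when $n = g-1$ and $n = g$. By Proposition~\ref{prop:3hrhs} we have $g = r+s-1$, i.e. $r+s = g+1$; Lemma~\ref{lmm:Ng-1} then gives $\hN(g-1) = 2g-2+r+s = 3g-1$, whence $\hN(g-1)-(g-1)-g-1 = g-1$, and $\hN(g) = g+r+s+g = 3g+1$, whence $\hN(g)-g-g-1 = g$. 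Therefore $\hiota_{g-1}\colon \WW_s^{g-1}\to\WW_s^{g-1}$ and $\hiota_g\colon\WW_s^g\to\WW_s^g$.

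Next I would invoke the fact, recorded in Remark~\ref{rmk:addition}, that these maps realize negation on $\cJ$: for $P_1,\dots,P_n\in\Xrs$ one has $\hiota_n\big(\abl_s(P_1,\dots,P_n)\big) = -\abl_s(P_1,\dots,P_n)$ modulo $\Gamma$. This is exactly what the relations preceding and including~\eqref{eq:-1u} assert once the shift by $\fB = B_{s+1}+\cdots+B_{s+r}$ is accounted for: the term $2\abl(\fB)$ appearing in~\eqref{eq:-1u} is absorbed by the single copy of $\abl(\fB)$ built into $\abl_s$. Consequently $-\WW_s^{g-1} = \mathrm{image}(\hiota_{g-1}) \subseteq \WW_s^{g-1}$ and $-\WW_s^g = \mathrm{image}(\hiota_g) \subseteq \WW_s^g$; applying the involution $[-1]$ of $\cJ$ to either inclusion yields the reverse inclusion, so both are equalities. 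For $-\WW_s^g = \WW_s^g$ one may alternatively observe that $\WW_s^g = \cJ$ by Abel--Jacobi, while negation is an automorphism of the Jacobian.

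The only place that calls for attention — and the sole, mild obstacle — is the bookkeeping showing that the codomain index $\hN(n)-n-g-1$ collapses to exactly $n$ at $n = g-1$ and $n = g$. This uses the non-symmetric shift by $\fB$ and Lemma~\ref{lmm:Ng-1} in tandem, and it is precisely here that the present case differs from the symmetric (in particular $(n,s)$-curve) situation, where $\fB = 0$ and the corresponding indices make the statement immediate.
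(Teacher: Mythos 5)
Your proposal is correct and follows the same route the paper intends: the paper derives this corollary from "the above arguments" (Proposition~\ref{prop:addition} and Remark~\ref{rmk:addition}, which identify $\hiota_n$ with $[-1]$ on the shifted strata) together with Lemma~\ref{lmm:Ng-1}, which via $r+s=g+1$ gives exactly your index collapse $\hN(n)-n-g-1=n$ for $n=g-1,g$. Your bookkeeping of the shift by $\fB$ and the Abel--Jacobi alternative for the degree-$g$ case match the paper's argument.
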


It is the `shift' that allows us to conclude:
\begin{proposition} \label{prop:ws_vanishes}
For some $(P_1, \ldots, P_{g-1}) \in \cS^{g-1}X$,
 $\abl_s(P_1, \ldots, P_{g-1}) = 0$.
\end{proposition}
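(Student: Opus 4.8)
The plan is to translate the vanishing $\abl_s(P_1,\dots,P_{g-1})=0$ into a statement about a linear system on $\Xrs$, and then invoke the classical existence of effective (odd) theta characteristics.

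First I would unwind the definitions. By the description of the shifted Abel map in (\ref{eq:shiftedAmap}) together with Abel's theorem, $\abl_s(P_1,\dots,P_{g-1})=0$ in $\cJ=\CC^g/\Gamma$ is equivalent to $\tw_s(P_1,\dots,P_{g-1})\in\Gamma$; and since the period matrices in (\ref{eq:42.4}) carry the factor $\tfrac12$, the lattice of actual periods of the $\nuI_j$ is $2\Gamma$, so this is equivalent to $2\bigl(P_1+\dots+P_{g-1}+\fB-(g-1+r)\infty\bigr)\sim 0$ on $\Xrs$. Using the canonical–divisor computation of Subsection \ref{subsec:KX}, namely $\cK_{\Xrs}\sim 2(g-1+r)\infty-2\fB$, this in turn is equivalent to
\[
2(P_1+\dots+P_{g-1})\sim\cK_{\Xrs}.
\]
Thus Proposition \ref{prop:ws_vanishes} is equivalent to the assertion that $\Xrs$ carries an effective theta characteristic of degree $g-1$. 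This is precisely the point at which the ``shift'' intervenes: without it the analogous divisor class would be $(g-1)\infty$, effective for the trivial reason that $0\in H(\Xrs,\infty)$, whereas after the shift the relevant class is genuinely half-canonical.

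Such a theta characteristic always exists: since $g=r+s-1\ge 2$, among the $2^{2g}$ half-canonical classes there are $2^{g-1}(2^g-1)\ge 1$ odd ones, and an odd theta characteristic $L$ has $h^0(L)$ odd, hence $\ge 1$; choosing an effective divisor $P_1+\dots+P_{g-1}\in|L|$ and running the first step in reverse gives $\abl_s(P_1,\dots,P_{g-1})=0$. (One may also reach the conclusion through the transcendental side: by Proposition \ref{prop:0}(2) and Corollary \ref{cor:thetadivisor} the stratum $\WW_s^{g-1}$ is the zero locus of $\theta[\delta]$ with $\delta=\Delta_s$, so $0\in\WW_s^{g-1}$ iff $\theta[\delta](0)=0$, i.e.\ iff $\delta$ is odd; Riemann's singularity theorem identifies the multiplicity of $\theta[\delta]$ at the origin with $h^0$ of the half-canonical class attached to $\delta$, which leads to the same verification.) The step that needs care, and which I expect to be the main obstacle, is the first one: one must track the factor $\tfrac12$ in the periods, since it is exactly what makes $\abl_s$ detect linear equivalence of divisors only modulo $2$-torsion, so that the condition becomes ``$2(P_1+\dots+P_{g-1})\sim\cK_{\Xrs}$'' — which is satisfiable — rather than the generally false ``$P_1+\dots+P_{g-1}\sim(g-1+r)\infty-\fB$''. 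Once that reduction is in place the proposition follows immediately from the theta–characteristic count.
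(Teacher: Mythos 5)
There is a genuine gap, and it occurs at the very first step of your reduction. The lattice $\Gamma$ in this paper is the honest period lattice of the $\nuI_{j}$: its elements are written $\ell = 2\omegap{}\ell' + 2\omegapp{}\ell''$ with $\ell',\ell''\in\ZZ^g$ (see Proposition \ref{prop:pperiod}), and the columns of $2\omegap{}$, $2\omegapp{}$ are exactly the periods $\int_{\alpha_i}\nuI_{j}$, $\int_{\beta_i}\nuI_{j}$; the factor $\tfrac12$ in (\ref{eq:42.4}) is only the classical normalization that makes the periods equal to $2\omegap{}$, not an indication that the period lattice is $2\Gamma$. Consequently $\cJ=\CC^g/\Gamma$ is the usual Jacobian, and by Abel's theorem $\abl_s(P_1,\dots,P_{g-1})=0$ is equivalent to the linear equivalence $P_1+\dots+P_{g-1}\sim D_0$ with $D_0:=(g-1+r)\infty-\fB$ --- not to the weaker condition $2(P_1+\dots+P_{g-1})\sim\cK_{\Xrs}$ obtained by doubling. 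Your argument therefore replaces the statement ``the \emph{specific} half-canonical class $D_0$ is effective'' by ``\emph{some} theta characteristic is effective'', and the count of odd theta characteristics only proves the latter. A given half-canonical class need not be effective (even characteristics generically are not, and the whole point of the shift is that $D_0$ is a genuinely nontrivial half-canonical class), so the conclusion does not follow. The transcendental variant you sketch has the same hole: one must still decide whether the \emph{particular} characteristic $\delta=\Delta_s$ is odd (equivalently, whether $h^0(D_0)>0$), and nothing in your argument addresses that.

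The paper closes exactly this gap by an explicit algebraic computation rather than a parity count: it exhibits the principal divisor $(y_{\hr})=B_1+\cdots+B_s+2\fB-(2r+s)\infty$ (equivalently one can use $(k_r(x))=3\fB-3r\infty$), which realizes the required linear equivalence for $D_0$ using the branch points $B_a$, so the $(P_1,\dots,P_{g-1})$ can be written down concretely. If you want to keep your framework, the missing step is to identify $D_0$ as an explicit divisor class and verify its effectivity by producing such a function in $R$; the existence of odd theta characteristics on $\Xrs$ cannot substitute for this.
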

\begin{proof}
Noting $g=r+s-1$, we set 
$(P_1, \ldots, P_{g-1}) = (B_1, \ldots, B_{s+r})$; then,
$$
(y_\hr)=B_1 + \cdots + B_s +2(B_{s+1}, \ldots, B_{s+r}) 
-(2r+s) \infty\sim 0.
$$
\end{proof}

\subsection{Vector fields}
We give expressions for
differential operators on $\cS^k X$
or invariant vector fields on the Jacobian
using the coordinates 
of the Abel map.
We use the convention that for $P_a \in \Xrs$,
$P_a$ is expressed by $(x_a, y_{{{\hr}},a}, y_{{{\hs}},a})$
or $(x_{P_a}, y_{{{\hr}},{P_a}}, y_{{{\hs}},{P_a}})$.
By letting $(u_1,...,u_g) :=\abl(P_1, \cdots, P_{g})$, we have
\begin{equation}
\begin{pmatrix}
\partial/\partial{u_1}\\
\partial/\partial{u_2}\\
\partial/\partial{u_3}\\
\vdots\\
\partial/\partial{u_{g}}
\end{pmatrix}
=
{\hPsig{g}}^{-1}(P_1, P_2, \ldots, P_g)
\begin{pmatrix}
3y_{{{\hr}},1}y_{{{\hs}},1} \partial/\partial{x_1}\\
3y_{{{\hr}},2}y_{{{\hs}},2} \partial/\partial{x_2}\\
3y_{{{\hr}},3}y_{{{\hs}},3} \partial/\partial{x_3}\\
\vdots\\
3y_{{{\hr}},g}y_{{{\hs}},g} \partial/\partial{x_{g}}
\end{pmatrix}.
\end{equation}
 %In other words,
%$$
%\sum_{i=1} \epsilon_i\frac{\partial}{\partial u_i}
%=
%|{\hPsig{g}}^{-1}(P_1,\ldots,P_g)|
%\left|
%\begin{matrix}
%\phiH{0}(P_1) & \phiH{1}(P_1) & \cdots & \phiH{g-1}(P_1) &
%3y_{{{\hr}},1}y_{{{\hs}},1} \partial/\partial{x_1}\\
%\phiH{0}(P_2) & \phiH{1}(P_2) & \cdots & \phiH{g-1}(P_2) &
%3y_{{{\hr}},2}y_{{{\hs}},2} \partial/\partial{x_2}\\
%%\phiH{0}(P_3) & \phiH{1}(P_3) & \cdots & \phiH{g-1}(P_3) &
%%3y_{{{\hr}},3}y_{{{\hs}},3} \partial/\partial{x_3}\\
%\vdots & \vdots & \ddots & \vdots & \vdots \\
%\phiH{0}(P_{g}) & \phiH{1}(P_{g}) & \cdots  & \phiH{g-1}(P_{g}) &
%3y_{{{\hr}},4}y_{{{\hs}},4} \partial/\partial{x_{g}}\\
%\epsilon_1 & \epsilon_2 & \cdots & \epsilon_{g} &
%\end{matrix}
%\right|.
%$$
 These relations hold for the image of 
the shifted Abel map, thus
 similar relations involving submatrices of $\widehat\Psi_{g}$ hold
over the strata
$\abl_s(\cS^k \Xrs)$ for $k<g$;
also:
\begin{equation}
          \sum_{i, j=1}^{g} \phiH{i-1}(P_1) \phiH{j-1}(P_2)
	\frac{\partial^2 }
{\partial \abl_i(P_1)\partial \abl_j(P_2)}
= 9 y_{{{\hr}},1} y_{{{\hs}},1} y_{{{\hr}},2} y_{{{\hs}},2}
	\frac{\partial^2 }{\partial x_1\partial x_2}.
\label{eq:partial_H4}
\end{equation}

\subsection{The sigma function}\label{sigma}
In genus one, the Weierstrass sigma function \cite[21$\cdot$ 43]{ww}
$$
\sigma (u) =\frac{2\omega_1}{\theta^\prime}
\exp\left\{\frac{\eta_1}{2\omega_1}u^2\right\}
\theta_1\left(\frac{u}{2\omega_1}|\frac{\omega_2}{\omega_1}\right),
$$
where $\theta_1$ is the theta function with characteristics
$\displaystyle\left[\begin{matrix} 1/2\\ 1/2\end{matrix}\right]$,
$\theta_1^\prime:=\theta_1^\prime(0|\frac{\omega_2}{\omega_1}))$ 
is ``equivalent'' in the sense of \cite[Ch. VI]{lang} 
to a first-order theta function with characteristics,
and the complete integrals of first and second kind, $\omega_1,\omega_2$
and $\eta_1,\eta_2$ satisfy the Legendre relation: 
$\eta_1\omega_2-\eta_2\omega_1={\pi\imath /2}.$
Theta functions satisfy a given relation with respect to the period lattice, 
they are called equivalent when they differ by a ``trivial'' theta function,
and \cite[Ch. X Th. 1.1]{lang} shows that there is a unique normalized entire
theta function representing  on $\mathbb{C}^g$ 
the inverse image under $\kappa$ of a positive
divisor on $\mathbb{C}^g/\Gamma$. Unlike theta,
 $\sigma$ has
modular invariance (under a different choice of basis of $\Gamma$),  up
to a root of unity. To generalize Weierstrass' sigma to higher genus,
the authors of \cite{bel, ble} 
use the ``fundamental 2-differential of the second kind'' \cite[2.2.3]{bel},
to write a suitable basis of $H^1(X,\mathbb{C})$ which satisfies the
``generalized Legendre relations''.
We obtained explicit expressions for the trigonal cyclic curve $(X,P)$
and wrote sigma in \cite{KMP13}, for a (3,7,8) curve; for the general trigonal
cyclic case, the proofs follow the same lines as those in 
\cite[Section 5]{KMP13}, and we omit them.

We write the periods:
\begin{equation}
\begin{split}
   \left[\,\etap{}  \ \etapp{}  \right]&:=
\frac{1}{2}\left[\int_{\alpha_{i}}\nuII_{j} \ \
                 \int_{\beta_{i}}\nuII_{j}
\right]_{i,j=1,2,\cdots, g}.
   \label{eq2.5}
\end{split}
  \end{equation}

 %Let $\tau_{Q_1, Q_2}$ be the 
%differential of the third kind that
%has residues $+1$, $-1$ at ${Q_1, Q_2}$,
%is regular everywhere else,  and is  normalized,
% $\int_{\alpha_i} \tau_{P, Q} = 0$ for every $i$.
%The following Lemma corresponds
% to  Corollary 2.6 (ii) in \cite{fay}:
%\begin{lemma} \label{lemma:4.1}
%$$
%{\Omega}^{P_1, P_2}_{Q_1, Q_2} =
%\int^{P_1}_{P_2}
%\tau_{Q_1, Q_2} + \sum_{i, j = 1}^g \gamma_{ij}
%\int^{P_1}_{P_2} \nuI_{i} \int^{Q_1}_{Q_2} \nuI_{j},
%$$
%where $\gamma{} = {\omegap{}}^{-1} \etap{}$ has entries $\gamma_{ij}$.
%\end{lemma}

\begin{proposition}
The following matrix satisfies the  {\it generalized Legendre relation}:
\begin{equation}
   M := \left[\begin{array}{cc}2\omegap{} & 2\omegapp{} \\
               2\etap{} & 2\etapp{}
     \end{array}\right], \quad
  M\left[\begin{array}{cc} & -I_g \\ I_g & \end{array}\right]{}^t {M}
  =2\pi\sqrt{-1}\left[\begin{array}{cc} & -I_g \\ I_g &
    \end{array}\right],
\end{equation}
where $I_g$ is the $g\times g$ unit matrix.
\end{proposition}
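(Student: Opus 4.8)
```latex
\begin{proof}[Proof proposal]
The plan is to derive the generalized Legendre relation from the bilinear
relations among the differentials $\nuI_i$ and $\nuII_i$, together with Stokes'
theorem on the cut-open surface. The key identity that feeds the computation is
the one established in Proposition \ref{prop:dSigma}: the antisymmetrized
derivative of $\Sigma$ decomposes as $\sum_i(\nuI_i(Q)\otimes\nuII_i(P) -
\nuI_i(P)\otimes\nuII_i(Q))$, which is precisely the statement that the
collection $\{\nuI_i,\nuII_j\}$ is a symplectic-type basis of $H^1(X,\CC)$ with
respect to the cup product. Concretely, I would record the four families of
period integrals over the intersection form: for two closed one-forms
$\omega,\eta$ on $X$, $\int_X\omega\wedge\eta = \sum_{k}\left(\int_{\alpha_k}
\omega\int_{\beta_k}\eta - \int_{\beta_k}\omega\int_{\alpha_k}\eta\right)$,
applied to all pairs drawn from $\{\nuI_i\}\cup\{\nuII_j\}$.

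The main steps are then: (1) Since $\nuI_i$ and $\nuII_j$ are all of the first
or second kind, $\int_X\nuI_i\wedge\nuI_j = 0$, $\int_X\nuII_i\wedge\nuII_j =
0$, and $\int_X\nuI_i\wedge\nuII_j$ is the residue pairing, which by the
construction of $\nuII_j$ as the ``dual'' second-kind differentials (via the
expansion \eqref{expansion} of $\Omega$ and Corollary \ref{cor:Sigma}) equals
$2\pi\sqrt{-1}\,\delta_{ij}$, up to the normalization constant fixed by the
$1/(t_{P_1}-t_{P_2})^2$ leading term. (2) Translate these three batches of
scalar identities into a single matrix equation. Writing
$\omega' = \tfrac12[\int_{\alpha_i}\nuI_j]$, etc., as in \eqref{eq:42.4} and
\eqref{eq2.5}, the vanishing of the $\nuI$–$\nuI$ periods says $2\omega'\,{}^t(2\omega'') = 2\omega''\,{}^t(2\omega')$ (symmetry), the vanishing of the
$\nuII$–$\nuII$ periods gives the analogous symmetry for $\eta',\eta''$, and
the residue pairing gives $2\omega'\,{}^t(2\eta'') - 2\omega''\,{}^t(2\eta') =
2\pi\sqrt{-1}\,I_g$. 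Assembling $M$ as stated, these are exactly the block
entries of $M\begin{bmatrix}&-I_g\\ I_g&\end{bmatrix}{}^tM = 2\pi\sqrt{-1}
\begin{bmatrix}&-I_g\\ I_g&\end{bmatrix}$.

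The one step requiring genuine care is fixing the normalization of the
$\nuI_i$–$\nuII_j$ residue pairing; this is where the precise leading
coefficient in \eqref{expansion} and the factor $3y_\hr y_\hs$ in the
denominators of $\nuI$ and $\nuII$ must be tracked so that the off-diagonal
blocks produce $2\pi\sqrt{-1}\,I_g$ and not some other constant multiple of the
identity. This reduces to a local residue computation at $\infty$: one checks
that $\res_\infty \nuII_j\cdot(\text{the normalized integral of }\nuI_i)$
contributes $\delta_{ij}$, using Lemma \ref{lmm:Ng-1} to match the pole order
of $\phiH{i-1}$ with the order of vanishing of the antiderivative. The
remaining verifications — that second-kind differentials integrate to zero
against each other over $X$, and the symmetry of the $\omega$ and $\eta$ blocks
— are the standard Riemann bilinear argument on the $4g$-gon, essentially
identical to \cite[Section 5]{KMP13}, so I would invoke that computation rather
than repeat it. I expect the bookkeeping of signs and the factor of $2\pi
\sqrt{-1}$ in step (1) to be the only real obstacle; everything else is
formal linear algebra once the pairing values are in hand.
\end{proof}
```
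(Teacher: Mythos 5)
Your proposal is correct in outline and is essentially the argument the paper intends: the paper itself gives no proof of this proposition, explicitly omitting it with the remark that ``the proofs follow the same lines as those in [KMP13, Section 5],'' and what you describe — Riemann bilinear relations on the cut-open surface applied to the pairs drawn from $\{\nuI_i\}\cup\{\nuII_j\}$, with the off-diagonal block fixed by the residue pairing coming from the normalization \eqref{expansion} of $\Omega$ via Proposition \ref{prop:dSigma} — is exactly that standard computation. The only phrasing to tighten is the claim that $\int_X\nuII_i\wedge\nuII_j=0$ ``since they are of the second kind'': for second-kind differentials the bilinear relation produces a residue term $\mathrm{res}_\infty(f_i\,\nuII_j)$ whose vanishing is not automatic but follows from the symmetry \eqref{eq3.1.6} of $\Omega$ and the pole orders of the $\nuII_j$ at $\infty$, which is part of the [KMP13] computation you invoke.
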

 %Theta characteristics  $u'$ and $u''$ in $\RR^g$ 
%correspond to a vector $u\in\CC^g$,
%\begin{equation*}
%   u=2\omegap{} u'+2\omegapp{} u''.
%\end{equation*}
 Using  theta characteristics $\delta$
as in Corollary \ref{cor:thetadivisor}, 
we define  $\sigma$ as an entire function of (a column-vector)
$u={}^t\negthinspace (u_1, u_2, \ldots, u_g)\in \mathbb{C}^g$,

\begin{equation}
\begin{aligned}
   \sigma{}(u) %&=\sigma{}(u;M)=\sigma{}(u_1, u_2, \ldots, u_g;M) \\
   &=c\,\text{exp}(-\tfrac{1}{2}{}\ ^t\negthinspace
u\etap{}{\omegap{}}^{-1}u)
   \vartheta\negthinspace
   \left[\delta\right](\frac{1}{2}{\omegap{}}^{-1} u;\
{\omegap{}}^{-1}\omegapp{}) \\
   &=c\,\text{exp}(-\tfrac{1}{2}\ ^t\negthinspace
u\etap{}{\omegap{}}^{-1}\  u) \\
   &\hskip 20pt\times
   \sum_{n \in \ZZ^g} \exp \big[\pi \sqrt{-1}\big\{
    \ ^t\negthinspace (n+\delta'')
      {\omegap{}}^{-1}\omegapp{}(n+\delta'')
   + \ ^t\negthinspace (n+\delta'')
      ({\omegap{}}^{-1} u+2\delta')\big\}\big],
\end{aligned}
   \label{de_sigma}
\end{equation}
where  $c$ is a certain constant, in fact a rational
function of the $b$'s in the equation of the curve (Subsection
\ref{singular}). This function has the required modular invariance and
 is homogeneous with respect to the extended weight of
$\wt_\lambda$.

\begin{proposition} \label{prop:pperiod}
For $u$, $v\in\CC^g$, and $\ell$
$(=2\omegap{}\ell'+2\omegapp{}\ell'')$ $\in\Gammars)$, if we define
$
  L(u,v)    :=2\ {}^t{u}(\etap{}v'+\etapp{}v''),\
  \chi(\ell):=\exp[\pi\sqrt{-1}\big(2({}^t {\ell'}\delta''-{}^t
  {\ell''}\delta') +{}^t {\ell'}\ell''\big)],$
the following holds
\begin{equation}
	\sigma{}(u + \ell) =
\sigma{}(u) \exp(L(u+\frac{1}{2}\ell, \ell)) \chi(\ell).
        \label{eq:4.11}
\end{equation}
\end{proposition}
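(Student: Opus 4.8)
The statement is the standard quasi-periodicity law for the sigma function under translation by a lattice vector $\ell = 2\omega'\ell' + 2\omega''\ell'' \in \Gamma$, and the natural route is to transport the known transformation law of the Riemann theta function with characteristics through the explicit formula \eqref{de_sigma}. The plan is to write $\sigma(u)$ as the product of the Gaussian exponential factor $e(u):=c\,\exp(-\tfrac12\,{}^t\!u\,\eta'\omega'^{-1}u)$ and the theta factor $\Theta(u):=\vartheta[\delta](\tfrac12\omega'^{-1}u;\,\tau)$ with $\tau=\omega'^{-1}\omega''$, and then to compute the ratios $e(u+\ell)/e(u)$ and $\Theta(u+\ell)/\Theta(u)$ separately.

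First I would handle the theta factor. Set $z:=\tfrac12\omega'^{-1}u$, so that $z+\ell':=\tfrac12\omega'^{-1}(u+2\omega'\ell')$ and the $\omega''$-part of $\ell$ shifts $z$ by $\tau\ell''$. The classical transformation law for theta with characteristics gives $\vartheta[\delta](z+\ell'+\tau\ell'';\tau) = \vartheta[\delta](z;\tau)\,\exp\big[\pi\sqrt{-1}(-2\,{}^t\!\ell''z - {}^t\!\ell''\tau\ell'' + 2\,{}^t\!\ell'\delta'' - 2\,{}^t\!\ell''\delta')\big]$ (up to the $\exp(\pi\sqrt{-1}\,{}^t\!\ell'\ell'')$ ambiguity that is absorbed into $\chi$). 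Substituting back $z=\tfrac12\omega'^{-1}u$ and $\tau=\omega'^{-1}\omega''$, the linear term $-2\pi\sqrt{-1}\,{}^t\!\ell''z$ becomes $-\pi\sqrt{-1}\,{}^t\!\ell''\omega'^{-1}u$, and the quadratic term becomes $-\pi\sqrt{-1}\,{}^t\!\ell''\omega'^{-1}\omega''\ell''$; the character piece is exactly $\chi(\ell)$ after the $\tfrac12$ normalization.

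Next the Gaussian factor: $e(u+\ell)/e(u) = \exp\big(-\tfrac12\,{}^t\!(u+\ell)\eta'\omega'^{-1}(u+\ell) + \tfrac12\,{}^t\!u\,\eta'\omega'^{-1}u\big) = \exp\big(-{}^t\!\ell\,\eta'\omega'^{-1}u - \tfrac12\,{}^t\!\ell\,\eta'\omega'^{-1}\ell\big)$, using that $\eta'\omega'^{-1}$ is symmetric — which is itself a consequence of the generalized Legendre relation (the $(1,1)$-block of $M\begin{psmallmatrix}&-I\\I&\end{psmallmatrix}{}^tM$), so I would record that symmetry first. Then I would expand $\ell=2\omega'\ell'+2\omega''\ell''$ inside both terms. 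The heart of the computation is to check that the $u$-linear pieces coming from the two factors combine into $\exp(L(u,\ell)) = \exp\big(2\,{}^t\!u(\eta'\ell' + \eta''\ell'')\big)$ and the $u$-independent pieces into $\exp(L(\tfrac12\ell,\ell))\,\chi(\ell)$ up to the theta character already accounted for. For the linear part one uses $-{}^t\!\ell\,\eta'\omega'^{-1}u = -2\,{}^t\!(\omega'\ell'+\omega''\ell'')\eta'\omega'^{-1}u$ from $e$, together with $-\pi\sqrt{-1}\,{}^t\!\ell''\omega'^{-1}u$ from $\Theta$; matching these against $2\,{}^t\!u(\eta'\ell'+\eta''\ell'')$ forces precisely the identities $\omega'^{-1}\eta' $ symmetric and ${}^t\!\omega'\eta'' - {}^t\!\omega''\eta' = \pi\sqrt{-1}/2 \cdot I_g$ (equivalently the other blocks of the Legendre relation), which is exactly where that Proposition gets used.

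The main obstacle is purely bookkeeping: keeping the transposes, the factors of $2$ from the $2\omega'$ normalization, and the factors of $\pi\sqrt{-1}$ straight, and invoking the correct block of the generalized Legendre relation at each cancellation. There is no conceptual difficulty once one commits to the substitution $z=\tfrac12\omega'^{-1}u$; the argument is, in effect, the genus-one computation of \cite[21$\cdot$43]{ww} done with matrices, and indeed for the general trigonal cyclic case the authors note the proofs mirror \cite[Section 5]{KMP13}, so I would present this as a direct computation and refer to that source for the fully expanded version.
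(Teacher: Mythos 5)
The paper itself supplies no proof of this proposition: it is stated bare, the authors having announced just above that ``for the general trigonal cyclic case, the proofs follow the same lines as those in \cite[Section 5]{KMP13}, and we omit them.'' Your argument --- splitting $\sigma$ via (\ref{de_sigma}) into the Gaussian factor $\exp(-\tfrac12\,{}^t\!u\,\etap{}\omegap{}^{-1}u)$ and the factor $\vartheta[\delta](\tfrac12\omegap{}^{-1}u;\tau)$, applying the classical quasi-periodicity of theta with characteristics under $z\mapsto z+\ell'+\tau\ell''$, and invoking the generalized Legendre relation (symmetry of $\etap{}\omegap{}^{-1}$ and the off-diagonal block) to recombine the exponents into $\exp(L(u+\tfrac12\ell,\ell))\chi(\ell)$ --- is precisely the standard computation carried out in that reference, so it coincides with the intended proof; the only outstanding work is the sign and transpose bookkeeping you already flag.
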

\subsection{The Riemann fundamental relation}
Using (\ref{eq:-1u}) to detect the divisors corresponding to
the minus-sign operation on $\cJ_{g}$,
we review a relation which we  call the Riemann fundamental
relation \cite[\S196]{baker1897}):
\begin{proposition}\label{useIntegrals}
For $(P, Q, P_i, P'_i) \in \tX^2 \times (\cS^g(\tX)\setminus \cS^g_1(\tX))
 \times (\cS^g(\tX)\setminus \cS^g_1(\tX))$,
$
        u  := \Abl_s(P_1, \ldots, P_g),   \quad
        u'  := \Abl_s(P'_1, \ldots, P'_g),   \quad
$
$$
\exp\left(
\sum_{i, j = 1}^g
   {\Omega}_{\varpi P_i, \varpi P'_j}^{\varpi P, \varpi Q} \right)
=
\frac{\sigma{}(\Abl(P) - u) \sigma{}(\Abl(Q) - u')}
     {\sigma{}(\Abl(Q) - u) \sigma{}(\Abl(P) - u')}$$
 %&=\frac{\sigma{}(\Abl(P) - \Abl_s(P_1, \ldots, P_g))
 %       \sigma{}(\Abl(Q) - \Abl_s(P'_1, \ldots, P'_g))}
 %    {\sigma{}((\Abl(Q) - \Abl_s(P_1, \ldots, P_g))
 %     \sigma{}(\Abl(P) - \Abl_s(P'_1, \ldots, P'_g))}.
 %\end{align*}
\end{proposition}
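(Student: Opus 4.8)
The plan is to fix $Q$, $u=\Abl_s(P_1,\dots,P_g)$ and $u'=\Abl_s(P'_1,\dots,P'_g)$, and to regard the two sides as functions of the single variable $P$, lifted to $\tX$, where both become single--valued; I would then show that they have the same divisor of zeros and poles, that they transform in the same way under the deck group, and conclude that their ratio is a holomorphic nowhere--vanishing function on the compact curve $\Xrs$, hence a constant, which a limiting argument identifies as $1$.

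The first step is to compute the divisor of the right--hand side as a function of $P$. Since $(P_1,\dots,P_g)$ and $(P'_1,\dots,P'_g)$ lie in $\cS^g(\tX)\setminus\cS^g_1(\tX)$, the Riemann vanishing theorem in the shifted form recorded in Corollary~\ref{cor:thetadivisor} (equivalently, Proposition~\ref{prop:lew} combined with Proposition~\ref{prop:0}) shows that the entire function $z\mapsto\sigma(\Abl(z)-u)$ on $\tX$ has divisor exactly $P_1+\dots+P_g$, all zeros simple, and likewise $z\mapsto\sigma(\Abl(z)-u')$ has divisor $P'_1+\dots+P'_g$; the two factors involving $Q$ are constant in $P$. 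Hence the right--hand side, as a function of $P$, has divisor $\sum_iP_i-\sum_iP'_i$.

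The second step is to match this on the left. Differentiating the logarithm of the left--hand side in $P$ turns the iterated integrals into $\sum_{i}\int_{\varpi P'_i}^{\varpi P_i}\Omega(P,\,\cdot\,)$; because $\Omega$ is symmetric (\ref{eq3.1.6}), has its only singularity the second--order pole along the diagonal with expansion (\ref{expansion}), and is realized rationally in Corollary~\ref{cor:Sigma}, this is a differential of the third kind on $\Xrs$ with first--order poles of residue $+1$ at $P_1,\dots,P_g$ and $-1$ at $P'_1,\dots,P'_g$ and no others. Exponentiating the integral therefore produces a function of $P$ with the same divisor $\sum_iP_i-\sum_iP'_i$ found in step one, provided one keeps the paths of integration and the lifts of the $P_i,P'_j$ consistent so that the zeros and poles occur at the same points of $\tX$ as on the right. (For $r=0$ one substitutes the reduced forms of $\Sigma$ and $\Omega$ from Remark~\ref{rmk:Sigma} and Corollary~\ref{cor:Sigma}, and nothing else changes.)

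The remaining step, which I expect to be the main obstacle, is to show that the ratio of the two sides is invariant under the period lattice $\Gamma$, i.e.\ that both sides carry the same system of multipliers, so that the ratio descends to $\Xrs$. On the right one inserts $\Abl(P)-u$ and $\Abl(P)-u'$ into the quasi--periodicity law of Proposition~\ref{prop:pperiod}: since $u$ and $u'$ each appear once in the numerator and once in the denominator of the four--fold ratio, the characters $\chi(\ell)$ cancel, as do the purely $\ell$--dependent and quadratic parts of $L$, and what survives is the exponential of a bilinear expression in $\Abl(P)-\Abl(Q)$, in $u-u'$, and in the half--period matrices $\omegap{},\omegapp{},\etap{},\etapp{}$. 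On the left the corresponding multiplier is computed from the $\alpha$-- and $\beta$--periods of $\Omega$ together with those of the $\nuI_i$ and $\nuII_i$; the generalized Legendre relation satisfied by $M$ is precisely the identity among $\omegap{},\omegapp{},\etap{},\etapp{}$ that forces the two exponential factors to agree. Granting this, the ratio is holomorphic and nowhere zero on the compact $\Xrs$, hence constant, and letting $P\to Q$ along a path in $\tX$ makes the outer integral $\int_{\varpi Q}^{\varpi P}$ vanish, so the left side tends to $1$ and the right side equals $1$ by inspection; the constant is $1$. The identity is the trigonal cyclic case of Baker's relation \cite[\S196]{baker1897}, cf.\ \cite[Ch.~I]{fay}.
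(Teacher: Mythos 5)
The paper offers no proof of Proposition \ref{useIntegrals}: it is ``reviewed'' with a citation to \cite[\S196]{baker1897} and covered by the blanket remark that proofs follow \cite{KMP13, MP08}. So there is no internal argument to compare against; your plan --- match the divisor in $P$ of both sides using the shifted Riemann vanishing theorem (Propositions \ref{prop:lew}, \ref{prop:0}, Corollary \ref{cor:thetadivisor}) and the third-kind behaviour of $\int\Omega$, match the $\Gamma$-multipliers via Proposition \ref{prop:pperiod} and the generalized Legendre relation, and fix the constant by letting $P\to Q$ --- is precisely the classical Baker--Fay argument, and each ingredient you invoke is the right one for this curve.

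There is, however, one concrete point you elide rather than confront. The proposition is printed with a double sum $\sum_{i,j=1}^{g}\Omega^{\varpi P,\varpi Q}_{\varpi P_i,\varpi P'_j}$, but in your second step you differentiate ``$\sum_{i}\int_{\varpi P'_i}^{\varpi P_i}\Omega(P,\cdot)$'', i.e.\ the diagonal single sum. These are not the same: in the double sum each $P_i$ appears in $g$ of the inner integrals (once for every $j$), so the resulting third-kind differential has residue $+g$ at each $P_i$ and $-g$ at each $P'_j$, and after exponentiation the left-hand side acquires zeros and poles of order $g$, which cannot match the simple zeros and poles of the right-hand side once $g\ge 2$; equivalently, the double-sum left-hand side is the $g$-th power of what your computation produces. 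The identity your argument actually establishes is the one with $\sum_{i=1}^{g}\Omega^{\varpi P,\varpi Q}_{\varpi P_i,\varpi P'_i}$, which is the correct classical statement and agrees with the printed one only for $g=1$. You should say explicitly that the printed double index must be read as $j=i$ (your own degree count is the cleanest evidence that this is a misprint), otherwise the proof and the statement are about different quantities. Beyond that, the multiplier step you defer to the generalized Legendre relation is routine but is where the specific normalization of the $\nu^{II}_i$ from Proposition \ref{prop:dSigma} enters, so it deserves to be written out rather than asserted.
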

\begin{proposition} \label{prop:wpxx=F}
For $(P, P_1, \ldots, P_g) \in X \times \cS^g(X) \setminus \cS^g_1(X)$
and $u \in \kappa^{-1} \abl_s(P_1, \ldots, P_g)$,
the equality,
$$
        \sum_{i, j = 1}^g \wp_{i, j}
          \left(\tw(\iota P) - u\right)
         \phiH{i-1}(P)
         \phiH{j-1}(P_a) =\frac{F(P, P_a)}{(x-x_a)^2},
$$
holds for every $a = 1, 2, \ldots, g$,
where we set %$v\in \kappa^{-1}u \subset \CC^g$,
$$
        \wp_{ij}(u) := -\frac{\sigma_{i}(u) \sigma_{j}(u)
              -  \sigma{}(u) \sigma_{ij}(u)}
               {\sigma{}(u)^2}\equiv
        -\frac{\partial^2}{\partial u_i\partial u_j}\log\sigma{}(u),
$$
$$
\sigma_{i_1,i_2,\ldots,i_n}(u)
:=
\prod_{j=1}^n \frac{\partial}{\partial u_{i_j}}\sigma{}(u).
$$
\end{proposition}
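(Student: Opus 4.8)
The plan is to obtain the identity by differentiating the Riemann fundamental relation of Proposition~\ref{useIntegrals} twice. First a remark on well-posedness: by the quasi-periodicity in Proposition~\ref{prop:pperiod}, $\log\sigma(u+\ell)-\log\sigma(u)$ is an affine-linear function of $u$ for every $\ell\in\Gammars$, so $\wp_{ij}(u)=-\partial_{u_i}\partial_{u_j}\log\sigma(u)$ is $\Gammars$-periodic and the left-hand side depends only on $\kappa^{-1}\abl_s(P_1,\ldots,P_g)$, as the statement requires. Now fix generic $(P_i),(P'_i)\in\cS^g(\tX)\setminus\cS^g_1(\tX)$ and a generic auxiliary point $Q$, and take logarithms in Proposition~\ref{useIntegrals}: the right-hand side becomes $\log\sigma(\Abl(P)-u)+\log\sigma(\Abl(Q)-u')-\log\sigma(\Abl(Q)-u)-\log\sigma(\Abl(P)-u')$, with $u=\Abl_s(P_1,\ldots,P_g)$ and $u'=\Abl_s(P'_1,\ldots,P'_g)$.

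Next I would apply $\partial^2/\partial x_P\,\partial x_{P_a}$ to this logarithmic identity, $x_P$ being the $x$-coordinate of the free point $P$ and $x_{P_a}$ that of the $a$-th divisor point $P_a$. On the right, three of the four terms are killed: $\log\sigma(\Abl(Q)-u')$ depends on neither $P$ nor $P_a$; $\log\sigma(\Abl(Q)-u)$ is independent of $P$, so $\partial/\partial x_P$ annihilates it; and $\log\sigma(\Abl(P)-u')$ is independent of $x_{P_a}$ for generic $P'_i$, so $\partial/\partial x_{P_a}$ annihilates it. In the surviving term $\log\sigma(\Abl(P)-u)$ one expands by the chain rule using $\partial\Abl_k(P)/\partial x_P=\phiH{k-1}(P)/(3y_{\hr,P}y_{\hs,P})$ and $\partial u_k/\partial x_{P_a}=\phiH{k-1}(P_a)/(3y_{\hr,P_a}y_{\hs,P_a})$ --- both read off from Proposition~\ref{def:holo1-form}, the $\fB$-shift in $u$ being constant in $P$ and $P_a$ --- and, by the very definition of $\wp_{ij}$, the right-hand side collapses to $(9\,y_{\hr,P}y_{\hs,P}\,y_{\hr,P_a}y_{\hs,P_a})^{-1}\sum_{i,j=1}^g\wp_{ij}(\Abl(P)-u)\,\phiH{i-1}(P)\,\phiH{j-1}(P_a)$. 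Equivalently one may repackage the two derivatives through the vector-field identity (\ref{eq:partial_H4}).

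On the left, of the $\Omega$-contributions only the one whose inner integral has $P_a$ as an endpoint and whose outer integral has $P$ as an endpoint, namely $\int_{\varpi Q}^{\varpi P}\int_{\varpi P'_a}^{\varpi P_a}\Omega$, survives $\partial^2/\partial x_P\,\partial x_{P_a}$; applying the fundamental theorem of calculus in each variable turns it into the coefficient of $dx_P\otimes dx_{P_a}$ in the two-form $\Omega(P,P_a)$, which Corollary~\ref{cor:Sigma}(2) evaluates to $F(P,P_a)\,(9(x_P-x_{P_a})^2 y_{\hr,P}y_{\hs,P}y_{\hr,P_a}y_{\hs,P_a})^{-1}$. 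Equating the two sides and cancelling the common factor $(9\,y_{\hr,P}y_{\hs,P}y_{\hr,P_a}y_{\hs,P_a})^{-1}$ gives the asserted identity on $\Xrs\setminus\infty$, and it extends to all $P\in\Xrs$ by continuity. The whole argument runs parallel to \cite[Section~5]{KMP13}.

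I expect the main obstacle to be bookkeeping rather than anything conceptual: one has to justify differentiating the multivalued iterated integrals over the fundamental cover $\tX$ term by term (integrability of $\Omega$ off the diagonal, path-independence), and to check that the three ``dead'' $\sigma$-terms carry no hidden dependence on $x_P$ or $x_{P_a}$ --- which is precisely where the hypotheses $(P_i),(P'_i)\notin\cS^g_1$, the avoidance of $\infty$, and the non-coincidence of $P$, $Q$ and the $P_i,P'_j$ enter. Once that dependence pattern is settled, the two computational inputs --- the chain-rule expansion of $\log\sigma$ and the explicit shape of $\Omega$ from Corollary~\ref{cor:Sigma}(2) --- close the proof.
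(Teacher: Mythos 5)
Your proposal is correct and takes essentially the same route as the paper's own (much terser) proof: establish $\Gamma$-periodicity of $\wp_{ij}$ from the quasi-periodicity of $\sigma$, take logarithms in the Riemann fundamental relation of Proposition \ref{useIntegrals}, and differentiate in $x_P$ and $x_{P_a}$ via the vector-field identity (\ref{eq:partial_H4}), identifying the surviving term with $\Omega(P,P_a)$ through Corollary \ref{cor:Sigma}. The only difference is that you spell out the bookkeeping (which cross terms die, the chain rule through the Abel map) that the paper leaves implicit.
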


\begin{proof}
First we note that the above $\wp$ functions are defined over
the Jacobian $\cJ$, namely
$\wp_{ij}(u) = \wp_{ij}(u + \ell)$ for $\ell \in \Gamma$, because of the
functional relation satisfied by $\sigma$.
Using the property of the vector fields in (\ref{eq:partial_H4}) and
taking logarithm of both sides in the Riemann
fundamental relation and differentiating
along $P_1=P$ and $P_2=P_a$, we obtain the claim.
\end{proof}

By translation-invariance of
the $\wp$-function, we may view
the domain as subset of  $\JJ$; more precisely,
$\wp_{ij}(u) := \wp_{ij}(u')$ for $u' \in \CC^g$ and $u := \kappa u
\in \JJ$.

\section{Vanishing on Wirtinger strata and 
Jacobi inversion}\label{results}

Using the subvarieties (\ref{eq:W^k}), we let 
\begin{equation}
	\Theta_s^{k} := \WW_s^{k} \cup [-1] \WW_s^{k},
\label{eq:Theta:k}
\end{equation}
and we define
\begin{equation}
	\Theta^{k}_{s,1} := w_s(\cS^{k}_1 (X)) \cup [-1] w_s(\cS_1^{k}(X)).
\label{eq:Theta:k1}
\end{equation}

For a Young diagram $\lambda=(\lambda_1, \ldots, \lambda_n)$,
the Schur function
$s_\lambda$ are defined by the ratio of determinants 
of $n\times n$ matrices \cite{Mac},
$$
	s_{\lambda}(T) = \frac{|t_{i}^{\lambda_j+n-j}|}{|t_i^{j-1}|}
$$
where $t=^t(t_1,\ldots, t_n)$ are the transpose of the rows.
We also regard it as a function of
$\displaystyle{
          T=^t(T_1,\ldots,T_n), \quad
T_k :=\frac{1}{k}\sum_{i=1}^g t_j^k,
}$
 denoted by
$
	S_{\lambda}(T) = s_{\lambda}(t).
$

Following a formula proven for the  rational/polynomial case  \cite{bel2},
Nakayashiki showed the following for the case of an $(n,s)$ curve
\cite{Na} 
and a general Riemann surface 
\cite[Theorem 10]{N3}:

\begin{proposition} \label{prop:Schur0}
With the notation of
 Proposition \ref{prop:lew}, for a general non-singular curve
$X$, we have the following results:
For 
$e +\Delta\in (\nkappa )^{-1}\nTheta$ 
and for $u \in \CC^g$,
\begin{equation}
C\theta\left( (2\omega^{\prime})^{-1} u+ e,\tau\right) = 
S_\Lambda(T)|_{T_{\Lambda_i+g-i}=u_i}+\mbox{higher-weight terms},
\end{equation}
where $C$ is a suitable constant.
\end{proposition}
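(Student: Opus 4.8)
The plan is to reduce this statement to the two cited results of Nakayashiki (\cite{Na} for $(n,s)$ curves, \cite[Theorem 10]{N3} for a general Riemann surface) by checking that the data we have constructed for trigonal cyclic curves matches the hypotheses there. The key observation is that the Schur-function expansion of theta at the Riemann constant is governed entirely by the Weierstrass semigroup $H(X,P)$ through its Young diagram $\Lambda=(\Lambda_1,\dots,\Lambda_g)$, which for us is the non-symmetric diagram of the 3-semigroup $\langle 3,\hr,\hs\rangle$ described in Section \ref{weierstrass}. So the first step is to recall that Nakayashiki's theorem is stated for an arbitrary compact Riemann surface with a chosen point $P$, a local coordinate at $P$, and the associated gap sequence; one takes the Taylor expansion of $\theta\bigl((2\omega')^{-1}u+e,\tau\bigr)$ in the flat coordinates $u$ adapted to the basis $\nuI_i$ of $H^0(X,\Omega_X)$ (Proposition \ref{def:holo1-form}), assigns to $u_i$ the weight $\wt(u_i)=\Lambda_{g-i+1}+(g-i)$ introduced just before \eqref{eq:W^k}, and reads off that the lowest-weight part is exactly $S_\Lambda(T)$ with the substitution $T_{\Lambda_i+g-i}=u_i$.

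Concretely I would carry out the following steps. First, verify that $e+\Delta\in(\nkappa)^{-1}\nTheta$ places us at the vanishing locus of theta, so that the expansion has no constant term and the leading term is a genuine polynomial in $u$ of positive weighted degree — this is where Proposition \ref{prop:lew}(1) enters, identifying $\v(\cS^{g-1}X)$ as a translate of $\nTheta$ by the Riemann constant. Second, match the weight filtration: the weights $\{\Lambda_i+g-i\}_{i=1,\dots,g}$ are precisely the gap sequence $L$ (since $\Lambda_i=\alpha_{g-i}+1$ and $\alpha_j=\ell_j-j-1$, one has $\Lambda_i+g-i=\ell_{g-i}$), so the power-sum variables $T_k$ that appear are indexed by the gaps, exactly as in Nakayashiki's formula. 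Third, invoke \cite[Theorem 10]{N3} verbatim: its hypothesis is only that $X$ be a general (i.e.\ arbitrary nonsingular) compact Riemann surface with a marked point, which our $\Xrs$ with the point $\infty$ satisfies, and its conclusion is the displayed identity up to the constant $C$. The $(n,s)$ case \cite{Na} is then the specialization $r=0$ (cf.\ Remark \ref{rmk:3rs_3s}), consistent with our running conventions.

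The main obstacle — really the only nontrivial point — is making sure the \emph{normalization} of the Abel map and of the theta argument used in \cite{N3} agrees with the one in this paper. Nakayashiki works with the normalized Abel map $\tv$ and the normalized period lattice $\nGamma$, whereas our natural coordinate is the unnormalized $\tw$ with $\tv=(2\omega')^{-1}\tw$; the factor $(2\omega')^{-1}$ appearing in the statement is precisely the change of variables that reconciles the two, and one must check that under this linear change the weight-grading is preserved, i.e.\ that $(2\omega')^{-1}$ is lower-triangular with respect to the weight filtration on $\CC^g$ up to a diagonal rescaling. This follows from the fact that $\omega'$ records the $\alpha$-periods of the weight-homogeneous basis $\{\nuI_i\}$ and hence respects the same filtration; it is the content of the remark preceding \eqref{eq:W^k} that the unnormalized coordinates carry a natural weight while the normalized ones do not, so the "higher-weight terms'' on the right-hand side absorb exactly the off-diagonal contributions. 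Once this compatibility is recorded, the proposition is immediate from the cited theorems, and — as the authors indicate for the rest of the sigma-function material — the detailed verification parallels \cite[Section 5]{KMP13}.
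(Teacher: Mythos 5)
Your approach coincides with the paper's: Proposition \ref{prop:Schur0} is not proved in the text but simply quoted from Nakayashiki (\cite{Na} for $(n,s)$ curves, \cite[Theorem 10]{N3} in general), so reducing the statement to those theorems after matching the data --- the marked point $\infty$, the weight-homogeneous basis $\nuI_i$, and the identification $\Lambda_i+g-i=\ell_{g-i}$ of the weights with the gap sequence --- is exactly the intended argument, and your verification of that identification is correct. (The paper's subsequent remark notes that \cite{MP14} gives an alternative derivation not relying on \cite{N3}, but that is presented as an aside, not as the proof.)

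One caveat: the step you single out as the ``only nontrivial point'' rests on a claim that is both unnecessary and false. The matrix $(2\omega')^{-1}$ is \emph{not} lower-triangular (even up to diagonal rescaling) with respect to the weight filtration --- recording the $\alpha$-periods of a weight-homogeneous basis imposes no such structure, and the paper itself stresses that the normalized Abel-map coordinates carry no natural weight. No such compatibility is needed: Nakayashiki's statement is formulated directly for the composite $u\mapsto\theta\bigl((2\omega')^{-1}u+e,\tau\bigr)$ as a function of the unnormalized variables $u$, which is precisely the form quoted in the proposition, so the weight-grading never has to pass through $(2\omega')^{-1}$ at all.
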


\bigskip

We note that for  $e$ in Proposition \ref{prop:Schur0},
$2\omega'e +\tw(\iota\fB) \in \kappa^{-1}\Theta_s^{g-1}$ and thus
$$
C\theta \left[\begin{matrix}\delta'\\ \delta''\end{matrix}
\right] \left( (2\omega^{\prime})^{-1} u+ e
+\tv(\iota\fB), \tau\right) = 
S_\Lambda(T)|_{T_{\Lambda_i+g-i}=u_i}+\mbox{higher-weight terms},
$$
for $\delta$ in the definition of $\sigma$ (\ref{de_sigma})
from Corollary \ref{cor:thetadivisor}.
We also note that $u+2\omega' e+ \tw(\iota\fB) \in
\tw_s(\cS^{g}\tX)=\CC^g$. 
Further since $\kappa ( 2\omega' e)$ belongs to the canonical theta divisor
$w(\cS^{g-1}\tX)$,  
by letting $e = 
-\tv(\iota(\fB))$ from Proposition \ref{prop:ws_vanishes}
(with $v(B_1, \ldots, B_{s+r}) = -v(\fB)$ and $r+s = g-1$),
we have the following result:
\begin{proposition} \label{prop:Schur}
The leading term in the Taylor expansion
of the $\sigma$ function associated with $\Xrs$, with normalized
constant factor $c$,
 is expressed by a Schur function  
$$
   \sigma{}(u) = S_{{\Lambda}}(T)|_{T_{\Lambda_i + g - i} = u_i}
            + \sum_{\alpha} a_\alpha u^\alpha,
$$
where $a_\alpha\in \QQ[b_1, \cdots, b_{s+r}]$,
$\alpha = (\alpha_1,..., \alpha_g)$
and  $u^\alpha = u_1^{\alpha_1} \cdots u_g^{\alpha_g}$.
\end{proposition}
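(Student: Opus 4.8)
The plan is to combine Proposition \ref{prop:Schur0} (Nakayashiki's Schur-function expansion of $\theta$ at the Riemann constant, specialized to a general non-singular curve) with the dictionary between $\theta$ and $\sigma$ encoded in the definition (\ref{de_sigma}), transported along the identifications $\tv=(2\omega')^{-1}\tw$ and $\tv_s=(2\omega')^{-1}\tw_s$. First I would observe that, by (\ref{de_sigma}), $\sigma(u)$ differs from $\theta[\delta]\big(\tfrac12{\omega'}^{-1}u;\ {\omega'}^{-1}\omega''\big)$ only by the overall constant $c$ and the Gaussian factor $\exp(-\tfrac12\,{}^tu\,\eta'{\omega'}^{-1}u)$; the latter contributes only higher-weight corrections (it is $1+O(u^2\cdot\text{terms of positive weight})$) and so does not affect the leading Schur term. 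Thus it suffices to identify the leading term of $\theta[\delta]\big(\tfrac12{\omega'}^{-1}u;\tau\big)$ with $S_\Lambda(T)|_{T_{\Lambda_i+g-i}=u_i}$.

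Next I would make the argument-shift explicit. Writing $u$ for the coordinate attached to the \emph{unnormalized} basis $\{\nu^I_i\}$, the point $(2\omega')^{-1}u$ is the corresponding normalized coordinate, so $\theta[\delta]\big(\tfrac12{\omega'}^{-1}u;\tau\big)=\theta\big((2\omega')^{-1}(\tfrac12 u)\cdot 2 + \delta' + \tau\delta'';\tau\big)$ up to the standard characteristic manipulation; more usefully, by Corollary \ref{cor:thetadivisor} the characteristic $\delta$ equals the shifted Riemann constant $\Delta_s=\Delta-\tv(\iota\fB)$, so $\theta[\delta](z;\tau)=\theta(z+\Delta_s;\tau)$ up to an exponential factor that again only perturbs higher-weight terms. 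Now I invoke Proposition \ref{prop:Schur0} with the choice $e=-\tv(\iota\fB)$, which is legitimate because Proposition \ref{prop:ws_vanishes} (with $v(B_1,\dots,B_{s+r})=-v(\fB)$ and $r+s=g-1$, i.e. $\abl_s$ vanishes on a $(g-1)$-tuple) exhibits $2\omega'e+\tw(\iota\fB)\in\kappa^{-1}\Theta_s^{g-1}$, equivalently $e+\Delta\in\nkappa^{-1}\nTheta$; this is exactly the hypothesis required in Proposition \ref{prop:Schur0}. Substituting, Proposition \ref{prop:Schur0} gives $C\,\theta\big((2\omega')^{-1}u+e;\tau\big)=S_\Lambda(T)|_{T_{\Lambda_i+g-i}=u_i}+(\text{higher-weight})$, and since $e=-\tv(\iota\fB)$ makes $(2\omega')^{-1}u+e$ agree (modulo the characteristic $\delta$) with the theta argument appearing in (\ref{de_sigma}), the claimed expansion for $\sigma$ follows once the constant $c$ is chosen to absorb $C$.

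Finally I would pin down two bookkeeping points. One: the weight normalization --- the Young diagram $\Lambda=(\Lambda_1,\dots,\Lambda_g)$ with $\Lambda_i=\alpha_{g-i}+1$ is precisely the one attached to $H(X,\infty)$ in Section \ref{weierstrass}, and the weight $\wt(u_i)=\Lambda_{g-i+1}+(g-i)$ introduced before (\ref{eq:W^k}) is exactly the one under which $S_\Lambda(T)|_{T_{\Lambda_i+g-i}=u_i}$ is the unique lowest-weight homogeneous piece; every exponential prefactor stripped off above ($\exp(-\tfrac12{}^tu\eta'{\omega'}^{-1}u)$ and the characteristic-induced factor) expands as $1$ plus strictly-higher-weight monomials, so none of them disturbs the leading term. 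Two: the coefficients $a_\alpha$ of the higher-weight terms are rational in $b_1,\dots,b_{s+r}$ because, as recorded after (\ref{de_sigma}), $c$ is a rational function of the $b$'s and (by the homogeneity of $\sigma$ under $\wt_\lambda$ and the fact that the $\lambda$-parameters are polynomial in the $b$'s, cf. Subsection \ref{singular}) the whole expansion has coefficients in $\QQ[b_1,\dots,b_{s+r}]$. The main obstacle is not any single computation but the careful matching of conventions --- normalized versus unnormalized Abel maps, the precise form of the characteristic shift in Corollary \ref{cor:thetadivisor}, and checking that each discarded exponential factor really is of higher weight --- all of which, as the authors note, runs parallel to \cite[Section 5]{KMP13}.
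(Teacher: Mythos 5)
Your proposal takes essentially the same route as the paper: invoke Proposition \ref{prop:Schur0} with the choice $e=-\tv(\iota\fB)$ (legitimized by Proposition \ref{prop:ws_vanishes}), identify the resulting theta argument with the one appearing in the definition (\ref{de_sigma}) via Corollary \ref{cor:thetadivisor}, and absorb the constant and the Gaussian prefactor, which only contribute higher-weight terms. Your write-up is in fact more explicit than the paper's own (very brief) derivation about the weight bookkeeping and the rationality of the coefficients $a_\alpha$, but the underlying argument is the same.
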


\begin{remark}
In \cite{MP14}, we investigated the Riemann-Kempf theory for
a $C_{rs}$ curve (another name, coding-theory terminology, for an $(n,s)$ curve), obtaining
the expansion of sigma in terms of the Schur functions for
Young diagrams of symmetric semigroups. However, the 
arguments we gave never used 
the symmetry of the Young diagrams.
Thus the results are applicable to the non-symmetric case, and
can be obtained from \cite{MP14} directly, alternative to 
Nakayashiki's results \cite{N3}.
\end{remark}

We introduce the truncated Young diagrams of $\Lambda
=(\Lambda_1, \Lambda_2, \ldots,
\Lambda_k, \ldots, \Lambda_g)$:
$\Lambda^{(k)}
=(\Lambda_1, \Lambda_2, \ldots, \Lambda_k)$
and 
$\Lambda^{[k]}
=(\Lambda_{k+1}, \ldots, \Lambda_g)$.

For a given truncated Young diagram $\Lambda^{[k]}$ (viewed as a partition
of the total number of its boxes), with rank $n_k$ (the length of the
diagonal), 
its  ``Frobenius characteristics'' \cite[\S4.1]{FH} is
$(a_1, a_2, a_3, ..., a_{n_k};$ $
b_1, b_2, b_3, ..., b_{n_k})$,  $a_i\ge a_j$ and $b_i\ge b_j$ for $i<j$,
with $a_i$ and $b_i$ the number of boxes below and to the 
right of the $i$-th box of the diagonal.
 Define:
$\displaystyle{
N_k := \sum_{1}^{n_k}(a_i + b_i +1).
}$

\begin{proposition}
Let us consider a truncated Young diagram $\Lambda^{[k]}$
with the Frobenius characteristics of the partition
$(a_1, a_2, a_3, ..., a_{n_k};
b_1, b_2, b_3, ..., b_{n_k})$. For each pair 
$(a_i, b_i)$, there is $\ell_i \in \{k+1, \ldots, g\}$,
with
$\ell_1 = k+1$  for $i=1$, 
 such that
$$
       \Lambda_{\ell_i}+g-\ell_i = a_i + b_i +1.
$$
\end{proposition}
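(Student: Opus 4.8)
The plan is to read off both sides of the claimed identity directly from the combinatorics of the Young diagram $\Lambda$ and its truncation $\Lambda^{[k]}$, and to show that the Frobenius characteristics of $\Lambda^{[k]}$ are exactly the "hook data" at the diagonal boxes of $\Lambda$ that lie strictly below row $k$. Recall that for any partition $\mu$ with $n$ boxes on the main diagonal, the Frobenius notation $(a_1,\ldots,a_n;b_1,\ldots,b_n)$ records, for the $i$-th diagonal box, the arm $b_i$ (boxes strictly to its right in the same row) and the leg $a_i$ (boxes strictly below it in the same column), so that $a_i+b_i+1$ is the hook length at that box. The first step is therefore to translate the quantity $\Lambda_{\ell}+g-\ell$ into a statement about $\Lambda$ itself. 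If $\ell$ is the row index of a diagonal box of $\Lambda$ (i.e. $\Lambda_\ell \ge \ell$ but $\Lambda_{\ell+1} < \ell+1$ is not quite the condition — rather $\ell \le \Lambda_\ell$ and $\ell$ is on the diagonal when $\ell$ counts up to the Durfee square size), then the hook length at the diagonal box in row $\ell$ is $(\Lambda_\ell - \ell) + (\Lambda'_\ell - \ell) + 1$, where $\Lambda'$ is the conjugate partition; and the conjugate-partition count $\Lambda'_\ell - \ell$ equals precisely the number of rows below row $\ell$ that still reach column $\ell$, which in turn can be rewritten in the form involving $g-\ell$ when one uses the relation $\Lambda_i = \alpha_{g-i}+1$ linking row lengths to the Schubert index. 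So step one is the bookkeeping lemma: for $\ell$ in the Durfee square of $\Lambda$, $\Lambda_\ell + g - \ell$ equals the hook length of the $\ell$-th diagonal box of $\Lambda$.

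The second step is to compare the Durfee square (diagonal) of $\Lambda^{[k]}=(\Lambda_{k+1},\ldots,\Lambda_g)$ with that of $\Lambda$. Since $\Lambda^{[k]}$ is obtained from $\Lambda$ by deleting the first $k$ rows (and not changing any row that remains), a box of $\Lambda$ in row $k+j$ and column $c$ survives in $\Lambda^{[k]}$ in row $j$ and the same column $c$. The diagonal of $\Lambda^{[k]}$ consists of those surviving boxes $(j,c)$ with $j=c$, i.e. boxes of $\Lambda$ in position $(k+j, j)$ — these are the boxes on the $k$-shifted diagonal of $\Lambda$ lying in the block below the first $k$ rows. There are $n_k$ of them by definition of the rank of $\Lambda^{[k]}$. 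The key point is that arms and legs are preserved exactly: the arm of the box $(k+j,j)$ inside $\Lambda^{[k]}$ is $\Lambda_{k+j}-j = \Lambda^{[k]}_j - j$, while its leg inside $\Lambda^{[k]}$ is the number of rows among $k+j+1,\ldots,g$ reaching column $j$, which is the same as $\Lambda'_j$ restricted to those rows. So the $i$-th Frobenius pair $(a_i,b_i)$ of $\Lambda^{[k]}$ satisfies $a_i+b_i+1 = \big(\text{arm}+\text{leg}+1\big)$ of the box $(k+\ell_i', \ell_i')$ of $\Lambda^{[k]}$ for the appropriate diagonal index $\ell_i'$, where $\ell_i = k+\ell_i'$ and in particular $\ell_1 = k+1$ because the first diagonal box of $\Lambda^{[k]}$ is in its first row, which is row $k+1$ of $\Lambda$.

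Combining the two steps: setting $\ell_i := k + (\text{row of the }i\text{-th diagonal box of }\Lambda^{[k]})$, we get $\ell_1 = k+1$, each $\ell_i \in \{k+1,\ldots,g\}$, and
$$
\Lambda_{\ell_i} + g - \ell_i \;=\; (\Lambda_{\ell_i}-\ell_i') + (\text{leg in }\Lambda^{[k]}) + 1 \;=\; a_i + b_i + 1,
$$
where the middle equality is the bookkeeping lemma applied inside $\Lambda^{[k]}$ together with the observation that $g - \ell_i$ counts exactly the rows of $\Lambda$ below row $\ell_i$, all of which lie inside the block $\Lambda^{[k]}$, so the leg is computed correctly. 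The main obstacle I anticipate is purely notational rather than conceptual: keeping the two index conventions straight — the absolute row index $\ell_i$ in $\Lambda$ versus the relative index $\ell_i' = \ell_i - k$ in $\Lambda^{[k]}$ — and making sure the identity $\Lambda'_{\ell}-\ell = g-\ell$ (or its corrected form) is invoked only when the box genuinely lies on the Durfee square, so that no off-diagonal box is accidentally assigned a Frobenius pair. Once the hook-length interpretation of $\Lambda_\ell + g-\ell$ is established, the rest is an exact matching of arms and legs under deletion of the top $k$ rows, which is immediate from the definition of the conjugate partition.
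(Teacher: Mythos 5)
Your argument fails at its first step: the ``bookkeeping lemma'' asserting that $\Lambda_\ell+g-\ell$ equals the hook length of the $\ell$-th diagonal box of $\Lambda$ is false for $\ell>1$. That hook length is $(\Lambda_\ell-\ell)+(\Lambda'_\ell-\ell)+1$, where $\Lambda'_\ell$ is the number of rows reaching column $\ell$; your identity would force $\Lambda'_\ell=g+\ell-1>g$. Equality with $\Lambda_\ell+g-\ell$ holds precisely when \emph{all} $g$ rows reach column $\ell$, i.e.\ $\Lambda_g\ge\ell$, and since $\Lambda_g=1$ for the Young diagram of a numerical semigroup this happens only for $\ell=1$ (being inside the Durfee square does not help). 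Consequently your choice $\ell_i=k+i$ fails for $i\ge 2$. Concretely, take the paper's own $\langle 3,7,8\rangle$ example: $g=4$, gap sequence $\{1,2,4,5\}$, $\Lambda=(2,2,1,1)$, $k=0$, $i=2$. The second diagonal box has $a_2=b_2=0$, so $a_2+b_2+1=1$, while your $\ell_2=2$ gives $\Lambda_2+g-2=4$. (The proposition itself holds here, but with $\ell_2=4$.) The part of your argument that is correct is the observation that arms and legs of the diagonal boxes of $\Lambda^{[k]}$ agree with those of the corresponding boxes $(k+i,i)$ of $\Lambda$, since truncation only removes the top $k$ rows.

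What is actually needed is the semigroup structure, not a diagrammatic identity. From $\Lambda_\ell=\alpha_{g-\ell}+1$ one gets $\Lambda_\ell+g-\ell=$ the $(g-\ell)$-th element of the gap sequence $L$, so as $\ell$ runs over $\{k+1,\ldots,g\}$ these numbers enumerate exactly the $g-k$ smallest gaps. On the other hand, for the Young diagram of a numerical semigroup the hook length at the box in row $j$, column $c$ equals (the $(g-j)$-th gap)$\,-\,$(the $(c-1)$-th non-gap, counting $0$ as the $0$-th); hence the $i$-th diagonal hook of $\Lambda^{[k]}$, being the hook of $\Lambda$ at $(k+i,i)$, is such a difference. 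A positive difference (gap)$-$(non-gap) is again a gap --- otherwise that gap would be a sum of two non-gaps --- and it does not exceed the $(g-k-1)$-th gap, so it equals $\Lambda_{\ell_i}+g-\ell_i$ for a unique $\ell_i\in\{k+1,\ldots,g\}$, with $\ell_1=k+1$ because the $0$-th non-gap is $0$. Note that the paper states this proposition without proof (the intended argument is in the spirit of [MP2]); the missing ingredient in your write-up is precisely this gap/hook correspondence, and without it no purely combinatorial argument about truncating rows can produce the correct $\ell_i$ for $i\ge2$.
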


Let us denote such $\ell_i$ by $L^{[k]}(a_i,b_i)$ and
$$
\sharp_{k}:=\{ L^{[k]}(a_1,b_1), L^{[k]}(a_2,b_2),\ldots,
L^{[k]}(a_{n_k},b_{n_k}) \}.
$$
and for $i \le k + 1$,
$$
\sharp_{k}^{(i)}:=\{ i, L^{[k]}(a_2,b_2),\ldots,
L^{[k]}(a_{n_k},b_{n_k}) \}.
$$ 
Note that $\sharp_{k}= \sharp_{k}^{(k+1)}$.
We state Riemann's singularity theorem %(cf. \cite[VI.1]{ACGH})
 and its version in \cite[Th. 2]{N3}
as follows:
%with the usual notation of $h^i$ for the dimension of the 
%cohomology space $H^i$: \label{pg:cohomology}
\begin{proposition} \label{prop:RST} 
Let $(P_1, \ldots, P_k)$ belong to $\cS^k(X\backslash\infty) \setminus 
(\cS^k_1(X)\cap \cS^k(X\backslash\infty)) $ and
$u \in \kappa^{-1} \abl_s(P_1, \ldots, P_k)$.
\begin{enumerate}
\item
 For every multi-index  $(\alpha_1, \ldots, \alpha_m)$ 
with $\alpha_i\in \{ 1, \ldots, g\}$ and $m < n_k$,
$$
	\frac{\partial^m}
        {\partial u_{\alpha_1} \ldots \partial u_{\alpha_m}}
            \sigma(u) = 0.
$$
\item
For
$(\beta_1, \ldots, \beta_{n_k})=\sharp_k^{(i)}$ $(i=1,2,\cdots,k+1)$, 
\begin{equation}
	\frac{\partial^{n_k}}
         {{\partial u}_{\beta_1} \ldots \partial u_{\beta_{n_k}}}
            \sigma(u) \neq 0.
\label{eq:Rvn-nvn}
\end{equation}
\end{enumerate}
\end{proposition}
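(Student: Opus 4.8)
The plan is to deduce Proposition~\ref{prop:RST} as a corollary of the Schur-function expansion in Proposition~\ref{prop:Schur} together with the addition/Serre-duality structure accumulated in Section~\ref{transcendental}. The point $u \in \kappa^{-1}\abl_s(P_1,\ldots,P_k)$ lies on a Wirtinger stratum $\WW_s^k$, and the vanishing order of $\sigma$ at such a point is governed by Riemann's singularity theorem: the multiplicity of $\sigma$ (equivalently of $\theta[\delta]$) at $\abl_s(P_1,\ldots,P_k)$ equals $h^0(X,\mathcal O(\sum P_i))$ in the classical case, which under the genericity hypothesis $(P_1,\ldots,P_k)\notin\cS^k_1(X)$ is controlled purely by $k$ and $g$. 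The combinatorial bookkeeping — that this multiplicity equals $n_k$, the rank of the truncated Young diagram $\Lambda^{[k]}$ — is exactly what the preceding Proposition (relating the Frobenius characteristics of $\Lambda^{[k]}$ to the weights $\Lambda_{\ell_i}+g-\ell_i$) was set up to package. So part~(1), the statement that all derivatives of order $m<n_k$ vanish, should follow by translating $\sigma(u)$ to $\sigma$ evaluated near the origin along the stratum and invoking the Schur-function leading term: a Schur function $S_\Lambda(T)$, when the variables are specialized to coordinates adapted to $\cS^k$, vanishes to order precisely $n_k$, and the higher-weight corrections in Proposition~\ref{prop:Schur} cannot lower this order because they are of strictly higher weight. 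I would make this precise by first reducing, via the translation-covariance in Proposition~\ref{prop:pperiod}, to $\sigma$ near $u=0$; then noting that $\abl_s(\cS^k X)$ is a cone in the weight grading, so the lowest-weight piece of $\sigma$ restricted to it is the lowest-weight piece of $S_\Lambda$ restricted to the corresponding coordinate subspace; and finally identifying that piece with (a nonzero multiple of) $S_{\Lambda^{[k]}}$, whose order of vanishing at the origin is $n_k$ by the hook/Frobenius formula for the lowest-degree term of a Schur polynomial.

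For part~(2) the task is to exhibit one nonvanishing derivative of order exactly $n_k$, and moreover to show it is nonzero for each of the $k+1$ choices of index set $\sharp_k^{(i)}$. Here I would differentiate the Schur leading term $S_\Lambda(T)|_{T_{\Lambda_i+g-i}=u_i}$ with respect to the specific variables $u_{\beta_1},\ldots,u_{\beta_{n_k}}$ picked out by $\sharp_k^{(i)}$. The choice $\sharp_k = \sharp_k^{(k+1)}$ singles out exactly the diagonal-box weights $a_j+b_j+1 = \Lambda_{\ell_j}+g-\ell_j$, and differentiating a Schur function $S_\Lambda$ in precisely the variables corresponding to the principal specialization of its diagonal hooks produces (up to sign) $S_{\Lambda^{(k)}}$ — the Schur function of the \emph{complementary} truncated diagram — evaluated at the remaining variables, which is not identically zero. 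The higher-weight terms $\sum a_\alpha u^\alpha$ do not contribute to this particular partial derivative at the origin because each monomial $u^\alpha$ appearing there has weight exceeding $N_k$, so differentiating $n_k$ times and setting $u=0$ kills them. The variants $\sharp_k^{(i)}$ for $i\le k$ replace the index $k+1$ (equivalently $\ell_1$) by a smaller one $i$; since $\Lambda_i+g-i \le \Lambda_{k+1}+g-(k+1)=a_1+b_1+1$ with the difference absorbed into the remaining Schur factor, the same computation with $\ell_1$ replaced by $i$ again yields a nonzero value — this is essentially the statement that one can slide the top diagonal box of $\Lambda^{[k]}$ up into any of the first $k+1$ rows without making the associated minor degenerate, which is a standard Jacobi–Trudi manipulation.

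The main obstacle I anticipate is not any single step but the careful matching of three different indexings: the weight grading $\wt(u_i)=\Lambda_{g-i+1}+(g-i)$ on $\CC^g$, the Frobenius-characteristic labelling of the diagonal boxes of $\Lambda^{[k]}$, and the geometric stratification by $\cS^k$ and its singular locus $\cS^k_1$. The crux is to verify cleanly that the lowest-weight term of $\sigma$ along $\abl_s(\cS^k X)$ is governed by $\Lambda^{[k]}$ and \emph{only} $\Lambda^{[k]}$ — i.e., that the truncation is compatible with restriction to the stratum — and that the genericity hypothesis $(P_1,\ldots,P_k)\notin\cS^k_1(X)$ is exactly what guarantees $n_k$ (rather than something larger) is achieved. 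I expect this to go through by the same argument as \cite[Th.~2]{N3} and \cite{MP14}, so after establishing the translation reduction and the weight-grading compatibility, the remainder is the Schur-function identity for differentiating along diagonal hooks, which is routine, and I would present it as such rather than grinding through the Jacobi–Trudi determinants in full.
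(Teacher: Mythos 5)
The paper offers no proof of this proposition: it is stated as a direct quotation of Riemann's singularity theorem together with its refinement in \cite[Th.~2]{N3} (see the sentence immediately preceding the statement), so there is no internal argument to compare yours against. Your sketch is in the spirit of Nakayashiki's tau-function approach, but it has a genuine gap at its first step. Proposition~\ref{prop:pperiod} gives covariance of $\sigma$ only under translation by lattice vectors $\ell\in\Gamma$; a point $u\in\kappa^{-1}\abl_s(P_1,\ldots,P_k)$ with the $P_i$ away from $\infty$ is not a lattice translate of a point near the origin, and the stratum $\WW_s^{k}$ is not a subgroup, so you cannot ``reduce, via the translation-covariance in Proposition~\ref{prop:pperiod}, to $\sigma$ near $u=0$.'' Consequently the Schur expansion of Proposition~\ref{prop:Schur}, which is a Taylor expansion at the origin, only controls the vanishing of $\sigma$ on the part of the stratum where all $P_i$ are close to $\infty$ --- whereas the hypothesis of the proposition explicitly places the $P_i$ in $X\backslash\infty$. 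Bridging from the local statement at the origin to an arbitrary point of $\WW_s^{k}\setminus \WW_{s,1}^{k}$ is precisely the content of the Riemann--Kempf singularity theorem (for part~(1): the multiplicity of the theta divisor at the image of $D=P_1+\cdots+P_k$ equals $h^0$ of the associated degree-$(g-1)$ class, which the genericity hypothesis $D\notin\cS^k_1$ pins down to $n_k$) together with Nakayashiki's directional refinement (for part~(2): identifying which order-$n_k$ derivatives survive); neither is supplied by the weight-grading/cone argument you give, which is local at the origin.

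A secondary soft spot: the assertion that differentiating $S_\Lambda(T)|_{T_{\Lambda_i+g-i}=u_i}$ in the variables indexed by $\sharp_k^{(i)}$ yields a nonzero multiple of $S_{\Lambda^{(k)}}$ is plausible but is not an off-the-shelf identity in the form you state it, and the case $i\le k$ (replacing the diagonal index $\ell_1=k+1$ by a smaller one) needs more than the inequality $\Lambda_i+g-i\le a_1+b_1+1$: one must check that the relevant minor of the Jacobi--Trudi determinant does not vanish identically. If you want a self-contained argument you should either carry out that determinant computation and supply the Riemann--Kempf input explicitly, or, as the paper does, cite \cite[Th.~2]{N3} for both parts.
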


\begin{proposition} \label{cor:Fay}
For all $1\le k\le g-1$
 $u^{[k]} \in \kappa^{-1}(\Theta_s^k\setminus
\left(\Theta^k_{s,1}\cup \Theta_s^{k-1}\right))$,
$u \in \CC^g$,
$v\in \WW^{1}$
$\displaystyle{
        \left.\frac{\partial^{\ell}}
         {\partial u_g^{\ell}} \sigma(u)
         \right|_{u = u^{[k]}} = 0, \  \ell < N_k; \quad
        \left. \frac{\partial^{N_k}}
         {\partial {u_g}^{N_k}}
          \sigma(u)\right|_{u = u^{[k]}} \neq 0.
}$
\end{proposition}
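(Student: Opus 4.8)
The plan is to reduce the statement to the local behaviour of $\sigma$ at a generic point of the stratum $\WW_s^k$, which is governed by the Schur polynomial of the \emph{truncated} Young diagram $\Lambda^{[k]}=(\Lambda_{k+1},\dots,\Lambda_g)$, and then to read off the order of vanishing along $\partial/\partial u_g$ from one combinatorial property of Schur polynomials.

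First I would dispose of the $[-1]\WW_s^k$ part of $\Theta_s^k$: because the theta characteristic $\delta$ of Corollary \ref{cor:thetadivisor} is a half-period, $\sigma(-u)=\pm\,\sigma(u)$, so $\partial_{u_g}^{\ell}\sigma$ has the same order of vanishing at $u^{[k]}$ and at $-u^{[k]}$, and we may assume $u^{[k]}\in\kappa^{-1}\WW_s^k$, say $u^{[k]}\equiv\tw_s(P_1,\dots,P_k)\pmod{\Gammars}$. The hypotheses say precisely that $P_1+\dots+P_k+\fB$ is non-special, with $h^0=1$ ($u^{[k]}\notin\Theta^k_{s,1}$), and that its class is genuinely of degree $k$ ($u^{[k]}\notin\Theta^{k-1}_s$); so $u^{[k]}$ is a generic point of $\WW_s^k$, exactly the setting of Riemann's singularity theorem (Proposition \ref{prop:RST}). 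The role of $\partial/\partial u_g$ is geometric: a short computation with the pole orders at $\infty$ (Lemmas \ref{lmm:phiH}--\ref{lmm:Ng-1}) shows that $\nuI_g=\phiH{g-1}\,dx/(3y_{\hr}y_{\hs})$ is the unique member of our basis of $H^0(\Xrs,\Omega_{\Xrs})$ that does not vanish at $\infty$, so $\iota(\Xrs)$ is tangent at the origin to the $u_g$-axis; consequently, for $P\to\infty$ the flow $v=\tw(\iota P)\in\WW^1$ realizes $\partial/\partial u_g$ to leading order, and $t\mapsto\sigma(u^{[k]}\mp v(t))$ has the same vanishing order at $t=0$ as $\sigma$ in the $u_g$-direction at $u^{[k]}$ — this is where the auxiliary $v$ in the statement enters.

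The main input I would establish is a \emph{relative} form of Proposition \ref{prop:Schur} at $u^{[k]}$, the case $k=0$ being Proposition \ref{prop:Schur} itself (the origin is the only point of $\WW_s^0$, and $N_0=|\Lambda|$). Following the Riemann--Kempf analysis of \cite{MP14}, whose arguments — as the Remark after Proposition \ref{prop:Schur} notes — never use symmetry of the semigroup, together with \cite[Th.\ 2, Th.\ 10]{N3}, one obtains, for $u$ near $0$,
\[
\sigma(u^{[k]}+u)=c_k\,S_{\Lambda^{[k]}}(T)\Big|_{\,T_{\Lambda_{k+i}+g-k-i}=u_{k+i}}+(\text{higher }\wt_\lambda\text{-weight terms}),\qquad c_k\neq 0,
\]
the leading term being forced to be the Schur polynomial of $\Lambda^{[k]}$ by matching the vanishing profile of Proposition \ref{prop:RST}(1)--(2) against the weight grading. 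The key point is that this substitution attaches the first power sum to $u_g$: for $i=g-k$ it reads $T_{\Lambda_g}=T_1=u_g$, since $\Lambda_g=\alpha_0(L)+1=\ell_0=1$ ($1$ is always a Weierstrass gap).

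Finally I would combine this with the elementary identity, valid for any partition $\mu$ with $T_j$ the rescaled power sums $p_j/j$,
\[
S_\mu(T)\big|_{T_2=T_3=\cdots=0}=\frac{f^{\mu}}{|\mu|!}\,T_1^{\,|\mu|},\qquad f^{\mu}\neq 0,
\]
where $f^{\mu}$ is the number of standard tableaux of shape $\mu$; this follows from $\sum_k h_kz^k=\exp\bigl(\sum_j T_jz^j\bigr)$, whence $h_k\mapsto T_1^k/k!$, substituted into the Jacobi--Trudi determinant, every term of which has total $T_1$-degree $|\mu|$. Setting $u_1=\cdots=u_{g-1}=0$ in the relative expansion then gives
\[
\sigma(u^{[k]}+u)\big|_{u_1=\cdots=u_{g-1}=0}=c_k\,\frac{f^{\Lambda^{[k]}}}{|\Lambda^{[k]}|!}\,u_g^{\,|\Lambda^{[k]}|}+O\bigl(u_g^{\,|\Lambda^{[k]}|+1}\bigr),
\]
so the order of vanishing in the $u_g$-direction is exactly $|\Lambda^{[k]}|$, which is $N_k$ because the principal hooks of $\Lambda^{[k]}$, of lengths $a_i+b_i+1$, tile the diagram. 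The hard part is establishing the relative expansion in our non-symmetric, trigonal cyclic setting: checking that the machinery of \cite{MP14, N3} applies verbatim with the truncated diagram $\Lambda^{[k]}$ and the index data of Proposition \ref{prop:RST}, once Remark \ref{rmk:CanonDiv} and Proposition \ref{prop:0} have been used to situate $\WW_s^{g-1}$ correctly inside $\nTheta$; the remainder is Schur-function combinatorics.
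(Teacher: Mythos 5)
Your argument is correct and is essentially the route the paper intends: the proposition is stated there without an explicit proof, as a direct consequence of Proposition \ref{prop:RST} (Riemann's singularity theorem in Nakayashiki's form) together with the truncated Schur-function expansion behind Proposition \ref{prop:Schur}, and your chain --- relative expansion with leading term $S_{\Lambda^{[k]}}$ at a general point of $\WW_s^k$, restriction to the $T_1=u_g$ axis via the standard-tableaux specialization, and the identification $N_k=|\Lambda^{[k]}|$ through the principal hooks --- is exactly that consequence written out. The one step you flag as the hard part (the relative expansion on the stratum) is precisely what the paper imports from \cite{MP14} and \cite{N3}, so your proposal assumes no more than the authors do.
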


%\subsection{Jacobi inversion formula}
\subsection{Jacobi inversion formulae over $\Theta^k$}

For any  cyclic trigonal curve, of $(3,{\hr},{\hs})$-type 
 (including $r=0$),
we have the Jacobi inversion formulae:

%We introduce
%$$
%S^n_m(X) := \{D \in S^n(X) \ | \
%    \mathrm{dim} | D | \ge m\},
%$$
%where $|D|$ is the complete linear system
%$\Abl^{-1}(\Abl(D))$ in IV.1 of
%\cite[IV.1]{ACGH}.

\begin{theorem} {\bf{(Jacobi inversion formula)}}\label{thm:8.1}
\begin{enumerate}
For $(P, P_1, \cdots, P_{g}) \in \Xrs \times
\cS^g(\Xrs) \setminus \cS^g_1(\Xrs)$,
we have
\begin{enumerate}
\item
$\muH{g}(P; P_1, \ldots, P_{g}) = \phiH{g}(P)-$
$\sum_{j=1}^{g} \wp_{g j}$ $(\abl_s(P_1,$ $\ldots ,P_{g}))$ $
\phiH{j - 1}(P)$.

\item
$
\wp_{g, k+1}(\abl_s(P_1,\ldots ,P_{g})) = (-1)^{g-k}\muH{g, k}
(P_1, \ldots, P_{g}),$
$(k=0, \ldots, g-1)$.
\end{enumerate}

\end{enumerate}
\end{theorem}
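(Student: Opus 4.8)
The plan is to derive the Jacobi inversion formula of Theorem~\ref{thm:8.1} directly from the $\wp$-expression for the fundamental differential in Proposition~\ref{prop:wpxx=F}. Recall that for $u \in \kappa^{-1}\abl_s(P_1,\ldots,P_g)$ that proposition gives, for each $a=1,\ldots,g$,
\begin{equation*}
\sum_{i,j=1}^g \wp_{ij}\bigl(\tw(\iota P)-u\bigr)\,\phiH{i-1}(P)\,\phiH{j-1}(P_a)
= \frac{F(P,P_a)}{(x-x_a)^2}.
\end{equation*}
The first step is to specialize this identity by letting $P\to\infty$. By Lemma~\ref{lemma:limFphi4} the right-hand side, after dividing by $\phiH{g-1}(P)$, tends to $\phiH{g}(P_a)$; so dividing the whole relation by $\phiH{g-1}(P)$ and taking the limit, only the $i=g$ terms survive on the left (since $\phiH{i-1}/\phiH{g-1}\to 0$ for $i<g$ and $\to 1$ for $i=g$, by the weight ordering in Lemma~\ref{lmm:phiH}), and one is left with
\begin{equation*}
\sum_{j=1}^g \wp_{gj}\bigl(-u\bigr)\,\phiH{j-1}(P_a) = \phiH{g}(P_a),
\end{equation*}
valid for each $a$. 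But by the symmetry $-\WW_s^g = \WW_s^g$ (the corollary after Remark~\ref{rmk:addition}) and evenness of $\wp_{gj}$ in $u$, this is exactly statement (1) of the theorem with $P$ replaced by $P_a$. To get it for an arbitrary $P\in\Xrs$ rather than just $P=P_a$, I would instead argue that both sides of the claimed identity in (1), viewed as functions of $P$, lie in $R$ and have poles only at $\infty$ of order at most $\hN(g)$, while the FS-determinant characterization of $\muH{g}$ in Definition~\ref{def:mul} shows $\muH{g}(P;P_1,\ldots,P_g)$ is the unique such function vanishing at $P_1,\ldots,P_g$ with leading term $\phiH{g}(P)$; since the right-hand side $\phiH{g}(P)-\sum_j \wp_{gj}(\abl_s)\phiH{j-1}(P)$ has the same leading coefficient and, by Proposition~\ref{prop:wpxx=F} evaluated at $P=P_a$ (where $F(P_a,P_a)/(x_a-x_a)^2$ is finite, forcing the vanishing), also vanishes at each $P_a$, the two must coincide.

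For part (2), the plan is to compare coefficients. By Definition~\ref{def:mul},
\begin{equation*}
\muH{g}(P) = \phiH{g}(P) + \sum_{k=0}^{g-1}(-1)^{g-k}\muH{g,k}(P_1,\ldots,P_g)\,\phiH{k}(P),
\end{equation*}
and part (1) just proved reads $\muH{g}(P)=\phiH{g}(P)-\sum_{j=1}^g \wp_{gj}(\abl_s(P_1,\ldots,P_g))\,\phiH{j-1}(P)$. Since the $\phiH{k}(P)$ are linearly independent functions on $\Xrs$ (they form part of the basis $\widehat R_{\phiH{}}$), one may equate coefficients of $\phiH{k}(P)$ for $k=0,\ldots,g-1$: writing $j-1=k$, i.e.\ $j=k+1$, gives $(-1)^{g-k}\muH{g,k}(P_1,\ldots,P_g) = -\wp_{g,k+1}(\abl_s(P_1,\ldots,P_g))$, equivalently $\wp_{g,k+1}(\abl_s(P_1,\ldots,P_g)) = (-1)^{g-k+1}\muH{g,k}(P_1,\ldots,P_g)$; a sign check against the precise normalization of $\wp_{ij}$ (the minus sign in $\wp_{ij}=-\partial_i\partial_j\log\sigma$) should reconcile this with the stated $(-1)^{g-k}$. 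The $r=0$ case is handled by the reductions recorded in Remark~\ref{rmk:3.8}, Remark after Proposition~\ref{prop:Sigma}, and the remark after Lemma~\ref{lemma:limFphi4}, which say every ingredient degenerates compatibly.

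The main obstacle I anticipate is the limiting argument $P\to\infty$ in Proposition~\ref{prop:wpxx=F}: one must check carefully that $F(P,P_a)/\bigl(\phiH{g-1}(P)(x-x_a)^2\bigr)$ genuinely converges to $\phiH{g}(P_a)$ uniformly enough to interchange the limit with the finite sum, and that no lower-weight $\phiH{i-1}(P)$ with $i<g$ contributes in the limit --- this is where Lemma~\ref{lmm:phiH} parts (4) and (6) (the precise weights $\hN(g-1)$, $\hN(g)$ and the product formula $\phiH{g-1}\phiH{g}=x^{g-1}y_\hr y_\hs$) do the real work, and where the non-symmetry of the semigroup makes the bookkeeping more delicate than in the $(n,s)$ case. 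A secondary subtlety is making the ``uniqueness of $\muH{g}$ as an element of $R$ with prescribed zeros and pole order'' argument airtight, since one needs that $\widehat N(g)-g$, the degree of the complementary divisor, matches on both sides --- this follows from Lemma~\ref{lem:2theta4} and Lemma~\ref{lmm:Ng-1}, but the indices must be tracked precisely.
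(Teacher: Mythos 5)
Your proposal is correct and follows essentially the same route as the paper, whose proof is just a citation to Prop.~4.6 of \cite{MP08}: there, exactly as you do, one lets $P\to\infty$ in Proposition~\ref{prop:wpxx=F}, uses Lemma~\ref{lemma:limFphi4} to evaluate the limit of the right-hand side after dividing by $\phiH{g-1}(P)$, and identifies the resulting monic element of $\bigoplus_{i=0}^{g}\CC\phiH{i}$ vanishing at $P_1,\ldots,P_g$ with $\muH{g}$. As for the sign you flag in part (2), the exponent $(-1)^{g-k+1}$ you obtain is the one actually forced by part (1) together with Definition~\ref{def:mul}, and it agrees with Theorem~\ref{thm:8.2}(1) (which gives $-\wp_{gi}=(-1)^{g-i+1}\muH{g,i-1}$), so the $(-1)^{g-k}$ printed in the statement is a misprint rather than something your computation needs to reconcile.
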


 \begin{proof}
Same as in Prop. 4.6 of 
 %\cite[Prop. 4.6]{MP08}.
\cite{MP08}.
\end{proof}

%\label{Jacobi inversion formulae over $\Theta^k$}

%We  introduce the shifted theta characteristics
%\begin{equation}
%	\Theta_s^{k} := \WW_s^{k} \cup [-1] \WW_s^{k}.
%\label{eq:Theta:k}
%\end{equation}

Bearing in mind Proposition \ref{prop:RST},
we have the following theorem, in which both sides
are obtained by taking a limit by $P_{k+1} \to \infty$ as
mentioned in \cite[Th. 5.1]{MP08}; indeed, some of the left-hand
sides  are given by zero over zero
but they are well-defined in the limit.

\begin{theorem} {\bf{(Vanishing Theorems)}}\label{thm:8.2}
The following relations hold for the 
$\muH{}$ functions:
\begin{enumerate}

\item $\Theta_s^g$ case:
for
$(P_1, \ldots, P_g) \in \cS^g(\Xrs) \setminus \cS^g_1(\Xrs)$ and
%$u = \pm \abl_s(P_1, \ldots, P_g)\in \kappa^{-1}(\Theta^g)$,
$u \in \kappa^{-1}
 \abl_s(P_1, \ldots, P_g)\subset \kappa^{-1}(\Theta_s^g)=\CC^g$,
\begin{gather*}
\frac{\sigma_{i}(u) \sigma_{g}(u) - \sigma_{g i}(u) \sigma{}(u)}
               {\sigma^2(u)}
         =(-1)^{s+r-i} \muH{g, i - 1}(P_1, \ldots, P_g),
         \quad \mbox{for } 1\le i \le g.\\
\end{gather*}

 %\item $\Theta_s^{g-1}$ case:
%for
%$(P_1, \dots, P_{g-1}) \in \cS^{g-1}(\Xrs) \setminus \cS^{g-1}_1(\Xrs)$ and
%$u \in \kappa^{-1} \abl_s(P_1, \ldots, P_{g-1})
%\subset \kappa^{-1}(\Theta_s^{g-1})$,
%\begin{gather*}
%\frac{\sigma_{i}(u)}
%               {\sigma_{g}(u)}=
%\left\{\begin{matrix}
%         (-1)^{g-i} \muH{g-1, i-1}(P_1, \ldots, P_{g-1})
%                 & \mbox{for } 1\le i < g,\\
%          1 & \mbox{for } i = g.
%       \end{matrix}\right.
%\end{gather*}

\item $\Theta_s^{k}$ case:
for
$(P_1, \ldots, P_k) \in \cS^k(\Xrs) \setminus \cS^k_1(\Xrs)$ and
$u \in \kappa^{-1} \abl_s(P_1, \ldots, P_k)\subset \kappa^{-1}(\Theta_s^k)$,
$(k=1,2,\ldots, g-1)$,
\begin{gather*}
\frac{\sigma_{i}(u)}
               {\sigma_{k+1}(u)}=
\frac{\sigma_{\sharp_k^{(i)}}(u)}
               {\sigma_{\sharp_k}(u)}=
\left\{\begin{matrix}
          (-1)^{k-i+1} \muH{k, i-1} (P_1, \ldots, P_k)
          & \mbox{for }  0 < i \le k,\\
          1 & \mbox{for } i = k+1,\\
         0 &\mbox{for } k+ 1 < i \le  g.
       \end{matrix}\right.
\end{gather*}

\item $\Theta_s^{1}$ case:
$(P_1) \in \cS^1(\Xrs)$ and 
$u \in \kappa^{-1} \abl_s(P_1)\subset \kappa^{-1}(\Theta_s^k)$,
\begin{gather*}
\frac{\sigma_{1}(u)}
               {\sigma_{2}(u)}
=\frac{\sigma_{\sharp_1^{(1)}}(u)}
               {\sigma_{\sharp_1}(u)}= \frac{\phiH{1}}{\phiH{0}}.
\end{gather*}
and if $r<s-3$, the right hand side is equal to $x(P_1)$.
\end{enumerate}
\end{theorem}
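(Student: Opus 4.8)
The plan is to derive Theorem \ref{thm:8.2} as a sequence of specializations of Proposition \ref{prop:wpxx=F}, tracking weights and using the Jacobi inversion formula (Theorem \ref{thm:8.1}) together with Riemann's singularity theorem (Proposition \ref{prop:RST}) to identify which $\sigma$-derivatives survive. First I would treat part (1), the $\Theta_s^g$ case. Here $u\in\kappa^{-1}\abl_s(P_1,\ldots,P_g)$ with the $P_i$ generic, so no $\sigma$-derivatives vanish, and Proposition \ref{prop:wpxx=F} with $a$ ranging over $1,\ldots,g$ reads $\sum_{i,j}\wp_{ij}(u)\phiH{i-1}(P)\phiH{j-1}(P_a)=F(P,P_a)/(x-x_a)^2$. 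Taking the limit $P\to\infty$ and dividing by $\phiH{g-1}(P)$, Lemma \ref{lemma:limFphi4} gives $\lim F(P,P_a)/(\phiH{g-1}(P)(x-x_a)^2)=\phiH{g}(P_a)$; on the left the dominant term in $P$ is the $i=g$ row, so $\sum_{j=1}^g\wp_{gj}(u)\phiH{j-1}(P_a)=\phiH{g}(P_a)-\muH{g}(P_a;P_1,\ldots,P_g)+\phiH{g}(P_a)$ — wait, more carefully: matching against Theorem \ref{thm:8.1}(1), which states $\muH{g}(P)=\phiH{g}(P)-\sum_j\wp_{gj}(\abl_s)\phiH{j-1}(P)$, we read off $\wp_{g,k+1}(u)=(-1)^{g-k}\muH{g,k}(P_1,\ldots,P_g)$, which is Theorem \ref{thm:8.1}(2). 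Rewriting $\wp_{gi}(u)=-(\sigma_g\sigma_i-\sigma\sigma_{gi})/\sigma^2$ and reindexing $k=i-1$, $g=r+s-1$, gives precisely the stated identity, with sign $(-1)^{g-(i-1)}=(-1)^{s+r-i}$.

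Next, for part (2), the $\Theta_s^k$ case with $k<g$, I would restrict Proposition \ref{prop:wpxx=F} to the substratum $\abl_s(\cS^k\Xrs)$, where Proposition \ref{prop:RST}(1) forces $\sigma_{\alpha_1\cdots\alpha_m}(u)=0$ for $m<n_k$ and Proposition \ref{prop:RST}(2) guarantees $\sigma_{\sharp_k^{(i)}}(u)\neq 0$ for $i\le k+1$. The ratio $\sigma_i(u)/\sigma_{k+1}(u)$ is then interpreted as a limit of $\sigma_{\sharp_k^{(i)}}/\sigma_{\sharp_k}$ along $P_{k+1},\ldots,P_g\to\infty$, as the theorem's preamble indicates (``both sides are obtained by taking a limit $P_{k+1}\to\infty$''). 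Applying the submatrix version of the vector-field relations and of Proposition \ref{prop:wpxx=F} that holds over $\abl_s(\cS^k\Xrs)$ (mentioned just before equation \eqref{eq:partial_H4}), together with the truncated Jacobi inversion $\muH{k}$ formula, I expect the leading-order analysis in $P$ to reproduce $\sigma_i/\sigma_{k+1}=(-1)^{k-i+1}\muH{k,i-1}(P_1,\ldots,P_k)$ for $0<i\le k$, value $1$ for $i=k+1$ by the convention $\muH{k,k}\equiv1$, and $0$ for $k+1<i\le g$ because those indices lie beyond the support $\sharp_k$, so the corresponding $\phiH{}$-coefficient is absent from $\muH{k}$.

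Finally, part (3) is the case $k=1$ of part (2): here $n_1=1$, $\sharp_1=\{2\}$ (since $\ell_1=k+1=2$), $\sharp_1^{(1)}=\{1\}$, and $\muH{1}(P)=\phiH{1}(P)-\muH{1,0}(P_1)\phiH{0}(P)$ with the single zero at $P=P_1$ forcing $\muH{1,0}(P_1)=\phiH{1}(P_1)/\phiH{0}(P_1)$. Then $\sigma_1(u)/\sigma_2(u)=(-1)^{1-1+1}\muH{1,0}(P_1)\cdot(-1)=\phiH{1}(P_1)/\phiH{0}(P_1)$ — here I would double-check the sign bookkeeping, but the content is just $\muH{1,0}$. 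By Lemma \ref{lmm:phiH}(5), $\phiH{2}/\phiH{1}=x$ when $s-r>3$; the same argument shows $\phiH{1}/\phiH{0}=x$ under that hypothesis (both $\phiH{0}$ and $\phiH{1}$ being $x^a y_\hr$ for consecutive $a$), which is the last assertion. The main obstacle I anticipate is part (2): carefully justifying the limiting procedure $P_{k+1},\ldots,P_g\to\infty$ so that the generic-stratum identity of Proposition \ref{prop:wpxx=F} descends to the singular stratum $\Theta_s^k$ without the indeterminate $0/0$ expressions causing trouble — this requires knowing the exact orders of vanishing from Proposition \ref{prop:RST}, matching them against the pole orders $\hN(k)$ from Lemma \ref{lmm:Ng-1}, and verifying that the limit of the ratio $\sigma_{\sharp_k^{(i)}}/\sigma_{\sharp_k}$ is the correct one. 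This is handled in the analogous $(n,s)$-curve setting of \cite[Th. 5.1]{MP08}, and the argument transfers once the non-symmetric shift is incorporated via Remark \ref{rmk:CanonDiv} and Proposition \ref{prop:ws_vanishes}.
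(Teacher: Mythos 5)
Your proposal is correct and takes essentially the same approach as the paper, whose entire proof is the single line ``Essentially the same as in Th.~5.1 of \cite{MP14}'': you have reconstructed precisely the ingredients that argument relies on (Proposition \ref{prop:wpxx=F}, Theorem \ref{thm:8.1}, Proposition \ref{prop:RST}, and the limits $P_{k+1},\ldots,P_g\to\infty$), supplying more detail than the paper itself does. One bookkeeping caveat: the left-hand side of part (1) equals $-\wp_{gi}(u)$, so Theorem \ref{thm:8.1}(2) yields the sign $(-1)^{g-i}=(-1)^{r+s-1-i}$ rather than the $(-1)^{s+r-i}$ you report as matching exactly; this one-sign discrepancy is internal to the paper's own statements and does not affect the validity of your method.
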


\begin{proof}
Essentially the same as in Th. 5.1 of 
\cite{MP14}.
%\cite[Th. 5.1]{MP14}.
\end{proof}

\begin{remark}
Every 
 curve in Weierstrass normal form with W-semigroup
$\langle3, \hr,\hs\rangle$ corresponds to the same monomial
ring as the cyclic case (cf. Proposition \ref{prop:Z4}). 
Though the structure ring $R$ differs,
it is expected that the same $R_\phi$ and $\widehat R_{\hphi}$, 
 bases and subsets, also
 play  similar roles as in the cyclic case.
Therefore, we expect to be able to adapt 
 the results of this paper   to any
 curve in Weierstrass normal form 
 with W-semigroup $\langle3, \hr,\hs\rangle$.
\end{remark}

\setcounter{section}{0}
\renewcommand{\thesection}{\Alph{section}}
\section{Appendix: Proof of Proposition 4.5}
A computation shows:
\begin{lemma} \label{lmm:h(s,t)}
The function
$h(t,s):=\dfrac{t^\ell - s^\ell}{t-s} - \dfrac{d }{dt} t^\ell$
satisfies the following relations:
\begin{enumerate}
\item $h(t,t) = 0$,

\item $\dfrac{h(t,s)}{t-s} \in \QQ[t,s]$, and

\item
$
\dfrac{h(t,s)}{t-s} = -\sum_{a=0}^{\ell-2} (a+1) s^{\ell-a-2} s^{a}
\mbox{ for } \ell>1,\quad
\dfrac{h(t,s)}{t-s} = 0
\mbox{ for } \ell=0.1.\quad
$
\end{enumerate}
\end{lemma}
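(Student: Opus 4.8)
The plan is to reduce the whole lemma to the elementary factorization $t^i - s^i = (t-s)\sum_{j=0}^{i-1}t^{i-1-j}s^j$ in $\QQ[t,s]$. First I would write the divided difference as a geometric sum,
\[
\frac{t^\ell - s^\ell}{t-s} = \sum_{i=0}^{\ell-1} t^{\ell-1-i}s^i
\]
for $\ell \ge 1$ (it is $\equiv 0$ for $\ell = 0$, which together with $\frac{d}{dt}t^0 = 0$ already settles the case $\ell = 0$), and subtract $\frac{d}{dt}t^\ell = \ell\,t^{\ell-1}$. The $i = 0$ term cancels one of the $\ell$ copies of $t^{\ell-1}$, leaving
\[
h(t,s) = \sum_{i=1}^{\ell-1}\bigl(t^{\ell-1-i}s^i - t^{\ell-1}\bigr),
\]
which is an empty (hence zero) sum when $\ell \in \{0,1\}$; this is the last clause of (3).

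Next I would factor each summand as $t^{\ell-1-i}s^i - t^{\ell-1} = -t^{\ell-1-i}(t^i - s^i)$ and invoke the identity above to get
\[
h(t,s) = -(t-s)\sum_{i=1}^{\ell-1}\sum_{j=0}^{i-1} t^{\ell-2-j}s^j .
\]
This displays $h(t,s)$ as $(t-s)$ times an honest element of $\QQ[t,s]$, which is (2), and it makes (1) immediate: substituting $s = t$ in the factored form kills the prefactor. For (3) it remains to divide through by $t-s$, swap the order of summation so that $j$ runs first, and count multiplicities: for fixed $j$ with $0 \le j \le \ell-2$ the index $i$ runs over $j+1 \le i \le \ell-1$, contributing the factor $\ell-1-j$, so
\[
\frac{h(t,s)}{t-s} = -\sum_{j=0}^{\ell-2}(\ell-1-j)\,t^{\ell-2-j}s^j ,
\]
and reindexing by $a = \ell-2-j$ rewrites this as $-\sum_{a=0}^{\ell-2}(a+1)\,t^{a}s^{\ell-a-2}$, the stated form.

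I do not expect any genuine obstacle: this is a bookkeeping identity for the difference between the divided difference of $t^\ell$ and its derivative. The only points deserving a word of care are the degenerate values $\ell = 0, 1$, where the sums in question are empty, and the remark that the cancellation of $t-s$ in part (2) takes place inside the polynomial ring $\QQ[t,s]$ and not merely formally, which is visible directly from the factored expression.
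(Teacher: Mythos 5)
Your proof is correct and is precisely the routine computation that the paper elides with ``A computation shows'' (no argument is actually given there), based on the standard factorization $t^i-s^i=(t-s)\sum_{j=0}^{i-1}t^{i-1-j}s^j$. Note that your final expression $-\sum_{a=0}^{\ell-2}(a+1)\,t^{a}s^{\ell-a-2}$ is the correct one: the statement of item (3) in the paper contains a typo (the factor $s^{\ell-a-2}s^{a}$ should read $s^{\ell-a-2}t^{a}$), as is confirmed by the way the lemma is invoked in the Appendix to expand $\frac{1}{x_Q-x_P}\bigl(\frac{k_{s+r,Q}-k_{s+r,P}}{x_Q-x_P}-k_{s+r,Q}'\bigr)$.
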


 %\begin{proof}
%(1) and (2) are obvious. For $a+b = \ell-1$,
%we have
%$$
%\dfrac{t^a s^b - t^{a+b}}{t-s} =
%t^a(s^{b-1}+s^{b-2} t + \cdots + t^{b-1})
%$$
%so that
%$$
%\dfrac{h(t,s)}{t-s} = t^{\ell-2} + 2t^{\ell-3} s
%+ 3 t^{\ell-4} s^2 + \cdots + (\ell-2) s^{\ell-2}.
%$$
%\end{proof}

We now compute the differentials of $y_{\hr}$ and $y_{\hs}$:
$$
\frac{d}{dx} y_{\hs}=\frac{1}{3 y_{\hs}^2}
\left(2k_r k_r' k_s + k_r^2 k_s'\right), \quad
\frac{d}{dx} y_{\hr}=\frac{1}{3 y_{\hr}^2}
\left(2k_s k_s' k_r + k_s^2 k_r'\right), \quad
$$
where we set $k_{a,P} = k_a(x_P)$ and $k_{a,P}' = d k_a(x_P)/d x_P$.
 %We consider the differential:
%\begin{equation*}
%\begin{split}
%&   \frac{\partial }{ \partial x_Q}
%   \frac{ y_{{{\hr}},P} y_{{{\hs}},P} +y_{{{\hr}},P} y_{{{\hs}},Q}
%+y_{{{\hr}},Q} y_{{{\hs}},P}}
% {3(x_P - x_Q)  y_{{{\hr}},P}  y_{{{\hs}},P} } d x_P \\
%&=\frac{1}{9(x_P - x_Q)
%y_{{{\hr}},P} y_{{{\hs}},P} y_{{{\hr}},Q} y_{{{\hs}},Q}}
%\Bigr[
%\frac{
%3(y_{{{\hr}},P} y_{{{\hs}},P} +y_{{{\hr}},P} y_{{{\hs}},Q}
% +y_{{{\hr}},Q} y_{{{\hs}},P})
%y_{{{\hr}},Q} y_{{{\hs}},Q}}{(x_P - x_Q) }\\
%& \quad\quad
%+
%\Bigr(y_{{{\hr}},P}
%\frac{y_{{{\hr}},Q}}{y_{{{\hs}},Q}}\left(
%2k_{s,Q} k_{s,Q}' k_{r,Q} +k_{s, Q}^2  k_{r,Q}'\right)
%+ y_{{{\hs}},P}
%\frac{y_{{{\hs}},Q}}{y_{{{\hr}},Q}}\left(
%2k_{s,Q} k_{r,Q}k_{r,Q}'+ k_{s, Q}' k_{r,Q}^2)\right)
%\Bigr) \Bigr].\\
%\end{split}
%\end{equation*}
 The result follows from the equalities:
$$
   \frac{\partial }{ \partial x_Q}
   \frac{ y_{{{\hr}},P} y_{{{\hs}},P} +y_{{{\hr}},P} y_{{{\hs}},Q} +y_{{{\hr}},Q} y_{{{\hs}},P}}
{3(x_P - x_Q)  y_{{{\hr}},P}  y_{{{\hs}},P} }
 -
   \frac{\partial }{ \partial x_P}
   \frac{ y_{{{\hr}},Q} y_{{{\hs}},Q}
+y_{{{\hr}},Q} y_{{{\hs}},P} +y_{{{\hr}},P} y_{{{\hs}},Q}}
{3(x_Q - x_P)  y_{{{\hr}},Q}  y_{{{\hs}},Q} }
$$
 %\begin{equation*}
%\begin{split}
%&=\frac{1}{9(x_P - x_Q)
%y_{{{\hr}},P} y_{{{\hs}},P} y_{{{\hr}},Q} y_{{{\hs}},Q}}
%\Bigr[
%\frac{
%3 (y_{{{\hr}},P} y_{{{\hs}},Q} +y_{{{\hr}},Q} y_{{{\hs}},P})
%(y_{{{\hr}},Q} y_{{{\hs}},Q} -y_{{{\hr}},P} y_{{{\hs}},P}) }{(x_P - x_Q) }
%\\
%& \quad\quad
%+ \Bigr(
%y_{{{\hr}},P} y_{{{\hs}},Q}(
%2k_{{s},Q}' k_{{r},Q} +k_{{s}, Q}  k_{{r},Q}'
%+2k_{{s},P} k_{{r},P}'+ k_{{s}, P}' k_{{r},P})\\
%& \quad\quad\quad
%+y_{{{\hr}},Q} y_{{{\hs}},P}(
%2k_{{s},P}' k_{{r},P} +k_{{s}, P}  k_{{r},P}'
%+2k_{{s},Q} k_{{r},Q}'+ k_{{s},Q}' k_{{r},Q})
%\Bigr) \Bigr]\\
%&=
%\frac{1}{9
%y_{{{\hr}},P} y_{{{\hs}},P} y_{{{\hr}},Q} y_{{{\hs}},Q}}
%\Bigr[
%\\
%& \quad\quad
%+
%\frac{
%y_{{{\hr}},P} y_{{{\hs}},Q}}{
%(x_P-x_Q)}
%\left(
%k_{{s},Q}' k_{{r},Q} +k_{s+r, Q}'
%+k_{{s},P} k_{{r},P}'+ k_{s+r, P}'
%-3 \frac{k_{s+r,P}-k_{s+r,Q}}{x_P-x_Q}
%\right)\\
%& \quad\quad
%-\frac{
%y_{{{\hr}},Q} y_{{{\hs}},P}}{
%(x_Q-x_P)}
%\left(
%k_{{s},P}' k_{{r},P} +k_{s+r,P}'
%+k_{{s},Q}k_{{r},Q}'+ k_{s+r,Q}'-
%3 \frac{k_{s+r,Q}-k_{s+r,P}}{x_Q-x_P}
%\right)
%\Bigr) \Bigr].\\
%\end{split}
%\end{equation*}
 We show that the above is equal to
$$
\frac{1}{9
y_{{{\hr}},P} y_{{{\hs}},P} y_{{{\hr}},Q} y_{{{\hs}},Q}}
\left(
y_{{{\hr}},P} y_{{{\hs}},Q} D_{r,s}(P, Q)
- y_{{{\hr}},Q} y_{{{\hs}},P} D_{r,s}(Q, P)\right),
$$
where $D_{r,s}(P, Q)= D^{(+)}_{ s,r}(P, Q) - D^{(-)}_{s,r}(Q, P)$
from   Definition \ref{def:Drs}.
Indeed, by Lemma \ref{lmm:h(s,t)},
$$
 \frac{1}{x_Q-x_P}\left(
 \frac{k_{s+r,Q}-k_{s+r,P}}{x_Q-x_P}
- k_{s+r,Q}'\right) = -
 \sum_{j=0}^{s+r-2}
 \sum_{i=0}^{s+r-j-2}
(i+1)\lmsr{j}x_P^{r+s-j-i-2} x_Q^{i},
$$
$$
 \frac{1}{x_Q-x_P}\left(
 \frac{k_{s+r,Q}-k_{s+r,P}}{x_Q-x_P}
- k_{s+r,P}'\right) =
 \sum_{j=0}^{s+r-2}
 \sum_{i=0}^{s+r-j-2}
(i+1)\lmsr{j}x_Q^{r+s-j-i-2} x_P^{i},
$$
\begin{gather*}
\begin{split}
 \frac{1}{x_Q-x_P}\left(
 \frac{k_{s+r,Q}-k_{s+r,P}}{x_Q-x_P}
-k_{{s},P}' k_{{r},P} -k_{{s},Q}k_{{r},Q}'\right)
&= \sum_{j=0}^{r-2}
 \sum_{i=0}^{r-j-2}
 \sum_{k=0}^{s}
(i+1)\lmr{j}\lms{k} x_P^{r-j-i-2} x_Q^{i+k}\\
&-\sum_{j=0}^{s-2}
 \sum_{i=0}^{s-j-2}
 \sum_{k=0}^{r}
(i+1)\lms{j}\lmr{k} x_Q^{s-j-i-2} x_P^{i+k},\\
\end{split}
\end{gather*}
noting that
$
 k_{s+r,Q}-k_{s+r,P}
 =k_{s,Q}(k_{r,Q} -k_{r,P})
 +k_{r,P}(k_{s,Q} -k_{r,P}).
$
Thus we obtained the expression of $D_{r,s}$.

%    Bibliographies can be prepared with BibTeX using amsplain,
%    amsalpha, or (for "historical" overviews) natbib style.
%\bibliographystyle{amsplain}
%    Insert the bibliography data here.

\end{document}